\numberwithin{equation}{section}
\definecolor{darkgreen}{cmyk}{1,0,1,.2}
\definecolor{m}{rgb}{1,0.1,1}
\newdimen\theight
\def\TeXref#1{%
             \leavevmode\vadjust{\setbox0=\hbox{{\tt
                     \quad\quad  {\small \textrm #1}}}%
             \theight=\ht0
             \advance\theight by \lineskip
             \kern -\theight \vbox to
             \theight{\rightline{\rlap{\box0}}%
             \vss}%
            }}%
\def\moverlay{\mathpalette\mov@rlay}
\def\mov@rlay#1#2{\leavevmode\vtop{%
   \baselineskip\z@skip \lineskiplimit-\maxdimen
   \ialign{\hfil$\m@th#1##$\hfil\cr#2\crcr}}}
\newcommand{\charfusion}[3][\mathord]{
    #1{\ifx#1\mathop\vphantom{#2}\fi
        \mathpalette\mov@rlay{#2\cr#3}
     }
    \ifx#1\mathop\expandafter\displaylimits\fi}
\DeclareMathOperator{\Cl}{Cl}
\DeclareMathOperator{\Hom}{Hom}
\renewcommand{\epsilon}{\varepsilon}
\DeclareMathOperator{\dom}{dom}
\DeclareMathOperator{\im}{im}
\DeclareMathOperator{\id}{id} 
\DeclareMathOperator{\vol}{vol}
\DeclareMathOperator{\inj}{inj}
\DeclareMathOperator{\Iso}{Iso}
\DeclareMathOperator{\ev}{ev}
\DeclareMathOperator{\pr}{pr}
\DeclareMathOperator{\Aut}{Aut}
\DeclareMathOperator{\Homeo}{Homeo}
\newcommand{\TT}{\mathcal{T}}
\newcommand{\DD}{\mathcal{D}}
\newcommand{\FF}{\mathcal{F}}
\newcommand{\KK}{\mathcal{K}} 
\newcommand{\GG}{\mathcal{G}}
\newcommand{\CC}{\mathcal{C}}
\newcommand{\MM}{\mathcal{M}}
\newcommand{\RR}{\mathcal{R}}
\newcommand{\HH}{\mathcal{H}}
\newcommand{\VV}{\mathcal{V}}
\newcommand{\UU}{\mathcal{U}}
\newcommand{\II}{\mathcal{I}}
\newcommand{\SSS}{\mathcal{S}}
\newcommand{\Z}{\mathbb{Z}}
\newcommand{\R}{\mathbb{R}}
\newcommand{\N}{\mathbb{N}}
\newcommand{\fH}{\mathfrak{H}}
\newcommand{\fM}{\mathfrak{M}}
\newcommand{\fS}{\mathfrak{S}}
\newcommand{\fT}{\mathfrak{T}}
\newcommand{\fX}{\mathfrak{X}}
\newcommand{\ball}{B}
\newcommand{\sphere}{S}
\theoremstyle{plain}
\newtheorem{thm}{Theorem}[section]
\newtheorem{lem}[thm]{Lemma}
\newtheorem{cor}[thm]{Corollary}
\newtheorem{prop}[thm]{Proposition}
\theoremstyle{definition}
\newtheorem{defn}[thm]{Definition}
\newtheorem{ex}[thm]{Example}
\theoremstyle{remark}
\newtheorem{rem}[thm]{Remark}
\newtheorem{claim}[thm]{Claim}
\crefname{thm}{theorem}{theorems}
\crefname{lem}{lemma}{lemmas}
\crefname{cor}{corollary}{corollaries}
\crefname{prop}{proposition}{propositions}
\crefname{defn}{definition}{definitions}
\crefname{conj}{conjecture}{conjectures}
\crefname{ex}{example}{examples}
\crefname{exs}{examples}{examples}
\crefname{prob}{problem}{problems}
\crefname{quest}{question}{questions}
\crefname{rem}{remark}{remarks}
\crefname{rems}{remarks}{remarks}
\crefname{claim}{claim}{claims}
\crefname{case}{case}{cases}
\crefname{hyp}{hypothesis}{hypotheses}
\crefname{notation}{notation}{notations}
\title[Realization of manifolds as leaves]{Realization of manifolds as leaves using graph colorings}
\author[J.A. \'Alvarez L\'opez]{Jes\'us A. \'Alvarez L\'opez}
\address{Departamento de Xeometr\'{\i}a e Topolox\'{\i}a\\
         Facultade de Matem\'aticas\\
         Universidade de Santiago de Compostela\\
         Campus Vida\\
         15782 Santiago de Compostela\\
         Spain}
\email{jesus.alvarez@usc.es}
\author[R. Barral Lij\'o]{Ram\'on Barral Lij\'o}
\address{E.T.S.~Ingenieros Inform\'aticos\\
         Universidad Polit\'ecnica de Madrid\\
         28660 Madrid\\
         Espa\~na}
\email{ramon.barral@upm.es}
\subjclass[2020]{Primary: 57R30. Secondary: 05C15}
\keywords{foliated space, leaf, bounded geometry, limit aperiodic, repetitive}
\date{}
\begin{document}

\begin{abstract}  
It is proved that any (repetitive) Riemannian manifold of bounded geometry can be realized as a leaf of some (minimal) Riemannian matchbox manifold without holonomy. Our methods can be adapted to achieve Cantor transversals or a prescribed holonomy covering, but losing the density of our leaf.
\end{abstract} 

\maketitle


\section{Introduction}\label{s: intro}

The present paper relates to the study of which connected manifolds can be realized as leaves of foliations on compact manifolds, a question that 
Sondow \cite{Sondow1975} and Sullivan \cite{Sullivan1975} first posed  in the seventies. A manifold is called a \emph{leaf} or a \emph{non-leaf} depending on whether it can be realized or not. Typically, we restrict our attention to a particular class of foliations (say, of a given codimension or differentiablity class), and then the literature abounds with known results. The most prominent setting is perhaps codimension one, where Ghys \cite{Ghys1985}, Inaba \emph{et al.} \cite{InabaNishimoriTakamuraTsuchiya1985}, and Schweitzer and Souza \cite{SchweitzerSouza2013} have constructed non-leaves of dimension $3$ and higher; on the other hand, Cantwell and Conlon \cite{CantwellConlon1987} have shown that any open connected surface is a leaf. More recently, Meni\~no Cot\'on and Schweitzer \cite{MeninoSchweitzer2018} have pioneered the use of exotic differential structures to produce novel examples of non-leaves.

One can try  sprinkling some geometry on top of this question by noticing first that a leaf of a foliation on a compact Riemannian manifold is of bounded geometry; moreover, its quasi-isometry type is independent of the ambient Riemannian metric. We obtain thus a variation of the original realization problem: Which connected Riemannian manifolds of bounded geometry are quasi-isometric to leaves of foliations on compact Riemannian manifolds?  Again, results and techniques are plentiful,  including the works of Phillips and Sullivan \cite{PhillipsSullivan1981}, Januszkiewicz \cite{Januszkiewicz1984}, Cantwell and Conlon \cite{CantwellConlon1977,CantwellConlon1978,CantwellConlon1982}, Cass \cite{Cass1985}, Schweitzer \cite{Schweitzer1995,Schweitzer2011}, Attie and Hurder \cite{AttieHurder1996}, and Zeghib \cite{Zeghib1994}. 

Considering more general ambient spaces, one can also study which manifolds can be realized as leaves on compact Polish foliated spaces, where the differentiable structure and the Riemannian metric are avaliable only in the leafwise direction. Schweitzer and Souza \cite{SchweitzerSouza2017} have constructed connected Riemannian manifolds of bounded geometry that are not quasi-isometric to leaves in compact equicontinuous foliated spaces; Hurder and Lukina have used a coarse quasi-isometric invariant, the coarse entropy, to estimate the Hausdorff dimension of local transversals when applied to leaves of compact foliated spaces; and Lukina \cite{Lukina2016} has studied the Hausdorff dimension of local transversals in a foliated space. 

Leaves of compact foliated spaces with leafwise Riemannian metrics are always of bounded geometry. The authors have proved that, in this setting, the converse statement is also true: every connected Riemannian manifold of bounded geometry is isometric to a leaf without holonomy in a compact Riemannian foliated space (\cite[Theorem~1.1]{AlvarezBarral2017}, see also \cite[Theorem~1.5]{AlvarezBarralCandel2016} and~\cite{AlvarezBrumMartinezPotrie-minimal-lams-by-hyperbolic-surfaces}).

Our main contribution  is a strengthening of this last result.

\begin{thm}\label{t: realization in matchbox mfds w/t hol}
Any {\rm(}repetitive\/{\rm)} connected Riemannian manifold of bounded geometry is isometric to a leaf in a {\rm(}minimal\/{\rm)} Riemannian matchbox manifold without holonomy.
\end{thm}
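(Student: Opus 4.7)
The plan is to mimic the construction recalled from \cite{AlvarezBarral2017}, but to replace the smooth function $f:M\to\fH$ with a discrete decoration whose ambient moduli space already has zero-dimensional local transversals. Concretely, I would work in the Polish space $\widehat\CC\MM_*^n$ mentioned in \Cref{ss: intro: univ Riem fol sps}, whose points are equivalence classes $[M,x,D,c]$ of pointed complete Riemannian $n$-manifolds equipped with a closed subset $D\subset M$ and a locally constant coloring $c:D\to K$ with values in a finite palette $K$. The canonical partition makes $\widehat\CC\MM_*^n$ a Riemannian foliated space, and because the coloring is locally constant and the topology on the subset factor is the Chabauty/Fell topology, the local transversals are totally disconnected; hence closures of orbits are automatically \emph{matchbox} manifolds.

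First, exploiting bounded geometry, I would choose a uniformly discrete and relatively dense Delone subset $D\subset M$ and form its associated bounded-degree combinatorial graph (e.g.\ a Delaunay or Rips graph). Next, appealing to the graph-coloring machinery developed in \cite{AlvarezBarral-limit-aperiodic}, I would construct a finite coloring $c:D\to K$ that is \emph{limit aperiodic}, and simultaneously repetitive when $M$ is repetitive. Enough metric information must be encoded into $c$ (e.g.\ by including labels recording approximate distances and local injectivity radius data) so that any isometry of a limit $(M',D',c')$ in $\overline{[M,D,c]}$ preserving $(D',c')$ is forced to be the identity. With the colored data in place, the bounded-geometry compactness criterion (the $\widehat\CC\MM_*^n$-analogue of \Cref{p: overline [M f] is compact,p: overline [M f] subset widehat MM_* imm^infty(n)}) shows that $\XX:=\overline{[M,D,c]}$ is compact; being the orbit closure in a Riemannian foliated space with zero-dimensional transversals, it is a compact Riemannian matchbox manifold.

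To finish: the map $\hat\iota_{M,D,c}:M\to\XX$ is a local isometry and the holonomy cover of the leaf through $[M,D,c]$, and limit aperiodicity forces this cover to be a bijection, so $M$ is isometric to a leaf; the same argument applied to every $[M',x',D',c']\in\XX$ shows that $\XX$ has no holonomy. When $M$ is repetitive, the colored analogue of \Cref{p: M is repetitive <=> overline im hat iota_M f is minimal} yields that $\XX$ is minimal, giving the parenthetical statement. The main obstacle is the coloring step: one must construct a \emph{limit} aperiodic (and, when required, simultaneously repetitive) finite coloring on a bounded-geometry Delone set, and, crucially, verify that aperiodicity persists under Chabauty/Fell limits rather than only holding for $(M,D,c)$ itself — this is what rules out accidental symmetries in limit leaves and guarantees the holonomy-free conclusion. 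Once that combinatorial input is available, the matchbox, no-holonomy, and minimality statements follow from the general foliated-space formalism summarized in \Cref{ss: intro: univ Riem fol sps}.
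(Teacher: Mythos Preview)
Your plan has a genuine gap: the assertion that local transversals of $\widehat\CC\MM_*^n$ are zero-dimensional is false, and consequently $\overline{[M,D,c]}$ is \emph{not} automatically a matchbox manifold. The transversal direction records not only the colored Delone data but also the Riemannian metric, and both vary continuously. The Chabauty topology on separated subsets of a manifold is itself not zero-dimensional (a single point, or any finite configuration, moves along a continuum), and the $C^\infty$ topology on metrics of bounded geometry contributes another continuous factor. A finite coloring cannot rigidify these continuous degrees of freedom: it carries only finitely many bits per vertex, whereas determining the ambient metric or the exact Delone positions requires real parameters. So the inference ``Chabauty topology on a discrete set $+$ locally constant coloring $\Rightarrow$ totally disconnected transversals'' breaks down. (There is a secondary issue: the paper never equips $\widehat\CC\MM_*^n$ with a foliated structure---it is only treated as a partitioned Polish space---so even the claim that $\overline{[M,D,c]}$ is a Riemannian foliated space needs work, analogous to the role of the immersion condition in $\widehat\MM_{*,\text{imm}}^n$.)

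This is precisely why the paper proceeds in two steps rather than one. First (\Cref{t: mathfrak X}, via \Cref{p: there exists f}) it uses your colored-Delone input to manufacture a limit-aperiodic smooth immersion $f:M\to\fH$ into a finite-dimensional space, obtaining a compact Riemannian foliated space $\fX=\overline{[M,f]}\subset\widehat\MM_{*,\text{imm}}^n$ without holonomy---but with transversals that are typically not zero-dimensional. Then (\Cref{t: matchbox mfd}) it takes \emph{any} such $\fX$, encodes its holonomy pseudogroup orbits into a Cantor space $K^{\II}$ via a map $\Psi$, and augments $f$ by a second component $f_2$ built from $\Psi$; the key \Cref{cl: ev: overline fT' to fH is an embedding} shows that the evaluation map embeds the new complete transversal into a subset of $K^{\II}$, forcing it to be zero-dimensional. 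The crucial point you are missing is that zero-dimensionality is obtained only \emph{after} the continuous transversal of $\fX$ has already been fixed, by adding genuinely Cantor-valued data on top of it---not by the colored Delone set alone.
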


The improvement  over~\cite[Theorem~1.1]{AlvarezBarral2017} is twofold:  we manage to trivialize the holonomy group of every leaf and we realize our leaf in a matchbox manifold (a foliated space with totally disconnected transversals). This last improvement is an extension of a result by Anderson \cite[Theorem~IIIB]{Anderson}, stating that any continuous flow on a compactum can be raised to a continuous flow on a 1-dimensional compactum.\footnote{We thank anonymous referees for pointing out this connection.} Our interest in minimal matchbox manifolds without holonomy stems from the work of Clark, Hurder and Lukina~(\cite{ClarkHurderLukina2014}, see also~\cite{AlcaldeLozanoMacho2011}), where they prove that these are inverse limits of compact branched manifolds. This description was purportedly generalized to arbitrary matchbox manifolds in \cite{LozanoRojo2013}, but it has since been acknowledged that the proof is not correct. 

For example, \Cref{t: realization in matchbox mfds w/t hol} can be applied to any complete connected hyperbolic manifold with  positive injectivity radius, or to any connected Lie group with a left invariant metric. Some of them are not coarsely quasi-isometric to any finitely generated group \cite{ChaluleauPittet2001,EskinFisherWhyte2012}, yielding compact, minimal, Riemannian matchbox manifolds without holonomy whose leaves are isometric to each other, but  not coarsely quasi-isometric to any finitely generated group.

Since any smooth $C^\infty$ manifold admits a metric of bounded geometry \cite{Greene1978}, \Cref{t: realization in matchbox mfds w/t hol} implies that any $C^\infty$ connected manifold can be realized as a leaf of a $C^\infty$ matchbox manifold without holonomy. This includes, for instance, the exotic non-leaves in codimension one constucted in~\cite{MeninoSchweitzer2018}.

In the following consequences of \Cref{t: realization in matchbox mfds w/t hol}, the realization of a Riemannian manifold as a leaf is achieved with some additional properties, but losing the density of that leaf.

\begin{thm}\label{t: realization in matchbox mfds w/t hol with a Cantor transversal}
Any non-compact connected Riemannian manifold of bounded geometry is isometric to a leaf in some Riemannian matchbox manifold without holonomy with a complete transversal homeomorphic to a Cantor space.
\end{thm}

Note that, since minimal matchbox manifolds have complete Cantor transversals, \Cref{t: realization in matchbox mfds w/t hol with a Cantor transversal} is a direct consequence of \Cref{t: realization in matchbox mfds w/t hol} if the manifold is repetitive, but not in the general case (Section~\ref{s: attaching flat bundles}). We also remark that, when a matchbox manifold is not minimal, the transversal models may contain isolated points and not be homeomorphic to the Cantor set.

\begin{thm}\label{t: realization with holonomy}
Let $M$ be a connected Riemannian manifold of bounded geometry, and let $\widetilde M$ be a regular covering of $M$. Then $M$ is isometric to a leaf with holonomy covering $\widetilde M$ in a compact Riemannian matchbox manifold.
\end{thm}

Describing the pairs $(M,\widetilde M)$ that satisfy the statement of Theorem~\ref{t: realization with holonomy} with a minimal compact foliated space seems less feasible. In this regard, Cass \cite{Cass1985} has given a quasi-isometric property satisfied by the leaves of compact minimal foliated spaces with no restriction on the holonomy.

The proof of \Cref{t: realization in matchbox mfds w/t hol} proceeds in two steps. We begin in \Cref{t: mathfrak X} by realizing $M$ as a dense leaf of a (minimal) compact Riemannian foliated space $\fX$ without holonomy. This step  mirrors the techniques used in~\cite[Theorem~1.1]{AlvarezBarral2017}, where we realize manifolds in a universal space $\widehat{\MM}_*^n$ consisting of triples $[M,x,f]$, where $M$ is a connected Riemannian $n$-manifold, $x\in M$ and $f\colon M\to\fH$ is a smooth function taking values in a separable Hilbert space. The idea is, given a candidate manifold $M$, to find a suitable $f$ so that the closure of $\{[M,x,f]\mid x\in M\}$ is the desired foliated space containing $M$ as a leaf. In the construction of $f$ (\Cref{p: there exists f}), an important role is played by a Delone subset $X\subset M$, which becomes a (repetitive) connected graph of finite degree by attaching an edge between any pair of close enough points. Then $f$ is defined using normal coordinates at the points of $X$, and a (repetitive) limit aperiodic coloring $\phi$ of $X$ by finitely many colors. The existence of $\phi$ is guaranteed by \cite[Theorem~1.4]{AlvarezBarral-limit-aperiodic}.

The second step of the proof constructs a (minimal) matchbox manifold $\fM$ without holonomy and a foliated projection $\pi:\fM\to\fX$ whose restrictions to the leaves are diffeomorphisms (\Cref{t: matchbox mfd}). Then $\fX$ can be replaced with $\fM$ by considering the lift of the Riemannian metric of $\fX$ to $\fM$. To obtain $\fM$, we introduce a totally disconnected extension of the transversal dynamical system associated to $\fM$, a technique that is most commonly used in the context of actions of countable groups on compact spaces (see~\cite{Anderson, DownarowiczZhang} and the references therein).  This idea is implemented by using again the space $\widehat{\mathcal M}_*^n$. 
 
The proofs of \Cref{t: realization in matchbox mfds w/t hol with a Cantor transversal,t: realization with holonomy} use the following common procedure: Let $E\to M$ be a Polish flat bundle with non-compact, locally compact fibers. Realize the manifold $M$ in $\fM$ as above, and then glue $E$ to $\fM$, obtaining a new compact foliated space $\fM'$  (\Cref{s: attaching flat bundles}); choosing $E$ appropriately in each case, $\fM'$ satisfies the property stated in the corresponding corollary.

\section{Preliminaries}\label{s: prelim}

\subsection{Partitioned spaces}\label{ss: partitioned sps}

If $X$ is a topological space equipped with an equivalence relation $\RR$, then we call $(X,\RR)$ a \emph{partitioned space}.

\begin{lem}\label{l: equiv rels on top sps}
If the saturation of any open subset of $X$ is open, then the closure of any saturated subset of $X$ is saturated.
\end{lem}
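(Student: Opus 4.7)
The plan is to give a short direct contrapositive-style argument, using the hypothesis to convert a putative separating open neighborhood at one end of an equivalence pair into an open neighborhood at the other end.

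First I would take a saturated subset $A\subset X$, a point $x\in\overline A$, and any $y\in X$ with $x\,\RR\,y$; the goal is to show $y\in\overline A$. I would argue by contradiction: suppose $y\notin\overline A$. Then there exists an open neighborhood $V$ of $y$ with $V\cap A=\emptyset$. By hypothesis, the saturation $\Sat(V)$ (the union of all $\RR$-classes meeting $V$) is open, and it contains $x$ because $x\,\RR\,y\in V$. Thus $\Sat(V)$ is an open neighborhood of $x$.

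Since $x\in\overline A$, this neighborhood must meet $A$, so pick $a\in A\cap\Sat(V)$. By definition of saturation, $a\,\RR\,v$ for some $v\in V$. Since $A$ is saturated, $v\in A$, contradicting $V\cap A=\emptyset$. Therefore $y\in\overline A$, and $\overline A$ is saturated.

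There is no real obstacle here; the only content is the standard observation that saturation commutes with the equivalence in the sense that $x\in\Sat(V)$ whenever $x$ is $\RR$-equivalent to some element of $V$. The hypothesis is used exactly once, to guarantee that $\Sat(V)$ is an open neighborhood of $x$, which is what lets one translate the nonclosure witness at $y$ into one at $x$. (Equivalently, one could phrase the proof on complements: the complement $B=X\setminus A$ is saturated, so it suffices to show $\operatorname{int}(B)$ is saturated; but $\Sat(\operatorname{int}B)$ is open by hypothesis and contained in $\Sat(B)=B$, hence contained in $\operatorname{int}(B)$, giving equality. I would prefer the direct version above as it is more transparent.)
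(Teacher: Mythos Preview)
Your proof is correct and follows essentially the same argument as the paper's own proof: take $x\in\overline A$ and $y\in\RR(x)$, note that for any open neighborhood $U$ of $y$ the saturation $\RR(U)$ is an open neighborhood of $x$, hence meets $A$, and use saturation of $A$ to conclude $U\cap A\ne\emptyset$. The only cosmetic difference is that the paper argues directly (showing every neighborhood of $y$ meets $A$) while you phrase it as a contradiction from a single separating neighborhood; the content is identical.
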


\begin{proof}
For any saturated $A\subset X$, let $x\in\overline A$ and $y\in\RR(x)$. For every open neighborhood $U$ of $y$, its saturation $\RR(U)$ is an open neighborhood of $x$, and therefore $\RR(U)\cap A\ne\emptyset$. Since $A$ is saturated, it follows that $U\cap A\ne\emptyset$. This shows that $y\in\overline A$, and therefore $\overline A$ is saturated.
\end{proof}

The properties indicated in \Cref{l: equiv rels on top sps} are well known for the equivalence relations defined by continuous group actions or foliated structures.

Like in the case of group actions or foliations, a \emph{minimal set} $A$ in $X$ is a non-empty closed saturated subset that is minimal among the sets with these properties. Minimality is achieved just when every equivalence class in $A$ is dense in $A$.

Given another partitioned space $(Y,\SSS)$, a map $f:X\to Y$ is said to be \emph{relation-preserving} if $f(\RR(x))\subset\SSS(f(x))$ for all $x\in X$. The notation $f:(X,\RR)\to(Y,\SSS)$ is used in this case.

\subsection{Metric spaces}\label{ss: metric sps}

Let $X$ be a metric space. For $x\in X$ and $r\in\mathbb{R}$, let $\sphere(x,r)=\{\,y\in X\mid d(x,y)=r\,\}$, 
$\ball (x,r)=\{\,y\in X\mid d(x,y)< r\,\}$ and $D (x,r)=\{\,y\in X\mid d(x,y)\leq r\,\}$ (the sphere, and the open and closed balls of center $x$ and radius $r$). 
For $x\in X$ and $0\leq r\leq s$, let $C(x,r,s)=B(x,s)\setminus D(x,r)$ (The \emph{open corona} of inner radius $r$ and outer radius $s$).
 We may add $X$ as a subindex to all of this notation if necessary. Consider a subset $Q\subset X$. It is said that $Q$ is ($K$-) {\em separated\/} if there is some $K>0$ such that $d(x,y)\ge K$ for all $x\ne y$ in $Q$. On the other hand, $Q$ is said to be ($C$-) {\em relatively dense\/}\footnote{A {\em $C$-net\/} is similarly defined with the penumbra. If reference to $C$ is omitted, both concepts are equivalent.} in $X$ if there is some $C>0$ such that $\bigcup_{q\in Q} D(q,C)=X$. A separated relatively dense subset is called a \emph{Delone} subset. 
 
\begin{lem}\label{l: union of an increasing sequence of separated sets}
 If $Q=\bigcup_{n=0}^\infty Q_n$, where $Q_0\subset Q_1\subset\cdots$ and every $Q_n$ is $K$-separated, then $Q$ is $K$-separated. 
 \end{lem}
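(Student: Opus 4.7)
The plan is to observe that this is essentially immediate from the fact that every finite collection of elements in a nested union sits inside a single member of the sequence. First I would take two arbitrary distinct points $x, y \in X$. Since $X = \bigcup_{n=0}^\infty Q_n$, there exist indices $n_x, n_y$ with $x \in Q_{n_x}$ and $y \in Q_{n_y}$. Setting $n = \max\{n_x, n_y\}$ and using the monotonicity $Q_0 \subset Q_1 \subset \cdots$, both points lie in $Q_n$.

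Then I would invoke the hypothesis that $Q_n$ is $K$-separated to conclude $d(x, y) \ge K$. Since $x, y$ were arbitrary distinct points of $X$, this gives $K$-separation of $X$. No step presents any obstacle here; the only subtlety worth naming explicitly is that the hypothesis of an increasing chain is essential, because otherwise two points could belong only to different $Q_n$'s with no common container, and nothing in the hypotheses would then control their distance.
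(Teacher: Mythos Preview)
Your proof is correct and follows exactly the same approach as the paper: pick two distinct points, use the nesting to place them in a common $Q_n$, and apply $K$-separation of $Q_n$. The paper's version is simply more terse.
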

 
 \begin{proof}
 Given $x\ne y$ in $Q$, we have $x,y\in Q_n$ for some $n$, and therefore $d(x,y)\ge K$.
 \end{proof}
 
 \begin{lem}[\'Alvarez-Candel {\cite[Proof of Lemma~2.1]{AlvarezCandel2011}}]\label{l: maximal}
 A maximal $K$-separated subset of $X$ is $K$-relatively dense. 
 \end{lem}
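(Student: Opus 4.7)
The plan is to argue by contradiction. Let $Q\subset X$ be a maximal $K$-separated subset, and suppose for the sake of contradiction that $Q$ is not $K$-relatively dense; by definition this means $\CPen(Q,K)\ne X$, so there exists some point $x\in X$ with $d(x,Q)>K$.

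First I would observe that such an $x$ cannot belong to $Q$, since $d(x,Q)=0$ would contradict $d(x,Q)>K>0$. Next, for every $y\in Q$, the inequality $d(x,y)\ge d(x,Q)>K$ holds, so in particular $d(x,y)\ge K$. Combined with the fact that $Q$ itself is $K$-separated, this shows that the set $Q\cup\{x\}$ satisfies $d(u,v)\ge K$ for every pair of distinct points $u,v\in Q\cup\{x\}$. Hence $Q\cup\{x\}$ is $K$-separated and strictly contains $Q$, contradicting the maximality of $Q$.

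Therefore no such $x$ exists, $\CPen(Q,K)=X$, and $Q$ is $K$-relatively dense. There is no real obstacle here: the argument is a direct enlarge-the-set-if-possible contradiction, and the only minor point worth flagging is the distinction between the strict inequality $d(x,Q)>K$ (which arises from the failure of $\CPen(Q,K)=X$ under the non-strict convention for the closed penumbra adopted in the paper) and the non-strict inequality $d(x,y)\ge K$ needed in the definition of $K$-separatedness; but the former trivially implies the latter.
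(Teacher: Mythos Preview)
Your argument is correct and is the standard proof of this elementary fact. Note that the paper does not actually give its own proof of this lemma; it merely cites \cite[Proof of Lemma~2.1]{AlvarezCandel2011}, so there is nothing to compare against beyond confirming that your contradiction argument is the expected one.
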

 
\Cref{l: maximal} has the following easy consequence using Zorn's lemma.

\begin{cor}[Cf.\ {\cite[Lemma~2.3 and Remark~2.4]{AlvarezCandel2018}}]\label{c: existence of a separated net}
Any $K$-separated subset of $X$ is contained in some maximal $K$-separated $K$-relatively dense subset.
\end{cor}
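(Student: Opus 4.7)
The plan is a direct application of Zorn's lemma followed by \Cref{l: maximal}. Let $Q_0\subset X$ be a given $K$-separated subset. Consider the family
\[
\FF=\{\,Q\subset X\mid Q\supset Q_0\text{ and }Q\text{ is }K\text{-separated}\,\},
\]
partially ordered by inclusion. Since $Q_0\in\FF$, this family is non-empty.

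Next I would verify the hypothesis of Zorn's lemma: every chain $\CC\subset\FF$ has an upper bound in $\FF$. The natural candidate is $Q_*=\bigcup_{Q\in\CC}Q$, which clearly contains $Q_0$. To see that $Q_*$ is $K$-separated, take any $x\ne y$ in $Q_*$; then $x\in Q$ and $y\in Q'$ for some $Q,Q'\in\CC$, and by totality of $\CC$ we may assume $Q\subset Q'$, so that $x,y\in Q'$ and hence $d(x,y)\ge K$. (This is the chain analogue of \Cref{l: union of an increasing sequence of separated sets}; the argument is essentially the same but does not require countability.) Thus $Q_*\in\FF$ and it is an upper bound for $\CC$.

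By Zorn's lemma, $\FF$ has a maximal element $P$. I claim $P$ is in fact maximal among all $K$-separated subsets of $X$, not merely among those containing $Q_0$. Indeed, if $P\subsetneq P'$ with $P'\subset X$ being $K$-separated, then $P'\supset P\supset Q_0$, so $P'\in\FF$, contradicting the maximality of $P$ in $\FF$. Finally, \Cref{l: maximal} applied to $P$ shows that $P$ is $K$-relatively dense in $X$, which completes the proof.

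No real obstacle is expected; the only subtlety is noting that \Cref{l: union of an increasing sequence of separated sets} as stated handles countable increasing unions, whereas here an arbitrary chain appears, so one needs the one-line observation above rather than a direct appeal to that lemma.
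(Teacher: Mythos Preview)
Your proof is correct and matches the paper's approach exactly: the paper simply states that the corollary is an easy consequence of \Cref{l: maximal} using Zorn's lemma, and you have spelled out precisely that argument. Your remark that the chain union requires the obvious generalization of \Cref{l: union of an increasing sequence of separated sets} rather than a literal citation is a fair observation, but this is a routine detail the paper did not bother to mention.
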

 
Recall that $X$ is said to be \emph{proper} is its bounded sets are relatively compact; i.e., the map $d(x,\cdot):X\to[0,\infty)$ is proper for any $x\in X$. 

\begin{defn}\label{d:perturbation}
	For $A\subset X$ and $\epsilon >0$, a subset $B\subset X$ is called an \emph{$\epsilon$-perturbation} of $A$ if there is a bijection $h\colon A \to B$ such that $d(x,h(x))\leq \epsilon$ for every $x\in A$.
\end{defn}

The following result is an elementary consequence of the triangle inequality.

\begin{lem}\label{l:perturbationnet}
	Let $A\subset X$ and let $B\subset X$ be an $\epsilon$-perturbation of $A$.
	If $A$ is $\eta$-relatively dense in $X$ for $\eta>0$, then $B$ is $(\eta+\epsilon)$-relatively dense in $X$. If $A$ is $\tau$-separated for $\tau>2\epsilon$, then $B$ is $(\tau-2\epsilon)$-separated.
\end{lem}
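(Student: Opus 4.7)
The plan is to unpack the definition of $\epsilon$-perturbation and apply the triangle inequality twice, once for each claim. Let $h\colon A\to B$ be the bijection witnessing the perturbation, so $d(a,h(a))\le\epsilon$ for every $a\in A$.

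For the relative density claim, I would start with an arbitrary point $y\in X$. By the $\eta$-relative density of $A$, there is some $a\in A$ with $d(y,a)\le\eta$. The natural candidate for a nearby point of $B$ is $b=h(a)$, and the triangle inequality gives
\[
d(y,b)\le d(y,a)+d(a,h(a))\le \eta+\epsilon,
\]
so $\CPen(B,\eta+\epsilon)=X$.

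For the separation claim, I would take two distinct points $b_1,b_2\in B$ and write $b_i=h(a_i)$ with $a_1,a_2\in A$ distinct (here I use that $h$ is a bijection). The $\tau$-separation of $A$ gives $d(a_1,a_2)\ge\tau$, and the triangle inequality now runs in the opposite direction:
\[
d(b_1,b_2)\ge d(a_1,a_2)-d(a_1,b_1)-d(a_2,b_2)\ge\tau-2\epsilon,
\]
which is positive by the hypothesis $\tau>2\epsilon$. This gives the desired $(\tau-2\epsilon)$-separation.

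There is no real obstacle here; the only subtlety worth flagging is that the bijectivity of $h$ is essential for the separation claim (to conclude that distinct points of $B$ come from distinct points of $A$), whereas surjectivity alone is not used for the density claim.
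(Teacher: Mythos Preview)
Your proof is correct and matches the paper's approach exactly: the paper simply declares the lemma ``an elementary consequence of the triangle inequality'' without spelling out the details, and your two applications of the triangle inequality are precisely what is intended.
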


\subsection{Riemannian manifolds}\label{ss: Riem mfds}

Let $M$ be a connected complete Riemannian $n$-manifold, $g$ its metric tensor, $d$ its distance function, $\nabla$ its Levi-Civita connection, $R$ its curvature tensor,   $\inj(x)$ its injectivity radius at  $x\in M$, and $\inj=\inf_{x\in M}\inj(x)$ (its injectivity radius).  If necessary, we may add ``$M$'' as a subindex or superindex to this notation, or the subindex or superindex ``$i$''  when a family of Riemannian manifolds $M_i$ is considered. Since $M$ is complete, it is proper as metric space.

Let $T^{(0)}M=M$, and $T^{(m)}M=TT^{(m-1)}M$ for $m\in\Z^+$. If $l<j$, then $T^{(l)}M$ is sometimes identified with a regular submanifold of $T^{(m)}M$ via zero sections. Any $C^m$ map between Riemannian manifolds, $h:M\to M'$, induces a map $h_*^{(m)}:T^{(m)}M\to T^{(m)}M'$ defined by $h_*^{(0)}=h$ and $h_*^{(m)}=(h^{(m-1)}_*)_*$ for $m\in\Z^+$.

The Levi-Civita connection determines a decomposition $T^{(2)}M=\HH\oplus\VV$, as direct sum of the horizontal and vertical subbundles. Consider the \emph{Sasaki metric} $g^{(1)}$ on $TM$, which is the unique Riemannian metric such that $\HH\perp\VV$ and the canonical identities $\HH_\xi\equiv T_\xi M \equiv \VV_\xi$ are isometries for every $\xi\in TM$. For $m\ge2$, consider the \emph{Sasaki metric} $g^{(m)}=(g^{(m-1)})^{(1)}$ on $T^{(m)}M$. The notation $d^{(m)}$ is used for the corresponding distance function, and the corresponding open and closed balls of center $v\in T^{(m)}M$ and radius $r>0$ are denoted by $B^{(m)}(v,r)$ and $D^{(m)}(v,r)$. For $l<j$, $T^{(l)}M$ is totally geodesic in $T^{(m)}M$ and $g^{(m)}|_{T^{(l)}M}=g^{(l)}$.

Let $D\subset M$ be a compact domain\footnote{A regular submanifold of the same dimension as $M$, possibly with boundary.} and $m\in\N$. The $C^m$ tensors on $D$ of a fixed type form a Banach
space with the norm $\|\ \|_{C^m,D,g}$ defined by
\[
\|A\|_{C^m,D,g}=\max_{0\le l\le m,\ x\in D}|\nabla^lA(x)|\;.
\]
By taking the projective limit as $m\to\infty$, we get the Fr\'echet space of $C^\infty$ tensors on $D$ of that type equipped with the
\emph{$C^\infty$ topology} (see e.g.\ \cite{Hirsch1976}). Similar definitions apply to the space of $C^m$ or $C^\infty$ functions on $M$ with values in a separable Hilbert space (of finite or infinite dimension).

Recall that a $C^1$ map between Riemannian manifolds, $h\colon M \to M'$, is called a ($\lambda$-) \emph{quasi-isometry} if there is some $\lambda \geq 1$ such that $\lambda^{-1}\,|v| \le |h_*(v)| \leq \lambda \,|v|$ for all $v\in TM$.

Recall also that a \emph{partial map} $h$ of $M$ to $M'$ is a map from a subset of $M$ to $M'$; it is denoted by $h:M\rightarrowtail M'$, and its domain and image are denoted by $\dom h$ and $\im h$. For $m\in\N$, a partial map $h:M\rightarrowtail M'$ is called a \emph{$C^m$ local diffeomorphism} if $\dom h$ and $\im h$ are open in $M$ and $M'$, respectively, and $h:\dom h\to\im h$ is a $C^m$ diffeomorphism. If moreover $h(x)=x'$ for distinguished points, $x\in\dom h$ and $x'\in\im h$, then $h$ is said to be \emph{pointed}, and the notation $h:(M,x)\rightarrowtail(M',x')$ is used. The term (\emph{pointed}) \emph{local homeomorphism} is used in the $C^0$ case.

For $m\in\N$, $R>0$ and $\lambda\ge1$, an \emph{$(m,R,\lambda)$-pointed partial quasi-isometry}\footnote{The extension $\tilde h$ is an $(m,R,\lambda)$-pointed local quasi-isometry, as defined in \cite{AlvarezBarral2017}. On the other hand, any $(m,R,\lambda)$-pointed local quasi-isometry defines an $(m,R,\lambda)$-pointed partial quasi-isometry by restriction. Thus both notions are equivalent.}  (or simply an \emph{$(m,R,\lambda)$-p.p.q.i.}) is a pointed partial map $h \colon (M,x) \rightarrowtail (M',x')$, with $\dom h=D(x,R)$, which can be extended to a $C^{m+1}$-diffeomorphism $\tilde h$ between open subsets such that $D_M^{(m)}(x,R)\subset\dom\tilde h_*^{(m)}$ and $\tilde h_*^{(m)}$ is a $\lambda$-quasi-isometry of some neighborhood of $D_M^{(m)}(x,R)$ in $T^{(m)}M$ to $T^{(m)}M'$. The following result has an elementary proof.

\begin{prop}\label{p: qi composition}
Let $h\colon (M,x)\rightarrowtail (M,y)$ be an $(m,R,\lambda)$-p.p.q.i.\ and $h'\colon (M,x)\rightarrowtail (M,y')$ an $(m',R',\lambda')$-p.p.q.i. Then $h^{-1}\colon (M,y)\rightarrowtail (M,x)$  is an $(m,\lambda^{-1}R,\lambda)$-p.p.q.i. If $m'\geq m$ and $R \lambda +d(x,y)\leq  R'$, then $h' h\colon (M,x)\rightarrowtail (M,h'(y))$  is an $(m,R,\lambda\lambda')$-p.p.q.i.
\end{prop}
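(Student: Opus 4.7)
The statement has two independent assertions; both amount to unwinding the definition of a p.p.q.i. given before \Cref{p: qi composition} and checking that the constants $(m,R,\lambda)$ work out. The only non-formal ingredients are (i) completeness of $M$ and of the Sasaki metrics $g^{(m)}$, which lets one lift paths and deduce length/distance estimates from the pointwise quasi-isometry bound, and (ii) the inclusions $T^{(l)}M\subset T^{(m)}M$ for $l\le m$ together with $g^{(m)}|_{T^{(l)}M}=g^{(l)}$, which lets one pass freely between balls at different tangent levels.

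\textbf{Inverse statement.} Let $\tilde h$ be the promised $C^{m+1}$-diffeomorphism between open sets extending $h$, with $\tilde h^{(m)}_*$ a $\lambda$-quasi-isometry on an open neighborhood $U$ of $D_M^{(m)}(x,R)$. The pointwise condition $\lambda^{-1}|v|\le|\tilde h_*v|\le\lambda|v|$ is symmetric under passing to the inverse differential, so $(\tilde h^{-1})^{(m)}_*$ is itself a $\lambda$-quasi-isometry on the open set $\tilde h^{(m)}_*(U)$, which contains $\tilde h^{(m)}_*(D_M^{(m)}(x,R))$. The core point is to show
\[
D_M^{(m)}(y,\lambda^{-1}R)\subset \tilde h^{(m)}_*(U).
\]
I would prove this by a standard path-lifting argument: given $\xi\in D_M^{(m)}(y,\lambda^{-1}R)$, choose a minimizing $g^{(m)}$-geodesic $\sigma$ from $y$ to $\xi$ (this exists since $g^{(m)}$ is complete), and lift it through $(\tilde h^{(m)}_*)^{-1}$ starting from $x$. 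The maximal lift $\gamma$ has $g^{(m)}$-length at most $\lambda\cdot d^{(m)}(y,\xi)\le R$, so $\gamma$ stays inside the compact ball $D_M^{(m)}(x,R)\subset U$; a local-diffeomorphism argument at the endpoint rules out blow-up before reaching $\xi$, giving $\xi=\tilde h^{(m)}_*(\gamma(1))$. Projecting to the base ($m=0$ level) shows moreover that $h^{-1}$ is a well-defined pointed partial map with $\dom h^{-1}=D(y,\lambda^{-1}R)$ and $h^{-1}(y)=x$. This is exactly the defining data for an $(m,\lambda^{-1}R,\lambda)$-p.p.q.i.

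\textbf{Composition statement.} Let $\tilde h$ and $\tilde h'$ be the respective $C^{m+1}$ and $C^{m'+1}$ diffeomorphic extensions, with $\tilde h^{(m)}_*$ and $\tilde h'^{(m')}_*$ quasi-isometric on open neighborhoods $U$ and $U'$ of $D_M^{(m)}(x,R)$ and $D_M^{(m')}(x,R')$ respectively. First, for any $z\in D(x,R)$ the triangle inequality gives $d(x,h(z))\le d(x,y)+d(y,h(z))\le d(x,y)+\lambda R\le R'$, so $h(z)\in\dom h'$ and $h'\circ h$ is defined on all of $D(x,R)$, with $(h'\circ h)(x)=h'(y)$. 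Next, since $g^{(m)}$ is complete, any $\xi\in D_M^{(m)}(x,R)$ is joined to $x$ by a minimizing geodesic that remains in $D_M^{(m)}(x,R)\subset U$; integrating the pointwise bound along it yields $d^{(m)}(y,\tilde h^{(m)}_*\xi)\le\lambda d^{(m)}(x,\xi)\le\lambda R$. Using the triangle inequality and the inclusion $T^{(m)}M\subset T^{(m')}M$ (where $m\le m'$ by hypothesis),
\[
\tilde h^{(m)}_*\bigl(D_M^{(m)}(x,R)\bigr)\subset D_M^{(m')}(x,\lambda R+d(x,y))\subset D_M^{(m')}(x,R').
\]
Since $D_M^{(m)}(x,R)$ is compact, continuity and compactness upgrade this to an open neighborhood $V\subset U$ of $D_M^{(m)}(x,R)$ with $\tilde h^{(m)}_*(V)\subset U'$. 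On $V$ the composition $\tilde h'^{(m)}_*\circ \tilde h^{(m)}_*$ is then, pointwise, a $\lambda\lambda'$-quasi-isometry, and the extension $\tilde h'\circ\tilde h$ is of class $C^{m+1}$ because $m'+1\ge m+1$. This verifies all the clauses needed for $h'h$ to be an $(m,R,\lambda\lambda')$-p.p.q.i.

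\textbf{Main obstacle.} The routine work is entirely the triangle-inequality bookkeeping with the constants $R,\lambda,R',\lambda',d(x,y)$. The only genuinely non-bookkeeping step is the path-lifting in the inverse statement, which must be done at the $T^{(m)}M$-level rather than just the base; this is where completeness of the Sasaki metric and the fact that a minimizing geodesic from the center of a Riemannian ball stays inside that ball are essential, and it is the one place where careful writing is needed.
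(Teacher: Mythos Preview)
Your proof is correct and follows the only natural route: unwind the definition and do the triangle-inequality bookkeeping, with the one substantive step being the open-closed/path-lifting argument at the $T^{(m)}M$-level to show $D_M^{(m)}(y,\lambda^{-1}R)\subset\tilde h_*^{(m)}(U)$. The paper does not actually write out a proof at all---it simply declares the result ``has an elementary proof''---so your write-up supplies exactly the details the authors elide; in particular your use of completeness of the Sasaki metrics (to guarantee existence of minimizing geodesics and compactness of the closed balls $D_M^{(m)}(x,R)$) and of the totally geodesic inclusions $T^{(l)}M\subset T^{(m)}M$ are the right tools, and your constant-tracking in the composition part is accurate.
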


In the following two results, $E$ is a (real) Hilbert bundle over $M$, equipped with an orthogonal connection $\nabla$. Let $C^m(M;E)$ denote the space of its $C^m$ sections ($m\in\N\cup\{\infty\}$), and $E_x$ its fiber over any $x\in M$.

\begin{prop}[{Cf.\ \cite[Proposition~3.11]{AlvarezBarralCandel2016}}]\label{p: SSS is compact}
Let $\SSS\subset C^\infty(M;E)$. Then $\SSS$ is precompact in $C^\infty(M;E)$ if and only if:
\begin{enumerate}[{\rm(i)}]

\item\label{i: | nabla^k s |} $\sup_{s\in\SSS}\sup_D|\nabla^ks|<\infty$ for every compact subset $D\subset M$ and $k\in\N$; and

\item\label{i: (nabla^k s)(x)} $\{\,(\nabla^ks)(x)\mid s\in\SSS\,\}$ is precompact in\footnote{$E_x\otimes\bigotimes_kT^*_xM\equiv\Hom(\bigotimes_kT_xM,E_x)$ is endowed with the topology of uniform convergence over bounded subsets, induced by the operator norm. It agrees with the topology of pointwise convergence because $\dim\bigotimes_kT_xM<\infty$.} $E_x\otimes\bigotimes_kT^*_{x_0}M$ for all $x\in M$ and $k\in\N$.

\end{enumerate}
\end{prop}

\begin{proof}
By definition of the topology of $C^\infty(M;E)$, the map
\[
(\nabla^k)_{k\in\N}:C^\infty(M;E)\to\prod_{k=0}^\infty C\Big(M;E\otimes\bigotimes_kT^*M\Big)
\]
is a topological embedding, so we need to show that every $\nabla^k(\SSS)$ is precompact in $C(M;E\otimes\bigotimes_kT^*M)$ if and only if~\ref{i: | nabla^k s |} and~\ref{i: (nabla^k s)(x)} are true. This equivalence is given by the version of the Arzel\`a-Ascoli theorem given in \cite[Chap.~X, Sec.~5, Corollary~3]{Bourbaki-Topology-5-10}.
\end{proof}

Recall that $M$ is said to be of \emph{bounded geometry} if $\inj_M>0$ and $\sup_M|\nabla^mR_M|<\infty$ for all $m\in\N$. For a given manifold $M$ of bounded geometry, the optimal bounds of the previous inequalities will be referred to as the \emph{geometric bounds} of $M$. Let $B_r=B_{\R^n}(0,r)$ ($r>0$). 

\begin{prop}[See {\cite[Theorem~A.1]{Schick1996}, \cite[Theorem 2.5]{Schick2001}, \cite[Proposition~2.4]{Roe1988I}, \cite{Eichhorn1991}}]
\label{p: g_ij}
$M$ is of bounded geometry if and only if there is some $0<r_0<\inj_M$ such that, for normal parametrizations $\kappa_x:B_{r_0}\to B_M(x,r_0)$ {\rm(}$x\in M${\rm)}, the corresponding metric coefficients, $g_{ij}$ and $g^{ij}$, as a family of $C^\infty$ functions on $B_{r_0}$ parametrized by $x$, $i$ and $j$, lie in a bounded subset of the Fr\'echet space $C^\infty(B_{r_0})$.
\end{prop}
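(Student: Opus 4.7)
The plan is to prove both implications by analyzing the metric tensor in normal coordinates via the Jacobi equation, together with the algebraic expression of the curvature in terms of the metric.

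For the ``only if'' direction, assume $M$ is of bounded geometry with $C_m:=\sup_M|\nabla^mR_M|<\infty$ for all $m\in\N$. Fix any $0<r_0<\inj_M$, and for each $x\in M$ consider $\kappa_x\colon B_{r_0}\to B_M(x,r_0)$. Along each radial geodesic $\gamma_v(t)=\kappa_x(tv)$ with $v\in S^{n-1}$ and $0\le t<r_0$, the coordinate pushforwards $(\kappa_x)_*(\partial_i)$ restricted to $\gamma_v$ are Jacobi fields $J_i$ with $J_i(0)=0$ and $\nabla_tJ_i(0)=e_i$ in a parallel-transported orthonormal frame; they satisfy $\nabla_t^2J_i+R(J_i,\gamma_v')\gamma_v'=0$. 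Writing $g_{ij}\circ\kappa_x(tv)=g(J_i(t),J_j(t))$ and differentiating both in $t$ and in the angular variables of $v$, one obtains inhomogeneous linear ODEs whose coefficients and forcing terms involve $\nabla^kR$ for $k\le m$ along $\gamma_v$. Solving these uniformly on the fixed interval $[0,r_0]$ and using the bounds $C_0,\dots,C_m$ yields an estimate $\|g_{ij}\|_{C^m(B_{r_0})}\le F_m(C_0,\dots,C_m)$ independent of $x$. Since $r_0<\inj_M$ there are no conjugate points in $B_M(x,r_0)$, so by Rauch comparison $\det(g_{ij})$ is bounded below by a positive constant $\delta(r_0,C_0)$ uniformly in $x$; hence $g^{ij}$ is uniformly bounded in $C^\infty(B_{r_0})$ as well.

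For the converse, suppose such $r_0$ exists with the family $\{g_{ij},g^{ij}\}_{x\in M}$ bounded in $C^\infty(B_{r_0})$. The Christoffel symbols $\Gamma^k_{ij}$ are polynomials in $g^{ab}$ and the first partial derivatives of $g_{ab}$; the components $R^{l}{}_{ijk}$ of the curvature tensor are polynomials in $\Gamma$ and its first partials; and iterating, the components of $\nabla^mR$ in these normal coordinates are polynomials in $g_{ab}$, $g^{ab}$, and their partial derivatives of order $\le m+2$. Evaluating at the origin of $B_{r_0}$, where the coordinate frame is orthonormal, these expressions are uniformly bounded in $x$, giving $|\nabla^mR_M(x)|\le C_m'$ for every $x\in M$ and every $m\in\N$. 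Combined with the hypothesis $\inj_M>r_0>0$, this shows $M$ has bounded geometry.

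The main technical obstacle is the ``only if'' direction: propagating uniform $C^m$ bounds from the curvature to the metric coefficients in normal coordinates. The clean way to organize this is by induction on $m$, rewriting the Jacobi equation as a first-order system in $(J_i,\nabla_tJ_i)$ and differentiating repeatedly both radially and transversally, so that each higher-order estimate reduces to a Gronwall argument with coefficient bounds drawn from $C_0,\dots,C_m$. The regularity of normal coordinates near $t=0$ requires some care because $J_i(0)=0$, but the smooth dependence on initial data resolves this; the full computation is carried out in the references cited after the statement.
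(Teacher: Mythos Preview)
The paper does not supply its own proof of this proposition; it is stated with citations to Schick, Roe, and Eichhorn and then used as a black box. Your sketch follows the standard argument found in those references (Jacobi equation along radial geodesics plus Gronwall for the ``only if'' direction, and the polynomial expression of $\nabla^mR$ in terms of $g_{ij}$, $g^{ij}$, and their partials for the ``if'' direction), so there is nothing to compare against and your approach is the expected one.

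One minor imprecision worth flagging: the coordinate vector fields $(\kappa_x)_*(\partial_i)$ along $\gamma_v$ are not literally the Jacobi fields $J_i$ with $J_i(0)=0$ and $\nabla_tJ_i(0)=e_i$; rather, $d\exp_x|_{tv}(e_i)=t^{-1}J_i(t)$, so $g_{ij}(tv)=t^{-2}\langle J_i(t),J_j(t)\rangle$. This factor of $t$ is exactly what produces the apparent singularity at $t=0$ you allude to at the end, and handling it cleanly (e.g., by working with the rescaled fields or with the Taylor expansion of $g_{ij}$ at the origin) is part of the care you correctly defer to the cited sources.
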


\begin{prop}[See the proof of {\cite[Proposition~3.2]{Schick2001}, \cite[A1.2 and~A1.3]{Shubin1992}}] \label{p: c}
Suppose that $M$ is of bounded geometry. For every $\tau>0$, there is some map $c\colon\R^+\to\N$, depending only on $\tau$ and the geometric bounds of $M$, such that, for any $\tau$-separated subset $X\subset M$, and all $x\in M$ and $\delta>0$, we have $|D(x,\delta)\cap X|\le c(\delta)$. 
\end{prop}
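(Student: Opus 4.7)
The plan is to use a standard volume-packing argument based on the geometric bounds. First I would exploit \Cref{p: g_ij} to derive a uniform local volume comparison: in the normal parametrizations $\kappa_x\colon B_{r_0}\to B_M(x,r_0)$, the volume density $\sqrt{\det(g_{ij})}$ is bounded above and below by positive constants depending only on the geometric bounds of $M$. Hence there exist $0<a_1\le a_2$ such that for all $y\in M$ and $0<s\le r_0$,
\[
a_1\,s^n\le\vol(B(y,s))\le a_2\,s^n.
\]

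Next I would establish a global upper bound: for every $\delta>0$ there is $V(\delta)$, depending only on $\delta$ and the geometric bounds of $M$, with $\vol(B(x,\delta))\le V(\delta)$ for all $x\in M$. For $\delta\le r_0$ this is immediate from the previous step. For $\delta>r_0$, the bounded geometry hypothesis provides a uniform lower Ricci bound, so Bishop--Gromov volume comparison applies and yields $V(\delta)$ as the volume of a ball of radius $\delta$ in the model space of constant curvature equal to the lower bound. (Alternatively, one can iterate a covering argument using maximal $(r_0/2)$-separated subsets, but the Bishop--Gromov route is cleanest.)

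Finally, given a $\tau$-separated subset $X\subset M$, a point $x\in M$ and a radius $\delta>0$, observe that for distinct $y,y'\in X\cap D(x,\delta)$ the open balls $B(y,\tau/2)$ and $B(y',\tau/2)$ are disjoint, and each of them is contained in $B(x,\delta+\tau/2)$. Setting $s=\min\{\tau/2,\,r_0\}$, each such ball has volume at least $a_1 s^n$ by the first step. Summing disjoint volumes,
\[
|D(x,\delta)\cap X|\cdot a_1 s^n\le\vol(B(x,\delta+\tau/2))\le V(\delta+\tau/2),
\]
so one may take
\[
c(\delta)=\left\lfloor\frac{V(\delta+\tau/2)}{a_1 s^n}\right\rfloor,
\]
which depends only on $\tau$, $\delta$, $n$ and the geometric bounds of $M$, as required.

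The only real obstacle is the global volume upper bound $V(\delta)$: one cannot extract it directly from \Cref{p: g_ij} by naive iteration, since counting the covering balls itself requires a packing estimate. Bishop--Gromov (available because the curvature bound in the bounded geometry hypothesis implies a Ricci lower bound) bypasses this circularity and gives $V(\delta)$ depending only on the data allowed.
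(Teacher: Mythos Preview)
Your argument is correct and is precisely the standard volume-packing proof that the cited references (Schick and Shubin) carry out; the paper itself does not supply a proof for this proposition, only the citations. There is nothing to add or fix.
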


\begin{prop}\label{p: d(x' y') ne sigma}
	Let $X$ be a $\tau$-separated $\eta$-relatively dense subset of a manifold of bounded geometry $M$ for some $0<\tau<\eta$. Given $0<\epsilon<\tau/2$ and $\sigma>0$, let $\tau'=\tau-2\epsilon$ and $\eta'=\eta+\epsilon$. Then there is some $0<P=P(\epsilon)<\sigma$, depending only on $\tau$, $\epsilon$, $\sigma$ and the geometric bounds of $M$, such that $P(\epsilon)\to0$ as $\epsilon\to0$ and, for every $0<\rho<P$ and $A\subset X$ satisfying $d(a,b)\notin (\sigma-\rho,\sigma+\rho)$ for all $a,b\in A$, there is an $\epsilon$-perturbation  $X'\subset M$  of $X$  satisfying $A\subset X'$ and  $d(x',y')\notin(\sigma-\rho,\sigma+\rho)$ for all $x',y'\in X'$. In particular, $X'$ is $\tau'$-separated and $\eta'$-relatively dense.
\end{prop}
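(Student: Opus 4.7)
The plan is a greedy inductive construction. Enumerate the ``movable'' points $X\setminus A=\{x_1,x_2,\dots\}$, and at each stage $i$ pick $x_i'\in D(x_i,\epsilon)$ so that $d(x_i',y)\notin(\sigma-\rho,\sigma+\rho)$ for every $y$ already placed, i.e.\ every $y\in A\cup\{x_j':j<i\}$. Set $h=\id$ on $A$ and $h(x_i)=x_i'$; this is the desired perturbation $X'=h(X)$. Bijectivity of $h$ is free of charge: since $X$ is $\tau$-separated with $\tau>2\epsilon$, the balls $D(x,\epsilon)$ for $x\in X$ are pairwise disjoint, so different points move to different points. The final sentence about $\tau'$-separation and $\eta'$-relative density is then immediate from \Cref{l:perturbationnet}. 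Note that pairs $a,b\in A$ cause no trouble because the hypothesis on $A$ says $d(a,b)\notin(\sigma-\rho,\sigma+\rho)$ already.

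\textbf{The inductive step by volume comparison.} At stage $i$, let
\[
F_i=\bigcup_{y}\{\,z\in D(x_i,\epsilon)\mid d(z,y)\in(\sigma-\rho,\sigma+\rho)\,\},
\]
where $y$ ranges over $A\cup\{x_j':j<i\}$. The set $F_i$ is open in $D(x_i,\epsilon)$, so it suffices to show $\vol(F_i)<\vol(D(x_i,\epsilon))$, and then take any $x_i'\in D(x_i,\epsilon)\setminus F_i$. Only those $y$ with $d(x_i,y)\in(\sigma-\rho-\epsilon,\sigma+\rho+\epsilon)$ contribute (so the corresponding original points $x_j$ lie in $D(x_i,\sigma+\rho+2\epsilon)$); by \Cref{p: c} their number is bounded by a constant $N_0=N_0(\tau,\sigma,\text{geom})$, independent of $i$.

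\textbf{Shell volume and choice of $P$.} Fix such a $y$. Using normal coordinates at $x_i$ (valid on $D(x_i,\epsilon)$ provided $\epsilon$ is below the $r_0$ of \Cref{p: g_ij}), the metric is comparable to the Euclidean one up to constants depending only on the geometric bounds. Since $d(\cdot,y)$ has unit gradient a.e., its level sets are hypersurfaces whose pieces inside $D(x_i,\epsilon)$ have $(n-1)$-volume bounded by $C\epsilon^{n-1}$ (project onto a hyperplane transverse to the gradient). The coarea formula then gives
\[
\vol\bigl(\{z\in D(x_i,\epsilon)\mid d(z,y)\in(\sigma-\rho,\sigma+\rho)\}\bigr)\le 2\rho\cdot C\epsilon^{n-1}.
\]
Summing, $\vol(F_i)\le 2N_0 C\rho\,\epsilon^{n-1}$, while bounded geometry yields $\vol(D(x_i,\epsilon))\ge c_0\epsilon^n$ for a constant $c_0$ depending only on the geometric bounds. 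Setting
\[
P(\epsilon):=\min\!\Bigl\{\,\tfrac{\sigma}{2},\ \tfrac{c_0}{2N_0 C}\,\epsilon\,\Bigr\}
\]
makes $\vol(F_i)<\vol(D(x_i,\epsilon))$ for every $\rho<P(\epsilon)$, so the induction proceeds; clearly $P(\epsilon)\to 0$ as $\epsilon\to 0$ and $P(\epsilon)<\sigma$, as required.

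\textbf{Main obstacle.} The real technical point is the sphere-slice bound $\mathcal H^{n-1}(S(y,s)\cap D(x_i,\epsilon))\le C\epsilon^{n-1}$, uniform in $s$ and in the base point, depending only on the geometric bounds. Everything else—the greedy scheme, the bijectivity from $\tau$-separation, the use of \Cref{p: c} to make the number of active constraints finite and uniform, and the final application of \Cref{l:perturbationnet}—is straightforward. No issue arises from ``future'' points $x_k$ with $k>i$: when their turn comes, they need only avoid shells around already-placed neighbors, and the same volume estimate applies, so the induction really does carry through to infinity.
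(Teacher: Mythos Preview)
Your greedy induction and volume-counting argument is exactly the paper's approach; the structure, the use of \Cref{p: c} to bound the number of active constraints, the bijectivity from $\tau$-separation, and the final appeal to \Cref{l:perturbationnet} all match. The one substantive difference is in how you estimate the forbidden volume at the inductive step. You bound the \emph{local} shell-slice $\vol\big(C(y,\sigma-\rho,\sigma+\rho)\cap D(x_i,\epsilon)\big)$ via coarea together with the pointwise bound $\mathcal H^{n-1}\big(S(y,s)\cap D(x_i,\epsilon)\big)\le C\epsilon^{n-1}$, which you rightly flag as the only nontrivial point and do not fully justify (the ``project onto a transverse hyperplane'' heuristic is shaky once the cut locus of $y$ enters $D(x_i,\epsilon)$, and the gradient direction varies over the ball). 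The paper sidesteps this entirely: it uses the trivial inclusion $C(y,\sigma-\rho,\sigma+\rho)\cap B(x_i,\epsilon)\subset C(y,\sigma-\rho,\sigma+\rho)$ and bounds the \emph{full} corona volume $\vol C(y,\sigma-\rho,\sigma+\rho)\le L(\rho)$ uniformly in $y$, which tends to $0$ as $\rho\to 0$ by bounded geometry (e.g.\ Bishop--Gromov, or directly from \Cref{p: g_ij}). One then chooses $P(\epsilon)$ so that $N_0\cdot L(\rho)<\vol B(x_i,\epsilon)$. This gives a worse dependence of $P$ on $\epsilon$ (roughly $\epsilon^n$ rather than your $\epsilon$), but that is irrelevant for the proposition, and it removes your ``main obstacle'' completely.
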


\begin{proof}
By \Cref{p: g_ij,p: c,}, the following properties hold:
	\begin{enumerate}[(a)]
	
		\item\label{i: |Y cap D(y sigma + rho + epsilon)| le C} There are $C,P_0>0$ such that every $\tau'$-separated subset $Y\subset M$ satisfies $|Y\cap D(y,\sigma+\rho+\tau/2)|\le C$ for all $y\in Y$ and $0<\rho<P_0$.
		
		\item\label{i: vol B(x epsilon) ge K} There is some $K=K(\epsilon)>0$, with $K(\epsilon)\to0$ as $\epsilon\to0$, such that $\vol B(x,\epsilon)\ge K$ for all $x\in M$.
		
		\item\label{i: vol C(x sigma - rho sigma + rho) le L} With the notation of~\ref{i: |Y cap D(y sigma + rho + epsilon)| le C} and~\ref{i: vol B(x epsilon) ge K}, given $0<L<K/C$, there is some $0<P=P(\epsilon)\le P_0$, with $P(\epsilon)\to0$ as $\epsilon\to0$, such that $\vol C(x,\sigma-\rho,\sigma+\rho)\le L$ for $x\in M$ and $0<\rho<P$.
		
	\end{enumerate}
Take any $0<\rho<P$.
	
	\begin{claim}\label{c.xprimecoronas}
		Let $Y\subset M$ be a $\tau'$-separated subset, and let
		\[
		B=\{\,x\in Y \mid d(x,y)\notin(\sigma-\rho,\sigma+\rho)\ \forall y\in Y\,\}\;.
		\]
		Then, for all $x\in Y\setminus B$, there is some $\hat x\in M$ such that $d(x,\hat x)<\epsilon$ and 
		\[
		((Y\setminus\{x\})\cup\{\hat x\})\cap C(\hat x,\sigma-\rho,\sigma + \rho)=\emptyset\;.
		\]
	\end{claim}

By~\ref{i: |Y cap D(y sigma + rho + epsilon)| le C}, the subset
	\[
	Z:=\{\,z\in X\mid B(x,\epsilon)\cap C(z,\sigma-\rho,\sigma+\rho)\ne\emptyset\,\}\subset X\cap D(x,\sigma+\rho+\tau/2)
	\]
has cardinality at most $C$. Thus, by~\ref{i: vol C(x sigma - rho sigma + rho) le L} and~\ref{i: vol B(x epsilon) ge K}, for all $x\in Y\setminus B$, 
	\[
	\vol\Big(B(x,\epsilon)\cap \bigcup \limits_{z\in Z} C(z,\sigma -\rho, \sigma + \rho)\Big)
	\le\sum_{z\in Z}\vol C(z,\sigma -\rho, \sigma + \rho)\le CL< K\le\vol B(x,\epsilon)\;. 
	\]
So there is some $\hat x\in B(x,\epsilon)$ such that $\hat x \notin C(y,\sigma-\rho,\sigma+\rho)$ for every $y\in Z$. Therefore $\hat x \notin C(y,\sigma-\rho,\sigma+\rho)$ for all $y\in Y$, and \Cref{c.xprimecoronas} follows.
	
	Let  $x_1,x_2,\dots$ be a (finite or infinite) sequence enumerating the elements of $X\setminus A$. Then $X'$ is defined as the union of $A$ and a sequence  of elements $x_i'$ such that $d(x_{i}',x_i)<\epsilon$ for all $i$. In particular, $X'$ will be an $\epsilon$-perturbation of $X$. Let us define $x_i'$ by induction on $i$ as follows. We use the notation $X_0=X$ and $X_i=(X_{i-1}\setminus\{x_i\})\cup\{x'_i\}$ ($i\ge1$). Note that $X_i$ is also an $\epsilon$-perturbation of $X$ and therefore $\tau'$-separated. Assume that $X_{i-1}$ is defined for some $i\ge1$. By Claim~\ref{c.xprimecoronas}, we can take some $x_i'\in X\setminus X_{i-1}$  such that $d(x_i,x_i')<\epsilon$ and $X_i\cap C(x'_i,\sigma-\rho,\sigma + \rho)=\emptyset$. The resulting set $X'$ satisfies the desired properties; in particular, it is a $\tau'$-separated $\eta'$-relatively dense subset of $M$ by \Cref{l:perturbationnet}.
\end{proof}

\begin{prop}\label{p: h = id on X => h = id on M}
	Let $X$ be an $\epsilon$-relatively dense subset of $M$ for some $\epsilon>0$, and let $h$ be an isometry of $M$. If $\epsilon$ is small enough and $h=\id$ on $X$, then $h=\id$ on $M$.
\end{prop}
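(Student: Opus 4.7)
The plan is to exploit the classical rigidity fact that an isometry of a connected Riemannian manifold is determined by its value and differential at a single point, combined with the intertwining relation $h\circ\exp_{x_0}=\exp_{x_0}\circ h_{*,x_0}$, valid on $B(0,\inj(x_0))\subset T_{x_0}M$ whenever $h(x_0)=x_0$. I fix any $x_0\in X$; since $h$ is the identity on $X$, we have $h(x_0)=x_0$, and it therefore suffices to prove that $h_{*,x_0}=\id$ on $T_{x_0}M$.

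Next I pick $r<\inj(x_0)$ and an orthonormal basis $e_1,\dots,e_n$ of $T_{x_0}M$, set $v_i=(r/2)\,e_i$, and let $p_i=\exp_{x_0}(v_i)$. Because $\exp_{x_0}^{-1}$ is uniformly continuous on the compact closed ball $\overline{B}(x_0,3r/4)\subset B(x_0,\inj(x_0))$, there exists a threshold $\epsilon_0>0$, depending only on $x_0$ and the local geometry, such that any $y\in\overline{B}(x_0,3r/4)$ with $d(p_i,y)\le\epsilon_0$ satisfies $|\exp_{x_0}^{-1}(y)-v_i|<r/(4\sqrt{n})$. Assuming $\epsilon<\min(r/4,\epsilon_0)$, the $\epsilon$-relative density of $X$ furnishes points $y_i\in X$ with $d(p_i,y_i)\le\epsilon$; these lie in $B(x_0,3r/4)$, so the vectors $w_i:=\exp_{x_0}^{-1}(y_i)$ are well defined and satisfy $|w_i-v_i|<r/(4\sqrt{n})$.

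Because the $v_i$ are mutually orthogonal with $|v_i|=r/2$, a standard Cauchy--Schwarz perturbation estimate then shows that $w_1,\dots,w_n$ remain linearly independent and hence form a basis of $T_{x_0}M$. The intertwining identity together with $h(y_i)=y_i$ gives $h_{*,x_0}(w_i)=\exp_{x_0}^{-1}(h(y_i))=w_i$ for every $i$, so $h_{*,x_0}=\id$, which by the rigidity fact above forces $h=\id$ on $M$. The main subtlety lies in calibrating $\epsilon$ so that the perturbation $v_i\mapsto w_i$ still produces a basis; this is controlled by the factor $1/\sqrt{n}$ above and depends only on $\inj(x_0)$ and the dimension $n$.
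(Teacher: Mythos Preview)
Your proof is correct and follows essentially the same approach as the paper: fix a point $x_0$ with $h(x_0)=x_0$, pull back nearby points of $X$ through $\exp_{x_0}^{-1}$, argue that their preimages span $T_{x_0}M$ once $\epsilon$ is small enough, and conclude $h_{*,x_0}=\id$, hence $h=\id$ by rigidity of isometries. The paper phrases the spanning step more abstractly (the whole set $\exp_{x_0}^{-1}(X)\cap\check B(r_0)$ is $\lambda\epsilon$-dense and therefore generates $T_{x_0}M$), while you exhibit an explicit perturbed basis with a quantitative bound; the underlying idea is the same.
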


\begin{proof}
	Fix any $x_0\in M$ and $0<r_0<\inj_M(x_0)$. For $0<r\le r_0$, let $\check B(r)$ denote the open ball $B(0,r)$ in $T_{x_0}M$. Moreover let $\check X=\exp_{x_0}^{-1}(X)\subset T_{x_0}M$. There is some $\lambda\ge1$ such that $\exp_{x_0}:\check B(r_0)\to B_M(x_0,r_0)$ is a $\lambda$-bi-Lipschitz diffeomorphism. Since $X$ is an $\epsilon$-relatively dense subset of $M$, for all $x\in B_M(x_0,r_0-\epsilon)$, there is some $y\in X\cap B_M(x_0,r_0)$ with $d_M(x,y)<\epsilon$. Hence, for all $v\in\check B(r_0-\epsilon)$, there is some $w\in\check X\cap\check B(r_0)$ with $|v-w|<\lambda\epsilon$. If $\epsilon$ is small enough, it follows that $\check X\cap\check B(r_0)$ generates the linear space $T_{x_0}M$. Since $h_*=\id$ on $\check X\cap\check B(r_0)$ because $h=\id$ on $X$, we get $h_*=\id$ on $T_{x_0}M$, yielding $h=\id$ on $M$.
\end{proof}

\subsection{Foliated spaces}\label{ss: fol sps} 

A \emph{foliated space} (or \emph{lamination}) $\fX\equiv(\fX,\FF)$ of \emph{dimension} $n$ is a Polish space $\fX$ equipped with a partition $\FF$ (a \emph{foliated}  or \emph{laminated structure}) into injectively immersed manifolds (\emph{leaves}) so that $\fX$ has an open cover $\{U_i\}$ with homeomorphisms $\phi_i:U_i\to B_i\times\fT_i$, for some open balls $B_i\subset\R^n$ and Polish spaces $\fT_i$, such that the slices $B_i\times\{*\}$ correspond to open sets in the leaves (\emph{plaques}); every $(U_i,\phi_i)$ is called a \emph{foliated chart} and $\UU=\{U_i,\phi_i\}$ a \emph{foliated atlas}. The corresponding changes of foliated coordinates are locally of the form $\phi_i\phi_j^{-1}(y,z)=(f_{ij}(y,z),h_{ij}(z))$. Let $p_i:U_i\to\fT_i$ denote the projection defined by every $\phi_i$, whose fibers are the plaques. The subspaces transverse to the leaves are called \emph{transversals}; for instance, the subspaces $\phi_i^{-1}(\{*\}\times\fT_i)\equiv\fT_i$ are local transversals. A transversal is said to be \emph{complete} if it meets all leaves. $\fX$ is called a \emph{matchbox manifold} if it is compact and connected, and its local transversals are totally disconnected.

We can assume that $\UU$ is \emph{regular} in the sense that it is locally finite, every $\phi_i$ can be extended to a foliated chart whose domain contains $\overline{U_i}$, and every plaque of $U_i$ meets at most one plaque of $U_j$. In this case, the maps $h_{ij}$ define unique homeomorphisms $h_{ij}:p_j(U_i\cap U_j)\to p_i(U_i\cap U_j)$ (\emph{elementary holonomy transformations}) so that $p_i=h_{ij}p_j$ on $U_i\cap U_j$, which generate a pseudogroup $\HH$ on $\fT:=\bigsqcup_i\fT_i$. This $\HH$ is unique up to Haefliger's equivalences \cite{Haefliger1985,Haefliger1988}, and its equivalence class is called the \emph{holonomy pseudogroup}. The $\HH$-orbits are equipped with a connected graph structure so that a pair of points is joined by an edge if they correspond by some $h_{ij}$.  The projections $p_i$ define an identity between the leaf space $\fX/\FF$ and the orbit space $\fT/\HH$. Moreover we can choose points $y_i\in B_i$ so that the corresponding local transversals $\phi_i^{-1}(\{y_i\}\times\fT_i)$ are disjoint. Then their union is a complete transversal homeomorphic to $\fT$, and the $\HH$-orbits are given by the intersection of the complete transversal with the leaves. If $\fX$ is compact, then $\UU$ is finite, and therefore the vertex degrees of the $\HH$-orbits is bounded by the finite number of maps $h_{ij}$. Moreover the coarse quasi-isometry class of the $\HH$-orbits is independent of $\UU$ in this case.

If the functions $y\mapsto f_{ij}(y,z)$ are $C^\infty$ with partial derivatives of arbitrary order depending continuously on $z$, then $\UU$ defines a \emph{$C^\infty$ structure} on $\fX$, and $\fX$ becomes a \emph{$C^\infty$ foliated space} with such a structure. Then $C^\infty$ bundles and their $C^\infty$ sections also make sense on $\fX$, defined by requiring that their local descriptions are $C^\infty$ in a similar sense. For instance, the tangent bundle $T\fX$ (or $T\FF$) is the $C^\infty$ vector bundle over $\fX$ that consists of the vectors tangent to the leaves, and a \emph{Riemannian metric} on $\fX$ consists of Riemannian metrics on the leaves that define a $C^\infty$ section on $\fX$. This gives rise to the concept of \emph{Riemannian foliated space}. If $\fX$ is a compact $C^\infty$ foliated space, then the differentiable quasi-isometry type of every leaf is independent of the choice of the Riemannian metric on $\fX$, and is coarsely quasi-isometric to the corresponding $\HH$-orbits (see e.g.~\cite[Section~10.3]{AlvarezCandel2018}).  

Many of the concepts and properties of foliated spaces are direct generalizations from foliations. Several results about foliations have obvious versions for foliated spaces, like the holonomy group and holonomy cover of the leaves, and the Reeb's local stability theorem. This can be seen in the following standard references about foliated spaces: \cite{MooreSchochet1988}, \cite[Chapter~11]{CandelConlon2000-I}, \cite[Part~1]{CandelConlon2003-II} and \cite{Ghys2000}.

\subsection{Space of pointed connected complete Riemannian manifolds}\label{ss: intro: univ Riem fol sps}

Let us recall some concepts and properties used in our main results and their proofs, already used in \cite{AlvarezBarral2017}.
Consider pairs $(M,x)$, where $M$ is a complete connected Riemannian $n$-manifold and $x\in M$.
Two such pairs are equivalent $(M,x)\sim(M',x')$ if there is pointed isometry $\phi:(M,x)\to(M',x')$. 
Let $\MM_*^n$ be the Polish space of equivalence classes $[M,x]$ of pairs $(M,x)$, with the topology induced by the $C^\infty$ convergence of pointed Riemannian manifolds. 
For any $M$ as above, there is a map $\iota_{M}:M\to\MM_*^n$ defined by $\iota_{M}(x)=[M,x]$.
The images $[M]$ of all possible maps $\iota_{M}$ form a canonical partition of $\MM_*^n$, which is considered when using saturations or minimal sets in $\MM_*^n$. 

Intuitively, the space $\MM_*^n$ is constructed as follows: For every isometry class of complete, connected Riemannian $n$-manifold, take a representative $M$ and consider the quotient $M/\Iso(M)$.
$\MM_*^n$, as a set, is the union of all these quotients, and the maps $\iota_M\colon M\to \MM_*^n$ are just the composition of the quotient and the inclusion $M\to M/\Iso(M)\to \MM_*^n$, the image being denoted by $[M]=\{[M,x]\mid x\in M\}$.
The topology can be described in terms of convergence as follows: the sequence $[M_n,x_n]$ converges to $[M,x]$ if, for every radius $r>0$, there are embeddings $h_m\colon B_M(x,r)\to M_m$ for $m$ large enough satisfying that $h_m^* g_{M_m}\to g_M|_{B(x,r)}$ in the $C^\infty$ topology; this describes a Polish topology on $\MM_*^n$ \cite[Theorem~1.3]{AlvarezBarral2017}.

As an illustration, let $M$ be the standard $2$-sphere. Then, since $\Iso(M)$ acts transitively, $[M]\subset\MM_*^2$ is a singleton and $\iota_M$ is a constant map. Let $L$ now be the $2$-sphere endowed with a modified metric so that $\Iso(L)=\{\id\}$, then $\iota_L:L\to[L]\subset\MM_*^2$ is actually a homeomorphism. Finally, let $L_n$ be homeomorphic to $L$ but endowed with the scaled Riemannian metric $ng_L$. Then, for any point $x\in L$, we have $[L_n, x]\to [\mathbb{R}^2,p]$, where $p$ is any point in the plane.

There are other similar constructions of universal spaces consisting of equivalence classes of pointed objects; for example, for tilings and graphs~\cite{AldousLyons}.
Adopting the same terminology, it is said that $M$ is:
\begin{description}

\item[aperiodic] if $\iota_{M}$ is injective ($\id_M$ is the only isometry of $M$);

\item[limit aperiodic] if $M'$ is aperiodic for all $[M',x']\in\overline{[M]}$; and

\item[repetitive] if, roughly speaking, every ball is approximately repeated uniformly in $M$ (\Cref{ss: MM_*^n and widehat MM_*^n}).

\end{description}

Intuitively, aperiodic means that $M$ has no non-trivial symmetries, whereas limit aperiodic means that not only $M$ has no non-trivial symmetries, but the same is true for all the limiting manifolds (think of the sequence of scaled aperiodic 2-spheres converging to the plane). When $\overline{[M]}$ is compact, the repetitivity of $M$ means that $\overline{[M]}$ is minimal (\Cref{p: M is repetitive <=> overline im hat iota_M f is minimal}).

As seen in~\cite{AldousLyons}, one can get new universal spaces by considering, instead of equivalence classes of pointed graphs, equivalence classes of pointed colored graphs where vertices are labeled. Similarly, we can add to the pairs $(M,x)$ extra structure and thus obtain larger universal spaces where realization of manifolds as leaves becomes easier. This will be done in the next few sections.

\subsection{The space $\widehat{\MM}_*^n$}
\label{ss: MM_*^n and widehat MM_*^n}

For any $n\in\N$, consider triples $(M,x,f)$, where $(M,x)$ is a pointed complete connected Riemannian $n$-manifold and $f:M\to\fH$ is a $C^\infty$ function to a (separable real) Hilbert space (of finite or infinite dimension). Two such triples, $(M,x,f)$ and $(M',x',f')$, are said to be \emph{equivalent} if there is a pointed isometry $h:(M,x)\to(M',x')$ such that $h^*f'=f$. Let\footnote{In \cite{AlvarezBarralCandel2016,AlvarezBarral2017,AlvarezCandel2018}, the notation $\MM_*(n)$ and $\widehat{\MM}_*(n)$ was used instead of $\MM_*^n$ and $\widehat{\MM}_*^n$, adding the superindex ``$\infty$'' when equipped with the topology defined by the $C^\infty$ convergence.} $\widehat{\MM}_*^n=\widehat{\MM}_*^n(\fH)$ be the set\footnote{The cardinality of each complete connected Riemannian $n$-manifold is less than or equal to the cardinality of the continuum, and therefore it may be assumed that its underlying set is contained in \(\R\). With this assumption, $\widehat{\MM}_*^n$ is a well defined set.} of equivalence classes $[M,x,f]$ of the above triples $(M,x,f)$. A sequence $[M_i,x_i,f_i]\in\widehat{\MM}_*^n$ is said to be \emph{$C^\infty$ convergent} to $[M,x,f]\in\widehat{\MM}_*^n$ if, for any compact domain $D\subset M$ containing $x$, there are pointed $C^\infty$ embeddings $h_i:(D,x)\to(M_i,x_i)$, for large enough $i$, such that $h_i^*g_i\to g_M|_D$ and $h_i^*f_i\to f|_D$ as $i\to\infty$  in the $C^\infty$ topology\footnote{The $C^{m+1}$ embeddings and $C^m$ convergence of \cite[Definition~1.1]{AlvarezBarralCandel2016} and \cite[Definition~1.2]{AlvarezBarral2017}, for arbitrary order $m$, can be assumed to be $C^\infty$ embeddings and $C^\infty$ convergence \cite[Theorem~2.2.7]{Hirsch1976}.}. In other words, for all $m\in\N$, $R,\epsilon>0$ and $\lambda>1$, there is an $(m,R,\lambda)$-p.p.q.i.\ $h_i:(M,x)\rightarrowtail (M_i,x_i)$, for $i$ large enough, with $|\nabla^l(f-h_i^*f_i)|<\epsilon$ on $D_M(x,R)$ for $0\le l\le m$ \cite[Propositions~6.4 and~6.5]{AlvarezBarralCandel2016}. The $C^\infty$ convergence describes a Polish topology on $\widehat{\MM}_*^n$ \cite[Theorem~1.3]{AlvarezBarral2017} and the evaluation map $\ev:\widehat{\MM}_*^n\to\fH$, $\ev([M,x,f])=f(x)$, is continuous. 

We will now review some basic properties of the space $\widehat{\MM}_*(n)$; a more detailed exposition can be found in ~\cite{AlvarezBarral2017}.
For any connected complete Riemannian $n$-manifold $M$ and any $C^\infty$ function $f:M\to\fH$, there is a canonical continuous map $\hat\iota_{M,f}:M\to\widehat{\MM}_*^n$ defined by $\hat\iota_{M,f}(x)=[M,x,f]$, whose image is denoted by $[M,f]$. We have $[M,f]\equiv\Iso(M,f)\backslash M$, where $\Iso(M,f)$ denotes the group of isometries of $M$ preserving $f$. All possible sets $[M,f]$ form a canonical partition of $\widehat{\MM}_*^n$, which is considered when using saturations or minimal sets in $\widehat{\MM}_*^n$. Any bounded linear map between Hilbert spaces, $\Phi:\fH\to\fH'$, induces a relation-preserving continuous map $\Phi_*:\widehat{\MM}_*^n(\fH)\to\widehat{\MM}_*^n(\fH')$, given by $\Phi_*([M,x,f])=[M,x,\Phi f]$ (it is relation-preserving because $\Phi_*([M,f])=[M,\Phi f]$).

\begin{lem}\label{l: the saturation of any open subset is open}
The saturation of any open subset of $\widehat{\MM}_*^n$ is open, and therefore the closure of any saturated subset of $\widehat{\MM}_*^n$ is saturated.
\end{lem}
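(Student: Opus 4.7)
The second assertion is an immediate consequence of the first together with \Cref{l: equiv rels on top sps}, so the substantive claim is that for any open $U\subset\widehat{\MM}_*^n$ its saturation $\widetilde U:=\bigcup_{[M,x,f]\in U}[M,f]$ is open. Since $\widehat{\MM}_*^n$ is Polish, hence metrizable, it suffices to show that $\widetilde U$ is sequentially open; equivalently, that its complement is sequentially closed.

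The plan is as follows. Suppose $[M_i,x_i,f_i]\to[M,x,f]$ in $\widehat{\MM}_*^n$ and $[M,x,f]\in\widetilde U$. By the definition of the saturation there is some $y\in M$ with $[M,y,f]\in U$. My goal is to exhibit points $y_i\in M_i$ such that $[M_i,y_i,f_i]$ lies in the same class as $[M_i,x_i,f_i]$ (which just means $y_i$ is some point of $M_i$) and such that $[M_i,y_i,f_i]\to[M,y,f]$. Once this is done, openness of $U$ forces $[M_i,y_i,f_i]\in U$ for $i$ large, and hence $[M_i,x_i,f_i]\in\widetilde U$ for $i$ large, as required.

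The construction of $y_i$ is a ``base-point shift'' inside the definition of $C^\infty$ convergence. Since $M$ is connected and complete, I may fix a compact domain $D\subset M$ containing both $x$ and $y$ (for instance, a closed metric neighborhood of a minimizing geodesic from $x$ to $y$). By hypothesis there exist, for all $i$ large, pointed $C^\infty$ embeddings $h_i\colon (D,x)\to(M_i,x_i)$ with $h_i^*g_i\to g_M|_D$ and $h_i^*f_i\to f|_D$ in the $C^\infty$ topology on $D$. Set $y_i:=h_i(y)\in M_i$. Given any compact domain $D'\subset M$ containing $y$, by enlarging $D$ at the outset I may assume $D'\subset D$; then $h_i|_{D'}$ is a pointed $C^\infty$ embedding $(D',y)\to(M_i,y_i)$, and the $C^\infty$ convergences $h_i^*g_i\to g_M|_D$ and $h_i^*f_i\to f|_D$ restrict to the corresponding convergences on $D'$. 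This is exactly the defining property of $[M_i,y_i,f_i]\to[M,y,f]$.

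I do not expect any serious obstacle: the whole argument is a soft application of the definition of $C^\infty$ convergence of pointed Riemannian manifolds with functions, the main point being that the embeddings $h_i$ from that definition do not ``privilege'' the chosen base point $x$ and can be reused to witness convergence with respect to any other base point $y$ in their (sufficiently large) domain $D$. The only item requiring a line of justification is the existence of a compact domain $D\subset M$ containing both $x$ and $y$, which follows from the connectedness and geodesic completeness of $M$.
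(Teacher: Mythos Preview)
Your approach is essentially the same as the paper's: shift the base point via the embeddings $h_i$ coming from the definition of $C^\infty$ convergence. The paper carries this out in the p.p.q.i.\ formulation, choosing $R>d_M(x,y)$, taking $(m,2R,\lambda)$-p.p.q.i.'s $h_i:(M,x)\rightarrowtail(M_i,x_i)$, and observing $D_M(y,R)\subset D_M(x,2R)$ so that $[M_i,h_i(y),f_i]$ lands in $\UU$.

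One small quantifier slip: you set $y_i=h_i(y)$ with $h_i$ defined on a fixed compact domain $D$, and then, to verify $[M_i,y_i,f_i]\to[M,y,f]$, you say ``by enlarging $D$ at the outset I may assume $D'\subset D$'' for an \emph{arbitrary} $D'$. But enlarging $D$ changes the embeddings $h_i$ and hence the points $y_i$, so as written you are not producing a single sequence $y_i$ that works for all $D'$. This is harmless for the lemma: you do not actually need the full convergence $[M_i,y_i,f_i]\to[M,y,f]$, only that $[M_i,y_i,f_i]\in U$ eventually, and for that a single $D'$ (dictated by openness of $U$ at $[M,y,f]$) suffices---which is exactly how the paper argues. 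Alternatively, a routine diagonal argument over an exhaustion $D_1\subset D_2\subset\cdots$ would give the stronger convergence claim you stated.
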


\begin{proof}
Let $\VV$ be the saturation of some open $\UU\subset\widehat{\MM}_*^n$, and let $[M,x,f]\in\VV$.
Since the saturation of $[M,x,f]$ is the set $[M,f]$, this means that there is some $y\in M$ such that $[M,y,f]\in\UU$. 
But $\UU$ is open and $C^\infty$-convergence is defined in terms of pointed partial quasi-isometries, as explained above, so there are $m\in\N$, $R,\epsilon>0$ and $\lambda>1$ satisfying that, for all $[M',y',f']\in\widehat{\MM}_*^n$, if there is an $(m,R,\lambda)$-p.p.q.i.\ $h:(M,y)\rightarrowtail(M',y')$ with $|\nabla^l(f-h^*f')|<\epsilon$ on $D_M(y,R)$ for $0\le l\le m$, then $[M',y',f']\in\UU$. We can assume that $R>d_M(x,y)$. 

Take now any convergent sequence $[M_i,x_i,f_i]\to[M,x,f]$ in $\widehat{\MM}_*^n$; we will show that necessarily $[M_i,x_i,f_i]\in\VV$ for $i$ large enough, implying that $\VV$ is open. 
For $i$ large enough, by the definition of $C^\infty$ convergence, there is some $(m,2R,\lambda)$-p.p.q.i.
\[
h_i:(M,x)\rightarrowtail(M_i,x_i)
\] 
with
\[|\nabla^l(f-h_i^*f_i)|<\epsilon\] 
on $D_M(x,2R)$ for $0\le l\le m$. 
Since $D_M(y,R)\subset D_M(x,2R)$, the closure of $D_M(y,R)$ is contained in $\dom h_i$, so it follows that the restriction $h_i|_K$ of $h_i$ to some compact domain $K$ containing $D_M(y,R)$ is a $(m,R,\lambda)$-p.p.q.i.\ \[h:(M,y)\rightarrowtail(M_i,h_i(y))\] satisfying \[|\nabla^l(f-h_i^*f_i)|<\epsilon\] on $D_M(y,R)$ for $0\le l\le m$. Hence $[M_i,h_i(y),f_i]\in\UU$ and $[M_i,x_i,f_i]\in\VV$ for $i$ large enough.

The last part of the statement follows from the first part and \Cref{l: equiv rels on top sps}.
\end{proof}

In the case of the zero Hilbert space, $\fH=0$, there is a canonical identity of partitioned topological spaces, $\widehat{\MM}_*^n(\fH)\equiv\MM_*^n$, $[M,x,f]\equiv[M,x]$. Hence the following is a particular case of \Cref{l: the saturation of any open subset is open}.

\begin{cor}\label{c: the saturation of any open subset is open}
The saturation of any open subset of $\MM_*^n$ is open, and therefore the closure of any saturated subset of $\MM_*^n$ is saturated.
\end{cor}

Consider an arbitrary equivalence class $[M,f]\equiv\Iso(M,f)\backslash M$. The Riemannian metric $d_M$ induces a distance function
\[d([M,x,f],[M,y,f])=\inf\{d_M(u,v)\mid u,v\in M,\ [M,x,f]=[M,u,f],\ [M,y,f]= [M,v,f]\}\]
on $[M,f]\cong M/\Iso(M,f)$ (see e.g.~~\cite[{Thm.~2.1}]{Cagliari}).
Let $\hat d:(\widehat{\MM}_*^n)^2\to[0,\infty]$ be the metric with possible infinite values induced by $d_M$ on every equivalence class $[M,f]$, and equal to $\infty$ on non-related pairs.

\begin{lem}\label{l: hat d(cdot UU) is upper semicontinuous}
For every open $\UU\subset\widehat{\MM}_*^n$, the map $\hat d(\cdot,\UU):\widehat{\MM}_*^n\to[0,\infty]$ is upper semicontinuous.
\end{lem}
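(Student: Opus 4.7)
The plan is to show that $\{[M,x,f]\in\widehat{\MM}_*^n \mid \hat d([M,x,f],\UU)<\alpha\}$ is open for every $\alpha\in[0,\infty]$, which is the standard reformulation of upper semicontinuity. Given $[M,x,f]$ in this sublevel set, I would first unpack the definition of $\hat d$: since $\hat d$ is infinite on non-related pairs and is induced by $d_M$ on $[M,f]\equiv\Iso(M,f)\backslash M$, there must exist $y\in M$ with $[M,y,f]\in\UU$ and $d_M(x,y)<\alpha$. Fix an auxiliary $\beta$ with $d_M(x,y)<\beta<\alpha$.

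Next, I would exploit the openness of $\UU$ at $[M,y,f]$ to extract parameters $m_0\in\N$, $R_0,\epsilon_0>0$ and $\lambda_0>1$ with the standard property: any $[M'',y'',f'']$ admitting an $(m_0,R_0,\lambda_0)$-p.p.q.i.\ from $(M,y)$ under which $f$ and the pullback of $f''$ agree up to $\epsilon_0$ in the $C^{m_0}$ norm on $D_M(y,R_0)$ already lies in $\UU$. I would then choose $\lambda\in(1,\lambda_0]$ small enough that $\lambda\beta<\alpha$, and set $R=R_0+d_M(x,y)$, so that $D_M(y,R_0)\subset D_M(x,R)$.

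Now consider any sequence $[M_i,x_i,f_i]\to[M,x,f]$. By the definition of $C^\infty$ convergence, for all large $i$ there are $(m_0,R,\lambda)$-p.p.q.i.'s $h_i:(M,x)\rightarrowtail(M_i,x_i)$ with $|\nabla^l(f-h_i^*f_i)|<\epsilon_0$ on $D_M(x,R)$ for $0\le l\le m_0$. Put $y_i=h_i(y)$. By \Cref{p: qi composition}, restricting $h_i$ produces a pointed $(m_0,R_0,\lambda_0)$-p.p.q.i.\ $(M,y)\rightarrowtail(M_i,y_i)$, so the chosen neighborhood criterion forces $[M_i,y_i,f_i]\in\UU$. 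Because $h_i$ is a $\lambda$-quasi-isometry on a neighborhood of $D_M(x,R)$ and $d_M(x,y)<R$, the image of a minimizing geodesic from $x$ to $y$ yields $d_{M_i}(x_i,y_i)\le\lambda\,d_M(x,y)\le\lambda\beta<\alpha$. Hence $\hat d([M_i,x_i,f_i],\UU)\le d_{M_i}(x_i,y_i)<\alpha$ for all large $i$, which proves that the sublevel set is open.

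The argument is essentially a quantitative refinement of the proof of \Cref{l: the saturation of any open subset is open}; the only subtlety, and the place where care is needed, is to choose $\lambda>1$ close enough to $1$ that the factor $\lambda$ introduced by the quasi-isometric distortion still leaves $\lambda\beta<\alpha$, while simultaneously remaining within the parameter window $(m_0,R_0,\lambda_0,\epsilon_0)$ dictated by the openness of $\UU$ at $[M,y,f]$. No deeper difficulty is expected.
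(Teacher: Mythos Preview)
Your proof is correct and follows essentially the same route as the paper's: pick $y\in M$ with $[M,y,f]\in\UU$ and $d_M(x,y)$ small, use the openness of $\UU$ at $[M,y,f]$ to fix p.p.q.i.\ parameters, pull the approximating $h_i$'s centered at $x$ back to $y$ via the containment $D_M(y,R_0)\subset D_M(x,R)$, and control $d_{M_i}(x_i,h_i(y))$ by the quasi-isometry factor $\lambda$. The only cosmetic point is that \Cref{p: qi composition} as stated concerns compositions rather than recentred restrictions; the restriction claim you need follows directly from the definition of an $(m,R,\lambda)$-p.p.q.i.\ (and is exactly what the paper uses when it invokes $D_M(y,R)\subset D_M(x,2R)$).
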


\begin{proof}
To prove the upper semicontinuity of $\hat d(\cdot,\UU)$ at any point $[M,x,f]$, we can assume that $\hat d([M,x,f],\UU)<\infty$, and therefore there is some $y\in M$ such that $[M,y,f]\in\UU$. Take a convergent sequence $[M_i,x_i,f_i]\to[M,x,f]$ in $\widehat{\MM}_*^n$, and let $\epsilon>0$. We can also suppose that
\[
\hat d([M,x,f],[M,y,f])<\hat d([M,x,f],\UU)+\epsilon/3\;,\quad d_M(x,y)<\hat d([M,x,f],[M,y,f])+\epsilon/3\;.
\]
Since $\UU$ is open, there are $m\in\N$, $R>d_M(x,y)+\epsilon$, $1<\lambda<(d_M(x,y)+\epsilon/3)/d_M(x,y)$ and $0<\delta<\epsilon$ so that, for all $[M',y',f']\in\widehat{\MM}_*^n$, if there is an $(m,R,\lambda)$-p.p.q.i.\ $h:(M,y)\rightarrowtail(M',y')$ with $|\nabla^l(f-h^*f')|<\delta$ on $D_M(y,R)$ for $0\le l\le m$, then $[M',y',f']\in\UU$. By the convergence $[M_i,x_i,f_i]\to[M,x,f]$, for $i$ large enough, there is some $(m,2R,\lambda)$-p.p.q.i.\ $h_i:(M,x)\rightarrowtail(M_i,x_i)$ with $|\nabla^l(f-h_i^*f_i)|<\delta$ on $D_M(x,2R)$ for $0\le l\le m$. Since $D_M(y,R)\subset D_M(x,2R)$, it follows that $[M_i,y_i,f_i]\in\UU$ for $y_i=h_i(y)$, and
\[
\hat d([M_i,x_i,f_i],[M_i,y_i,f_i])\le d_i(x_i,y_i)\le\lambda d_M(x,y)<d_M(x,y)+\epsilon/3<\hat d([M,x,f],\UU)+\epsilon\;.
\]
Hence $\hat d([M_i,x_i,f_i],\UU)<\hat d([M,x,f],\UU)+\epsilon$ for $i$ large enough.
\end{proof}

It is said that $(M,f)$ (or $f$) is (\emph{locally}) \emph{non-periodic} (or (\emph{locally}) \emph{aperiodic}) if $\hat\iota_{M,f}$ is (locally) injective; i.e., aperiodicity means $\Iso(M,f)=\{\id_M\}$, and local aperiodicity means that the canonical projection $M\to\Iso(M,f)\backslash M$ is a covering map. More strongly, $(M,f)$ (or $f$) is said to be \emph{limit aperiodic} if $(M',f')$ is aperiodic for all $[M',x',f']\in\overline{[M,f]}$. On the other hand, $(M,f)$ (or $f$) is said to be \emph{repetitive} if, given any $p\in M$, for all $m\in\N$, $R,\epsilon>0$ and $\lambda>1$, the points $x\in M$ such that
\begin{equation}\label{M is repetitive}
\exists\ \text{an $(m,R,\lambda)$-p.p.q.i.}\ h\colon (M,p)\rightarrowtail (M,x)\ \text{with}\ |\nabla^l(f-h^*f)|<\epsilon\ \text{on}\ D_M(p,R)\ \forall l\le m
\end{equation}
form a relatively dense subset of $M$. This property is independent of the choice of $p$; this can be checked by hand or, when $\overline{[M,f]}$ is compact---which is the case that we will consider in this paper---, it is a consequence of the following proposition.

\begin{prop}\label{p: M is repetitive <=> overline im hat iota_M f is minimal}
The following holds for any connected complete Riemannian $n$-manifold $M$:
\begin{enumerate}[{\rm(i)}]

\item\label{i: (M f) is repetitive => overline im hat iota_M f is minimal} If $(M,f)$ is repetitive, then $\overline{[M,f]}$ is minimal.

\item\label{i: overline im hat iota_M f is compact and minimal => (M f) is repetitive} If $\overline{[M,f]}$ is compact and minimal, then $(M,f)$ is repetitive.

\end{enumerate}
\end{prop}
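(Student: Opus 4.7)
The plan is to argue both implications directly, working with the canonical basic open neighborhoods in $\widehat{\MM}_*^n$. For $p\in M$ and parameters $(m,R,\lambda,\epsilon)$, write $V=V(m,R,\lambda,\epsilon,p)$ for the open set of classes $[M'',x'',f'']$ admitting an $(m,R,\lambda)$-p.p.q.i.\ $h\colon(M,p)\rightarrowtail(M'',x'')$ with $|\nabla^l(f-h^*f'')|<\epsilon$ on $D_M(p,R)$ for $0\le l\le m$. Its preimage $\hat\iota_{M,f}^{-1}(V)$ is exactly the set $A$ of $x\in M$ satisfying~\eqref{M is repetitive}, and because $\hat d$ is infinite between distinct orbits one has $\hat d([M,x,f],V)=d_M(x,A)$ for every $x\in M$. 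So repetitivity of $(M,f)$ amounts to $d_M(\cdot,A)$ being bounded on $M$ for every such $V$.

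For~(ii), assume $\overline{[M,f]}$ is compact and minimal, and fix $V$ as above. Since $V$ meets the orbit $[M,f]$ at $[M,p,f]$, minimality forces $V$ to intersect every orbit in $\overline{[M,f]}$, so $\hat d(\cdot,V)<\infty$ on $\overline{[M,f]}$. By Lemma~\ref{l: hat d(cdot UU) is upper semicontinuous} each set $\{\hat d(\cdot,V)<n\}$ is open; these form an increasing open cover of the compact space $\overline{[M,f]}$, and a single member already covers it, giving $N$ with $d_M(\cdot,A)<N$ on all of $M$. Hence $A$ is $N$-relatively dense, proving repetitivity.

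For~(i), assume $(M,f)$ is repetitive. By density of $[M,f]$ in $\overline{[M,f]}$, it suffices to show that for every $[M',x',f']\in\overline{[M,f]}$, every $[M,p,f]\in[M,f]$, and every basic open neighborhood $V=V(m,R,\lambda,\epsilon,p)$ the orbit $[M',f']$ meets $V$. Choose strict parameters $(m,R',\lambda',\epsilon')$ with $R'>R$, $1<\lambda'<\sqrt\lambda$ and $\epsilon'<\epsilon/2$, defining $V'$ and $A'=\hat\iota_{M,f}^{-1}(V')$; by repetitivity $A'$ is $C'$-relatively dense for some $C'$. From the convergence $[M,x_i,f]\to[M',x',f']$ obtain $(m_i,R_i,\lambda_i)$-p.p.q.i.\ $h_i\colon(M',x')\rightarrowtail(M,x_i)$ with $m_i\to\infty$, $R_i\to\infty$, $\lambda_i\to 1$ and $|\nabla^l(f'-h_i^*f)|<\delta_i\to 0$ on $D_{M'}(x',R_i)$. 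Pick $y_i\in A'\cap D_M(x_i,C')$; for $i$ large $y_i\in\im h_i$, so set $z_i=h_i^{-1}(y_i)\in M'$, with $d_{M'}(x',z_i)\le\lambda_iC'$. Repetitivity also provides an $(m,R',\lambda')$-p.p.q.i.\ $g_i\colon(M,p)\rightarrowtail(M,y_i)$ with $|\nabla^l(f-g_i^*f)|<\epsilon'$ on $D_M(p,R')$. Once $C'+\lambda'R\le\lambda_i^{-1}R_i$ the composition $\phi_i=h_i^{-1}\circ g_i\colon(M,p)\rightarrowtail(M',z_i)$ is well-defined, and the three-manifold analogue of Proposition~\ref{p: qi composition} makes it an $(m,R,\lambda'\lambda_i)$-p.p.q.i.; transferring $|\nabla^l(f'-h_i^*f)|<\delta_i$ through $g_i$ yields
\[
|\nabla^l(f-\phi_i^*f')|<\epsilon'+C''\delta_i\quad\text{on }D_M(p,R)\text{ for }0\le l\le m,
\]
with $C''=C''(\lambda',m)$ independent of $i$. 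For $i$ large $\lambda'\lambda_i<\lambda$ and $\epsilon'+C''\delta_i<\epsilon$, placing $[M',z_i,f']\in V$, as required.

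The main technical obstacle is this composition estimate in~(i): Proposition~\ref{p: qi composition} is stated inside a single manifold and must be extended verbatim to maps between three manifolds, and the additive error bound $\epsilon'+C''\delta_i$ requires transferring the $C^m$-approximation $h_i^*f\approx f'$ through the pullback by $g_i$, whose $C^m$-derivatives are uniformly controlled by the $(m,R',\lambda')$-p.p.q.i.\ condition. The remainder of~(i) is a pigeonhole using relative density of $A'$ inside the expanding image of $h_i$, and~(ii) is essentially a soft compactness-plus-upper-semicontinuity argument once repetitivity has been recast in terms of the neighborhoods $V$.
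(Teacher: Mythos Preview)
Your proof is correct and follows essentially the same route as the paper: for~(ii) both invoke the upper semicontinuity of $\hat d(\cdot,V)$ together with compactness to obtain a uniform bound, and for~(i) both compose a p.p.q.i.\ coming from repetitivity with the inverse of one coming from $[M',x',f']\in\overline{[M,f]}$, controlling the pullback error exactly as you indicate with your constant $C''$. The only cosmetic difference is that the paper fixes explicit constants ($\lambda^{1/2}$, $\epsilon/2$, $\epsilon/3$, and the bound $|\nabla^l h_x^*\phi|<\tfrac32\,h_x^*|\nabla^l\phi|$) in place of your limiting argument with $i\to\infty$.
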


\begin{proof}
By \Cref{l: the saturation of any open subset is open}, $\overline{[M,f]}$ is saturated, and therefore its minimality can be considered.

\Cref{i: (M f) is repetitive => overline im hat iota_M f is minimal} follows by showing that $[M,f]\subset\overline{[M',f']}$ for every equivalence class $[M',f']\subset\overline{[M,f]}$. In fact, it is enough to prove that $[M,f]\cap\overline{[M',f']}\ne\emptyset$ because $\overline{[M',f']}$ is saturated. Fix any $p\in M$, and let $m\in\N$, $R,\epsilon>0$ and $\lambda>1$. By the repetitiveness of $(M,f)$, for some $c>0$, there is a $c$-relatively dense subset $X\subset M$ such that, for all $x\in X$, there is an $(m,R,\lambda^{1/2})$-p.p.q.i.\ $h_x:(M,p)\rightarrowtail(M,x)$ with $|\nabla^l(f-h_x^*f)|<\epsilon/2$ and $|\nabla^lh_x^*\phi|<\frac{3}{2}h_x^*|\nabla^l\phi|$ on $D_M(x,R)$ for $0\le l\le m$ and $\phi\in C^\infty(M)$. On the other hand, since $[M',f']\subset\overline{[M,f]}$, given any $y'\in M'$, there are some $y\in M$ and an $(m,\lambda^{1/2}c+\lambda R,\lambda^{1/2})$-p.p.q.i.\ $h:(M',y')\rightarrowtail(M,y)$ so that $|\nabla^l(f-(h^{-1})^*f')|<\epsilon/3$ on $h(D_{M'}(x,\lambda^{1/2}c+\lambda R))$ for $0\le l\le m$. Take some $x\in X$ with $d_M(x,y)\le c$. We have $D_M(y,c)\subset h(D_{M'}(y',\lambda^{1/2}c))$, and therefore there is some $x'\in D_{M'}(y',\lambda^{1/2}c)$ with $h(x')=x$. By \Cref{p: qi composition}, the composite $h^{-1}h_x$ defines an $(m,R,\lambda)$-p.p.q.i.\ $(M,p)\rightarrowtail(M',x')$. Moreover
\begin{align*}
|\nabla^l(f-(h^{-1}h_x)^*f')|&\le|\nabla^l(f-h_x^*f)|+|\nabla^l(h_x^*f-(h^{-1}h_x)^*f')|\\
&\le|\nabla^l(f-h_x^*f)|+\frac{3}{2}h_x^*|\nabla^l(f-(h^{-1})^*f')|<\frac{\epsilon}{2}+\frac{3}{2}\frac{\epsilon}{3}=\epsilon
\end{align*}
on $D_M(p,R)$ for $0\le l\le m$. Since $m$, $R$, $\epsilon$ and $\lambda$ are arbitrary, we get $[M,p,f]\in[M,f]\cap\overline{[M',f']}$.

To prove~\ref{i: overline im hat iota_M f is compact and minimal => (M f) is repetitive}, fix any $p\in M$, and take $m\in\N$, $R,\epsilon>0$ and $\lambda>1$. The set
\begin{gather*}
\UU=\{\,[M',x',f']\in\widehat{\MM}_*^n\mid\exists\ \text{an $(m,R,\lambda)$-p.p.q.i.}\ h\colon (M,p)\rightarrowtail (M',x')\\
\text{with}\ |\nabla^l(f-h^*f')|<\epsilon\ \text{on}\ D_M(p,R)\ \forall l\le m\,\}
\end{gather*}
is an open neighborhood of $[M,p,f]$ in $\widehat{\MM}_*^n$. By \Cref{l: hat d(cdot UU) is upper semicontinuous}, and the compactness and minimality of $\overline{[M,f]}$, we have $\hat d(\cdot,\UU)\le c$ on $\widehat{\MM}_*^n$ for some $c>0$. It follows that the points $x\in M$ satisfying~\eqref{M is repetitive} form a $c$-relatively dense subset of $M$. Since $m$, $R$, $\epsilon$ and $\lambda$ are arbitrary, we get that $(M,f)$ is repetitive.
\end{proof}

The non-periodic and locally non-periodic pairs $(M,f)$ define saturated subspaces $\widehat{\MM}_{*,\text{\rm np}}^n\subset\widehat{\MM}_{*,\text{\rm lnp}}^n\subset\widehat{\MM}_*^n$.  The pairs $(M,f)$, where $f$ is an immersion, define a saturated Polish subspace $\widehat{\mathcal M}_{*,\text{imm}}^n\subset\widehat{\mathcal M}_{*,\text{lnp}}^n$. The following properties hold \cite[Theorem~1.4]{AlvarezBarral2017}:
\begin{itemize}

\item $\widehat{\MM}_{*,\text{\rm imm}}^n$ is open and dense in $\widehat{\MM}_*^n$.

\item $\widehat{\MM}_{*,\text{\rm imm}}^n$ is a foliated space with the restriction of the canonical partition.

\item The foliated space $\widehat{\MM}_{*,\text{\rm imm}}^n$ has  unique $C^\infty$ structure such that $\ev:\widehat{\MM}_*^n\to\fH$ is $C^\infty$. Furthermore $\hat\iota_{M,f}:M\to\widehat{\MM}_*^n$ is also $C^\infty$ for all pairs $(M,f)$ where $f$ is an immersion.

\item Every map $\hat\iota_{M,f}:M\to[M,f]\equiv\Iso(M,f)\backslash M$ is the holonomy covering of the leaf $[M,f]$. Thus $\widehat\MM_{*,\text{\rm np}}^n\cap\widehat{\MM}_{*,\text{\rm imm}}^n$ is the union of leaves without holonomy.

\item The $C^\infty$ foliated space $\widehat{\MM}_{*,\text{\rm imm}}^n$ has a Riemannian metric so that every map $\hat\iota_{M,f}:M\to[M,f]\equiv\Iso(M,f)\backslash M$ is a local isometry. 

\end{itemize}

By forgetting the functions $f$, we recover the Polish space $\MM_*^n$ \cite[Theorem~1.2]{AlvarezBarralCandel2016}. We have $\MM_*^n\equiv\widehat{\MM}_*^n(0)$, using the zero Hilbert space. The forgetful or underlying map $\mathfrak u:\widehat{\MM}_*^n\to\MM_*^n$, $\mathfrak u([M,x,f])=[M,x]$, is continuous. We also have the canonical partition defined by the images $[M]$ of canonical continuous maps $\iota_M:M\to\MM_*^n$, so the following properties hold for $n\ge2$ \cite[Theorem~1.3]{AlvarezBarralCandel2016}:
\begin{itemize}

\item $\MM_{*,\text{\rm lnp}}^n$ is open and dense in $\MM_*^n$.

\item $\MM_{*,\text{\rm lnp}}^n$ is a foliated space with the restriction of the canonical partition.

\item The foliated space $\MM_{*,\text{\rm lnp}}^n$ has a unique $C^\infty$ and Riemannian structures such that every map $\iota_M:M\to[M]\equiv\Iso(M)\backslash M$ is a local isometry. Furthermore $\mathfrak u:\widehat{\MM}_{*,\text{\rm imm}}^n\to\MM_{*,\text{\rm lnp}}^n$ is a $C^\infty$ foliated map.

\item Every map $\iota_M:M\to[M]\equiv\Iso(M)\backslash M$ is the holonomy covering of the leaf $[M]$. Thus $\MM_{*,\text{\rm np}}^n$ is the union of leaves without holonomy.

\end{itemize}
Moreover $\overline{[M]}$ is compact if and only if $M$ is of bounded geometry \cite[Theorem~12.3]{AlvarezBarralCandel2016} (see also \cite{Cheeger1970}, \cite[Chapter~10, Sections~3 and~4]{Petersen1998}).

Now consider quadruples $(M,x,f,v)$, where $(M,x,f)$ is like in the definition of $\widehat\MM^n_*$ and $v\in T_xM$. An equivalence between such quadruples, $(M,x,f,v)\sim(M',x',f',v')$, means that there is an isometry $h:M\to M'$ defining an equivalence $(M,x,f)\sim(M',x',f')$ with $h_*v=v'$. The corresponding equivalence classes, denoted by $[M,x,f,v]$, define a set $\TT\widehat\MM^n_*$, like in the case of $\widehat\MM^n_*$. Moreover the \emph{$C^\infty$ convergence} $[M_i,x_i,f_i,v_i]\to[M,x,f,v]$ in $\TT\widehat\MM^n_*$ means that, for all $m\in\N$, $R,\epsilon>0$ and $\lambda>1$, there is an $(m,R,\lambda)$-p.p.q.i.\ $h_i:(M,x)\rightarrowtail (M_i,x_i)$, for $i$ large enough, such that $|\nabla^l(f-h_i^*f_i)|<\epsilon$ on $D_M(x,R)$ for $0\le l\le m$ and $(h_i^{-1})_*v_i\to v$. Like in the case of $\widehat\MM^n_*$, it can be proved that this convergence defines a Polish topology on $\TT\widehat\MM^n_*$. Moreover there are continuous maps $T\hat\iota_{M,f}:T^*M\to\TT^*\widehat\MM^n_*$, defined by $T\hat\iota_{M,f}(x,v)=[M,x,f,v]$, whose images $\TT[M,f]$ form a canonical partition of $\TT\widehat\MM^n_*$ satisfying the same basic properties as the canonical partition of $\widehat\MM^n_*$. We also have a continuous forgetful or underlying map $\mathfrak u:\TT\widehat\MM^n_*\to\widehat\MM_*^n$ given by $\mathfrak u([M,x,f,v])=[M,x,f]$.

The above definition can be modified in obvious ways, giving rise to other partitioned spaces with the same basic properties. For instance, by using cotangent spaces $T^*_xM$ instead of the tangent spaces $T_xM$, we get a partitioned space $\TT^*\widehat\MM^n_*$, where the partition is defined by the images $\TT^*[M,f]$ of maps $T^*\hat\iota_{M,f}:T^*M\to\TT^*\widehat\MM^n_*$, given by $T^*\hat\iota_{M,f}(x,\xi)=[M,x,f,\xi]$. Actually, the metrics of the manifolds $M$ define  identities $T_xM\equiv T^*_xM$, yielding an identity $\TT\widehat\MM^n_*\equiv\TT^*\widehat\MM^n_*$. Next, for $k\in\N$, we can also use the tensor products $\bigotimes_kT_xM$ or $\bigotimes_kT^*_xM$, giving rise to partitioned spaces $\bigotimes_k\TT\widehat\MM^n_*$ and $\bigotimes_k\TT^*\widehat\MM^n_*$. 

For a normed vector space $V$, or more generally a vector bundle $E$ with a continuous function that restricts to a norm on each fiber, and $r>0$, let 
\[
\DD_r V= \{v\in V\mid \| v\|\leq r \}\;,\quad \DD_r E= \{e\in E\mid \| r\|\leq r \}\;.
\]
Hence, we obtain a fiber bundle $\DD_r(T_xM)\subset T_xM$ with compact fibers, and also the partitioned subspaces $\DD_r(\TT\widehat\MM^n_*)$ of $\TT\widehat\MM^n_*$. Similarly, we get partitioned subspaces $\DD_r(\TT^*\widehat\MM^n_*)$, $\DD_r(\bigotimes_k\TT\widehat\MM^n_*)$ and $\DD_r(\bigotimes_k\TT^*\widehat\MM^n_*)$, consisting on the vectors in the disks of center zero and radius $r$ on $\TT^*\widehat\MM^n_*$, $\bigotimes_k\TT\widehat\MM^n_*$ and $\bigotimes_k\TT^*\widehat\MM^n_*$, respectively. A  continuous forgetful or underlying map $\mathfrak u$ is defined in all of these spaces with values in $\widehat\MM_*^n$. We will use the notation $\mathfrak u_{r,k}=\mathfrak u:\DD_r(\bigotimes_k\TT\widehat\MM^n_*)\to\widehat\MM_*^n$.

\begin{prop}\label{p: mathfrak u is proper}
The map $\mathfrak u_{r,k}:\DD_r(\bigotimes_k\TT\widehat\MM^n_*)\to\widehat\MM_*^n$ is proper.
\end{prop}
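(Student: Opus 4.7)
The plan is to prove properness via the sequential characterization: since $\widehat\MM^n_*$ and $\DD_r\bigotimes_k\TT\widehat\MM^n_*$ are Polish, and in particular metrizable, it suffices to show that whenever $y_i=[M_i,x_i,f_i,v_i]$ is a sequence in $\DD_r\bigotimes_k\TT\widehat\MM^n_*$ with $\mathfrak u_{r,k}(y_i)=[M_i,x_i,f_i]\to[M,x,f]$ in $\widehat\MM^n_*$, some subsequence of $y_i$ converges in $\DD_r\bigotimes_k\TT\widehat\MM^n_*$. This gives sequential, hence topological, compactness of preimages of compacta.

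First, I would perform a diagonal extraction of p.p.q.i.'s. By the definition of $C^\infty$ convergence in $\widehat\MM^n_*$, for every $j\in\N$ there is some $N_j$ such that, for each $i\ge N_j$, there exists a $(j,j,1+1/j)$-p.p.q.i.\ $h_i^{(j)}\colon(M,x)\rightarrowtail(M_i,x_i)$ with $|\nabla^l(f-(h_i^{(j)})^*f_i)|<1/j$ on $D_M(x,j)$ for $0\le l\le j$. I would choose $j(i)\to\infty$ so slowly that $i\ge N_{j(i)}$ for all $i$ and set $\tilde h_i:=h_i^{(j(i))}$, producing a single sequence of p.p.q.i.'s whose parameters improve enough to witness any desired convergence.

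The second step is to pull back the vectors. By \Cref{p: qi composition}, $(\tilde h_i^{-1})_*$ is a $(1+1/j(i))$-quasi-isometry on a neighbourhood of $x_i$, so $w_i:=((\tilde h_i^{-1})_*)^{\otimes k}v_i\in\bigotimes_k T_xM$ satisfies $|w_i|\le(1+1/j(i))^k r$. Since $\bigotimes_k T_xM$ is a finite-dimensional normed space, the bounded sequence $(w_i)$ admits a subsequence $w_i\to w$ with $|w|\le r$; hence the candidate limit $[M,x,f,w]$ lies in $\DD_r\bigotimes_k\TT\widehat\MM^n_*$. Finally, convergence $y_i\to[M,x,f,w]$ along this subsequence is checked directly: given any $m\in\N$, $R,\epsilon>0$ and $\lambda>1$, for $i$ large enough $j(i)\ge\max\{m,R,\lceil 1/\epsilon\rceil\}$ and $1+1/j(i)\le\lambda$, so $\tilde h_i$ is in particular an $(m,R,\lambda)$-p.p.q.i.\ with $|\nabla^l(f-\tilde h_i^*f_i)|<\epsilon$ on $D_M(x,R)$ for $l\le m$, while $((\tilde h_i^{-1})_*)^{\otimes k}v_i=w_i\to w$.

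The main, and essentially only, obstacle is the bookkeeping in the diagonal extraction: one must use the \emph{same} $\tilde h_i$ to define $w_i$ as is later used to witness convergence, so that the chosen limit $w$ works uniformly across all test parameters $(m,R,\epsilon,\lambda)$. Once that is arranged, the argument reduces to the finite-dimensionality of $\bigotimes_k T_xM$ together with the quasi-isometric bound $|w_i|\le(1+1/j(i))^k r$, which is exactly why truncation to the disk subbundle $\DD_r$ is needed for properness.
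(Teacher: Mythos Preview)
Your proposal is correct and follows essentially the same approach as the paper: pass to a subsequence so that the underlying triples converge, use the witnessing p.p.q.i.'s to pull back the vectors $v_i$ to the fixed finite-dimensional space $\bigotimes_k T_xM$, bound them by $\lambda_i^k r$, and extract a convergent subsequence by Bolzano--Weierstrass. The paper simply writes ``there are sequences $m_i\uparrow\infty$, $R_i\uparrow\infty$, $\epsilon_i\downarrow0$, $\lambda_i\downarrow1$'' in place of your explicit diagonal extraction, but the content is the same; your remark about reusing the same $\tilde h_i$ for both defining $w_i$ and witnessing the convergence is exactly the point.
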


\begin{proof}
For any compact subset $\KK\subset\widehat\MM_*^n$, take a sequence $[M_i,x_i,f_i,v_i]$ in $(\mathfrak u_{r,k})^{-1}(\KK)$. Since $\KK$ is compact, after taking a subsequence if necessary, we can assume that $[M_i,x_i,f_i]$ converges to some element $[M,x,f]$ in $\KK$. Thus there are sequences, $m_i\uparrow\infty$ in $\N$, $0<R_i\uparrow\infty$, $0<\epsilon_i\downarrow0$ and $1<\lambda_i\downarrow1$, such that, for every $i$, there is some an $(m_i,R_i,\lambda_i)$-p.p.q.i.\ $h_i:(M,x)\rightarrowtail (M_i,x_i)$ with $|\nabla^l(f-h_i^*f_i)|<\epsilon_i$ on $D_M(x,R_i)$ for $0\le l\le m_i$. Since $\lambda_i^{-k}\le|(h_i^{-1})_*v_i|\le\lambda_i^k$ for all $i$, some subsequence $(h_{i_k}^{-1})_*v_{i_k}$ is convergent in $\bigotimes_kT^*_xM$ to some $v$ with $|v|\le r$. Using $h_{i_k}$, it follows that the subsequence $[M_{i_k},x_{i_k},f_{i_k},v_{i_k}]$ converges to $[M,x,f,v]$ in $(\mathfrak u_{r,k})^{-1}(\KK)$, showing that $(\mathfrak u_{r,k})^{-1}(\KK)$ is compact.
\end{proof}

For all $k\in\N$, a well-defined continuous map $\nabla^k:\bigotimes_k\TT\widehat\MM^n_*\to\fH$ is given by $\nabla^k([M,x,f,v])=(\nabla^kf)(x,v)$.

\begin{prop}\label{p: overline [M f] is compact}
Let $M$ be a complete connected Riemannian $n$-manifold, and let $f\in C^\infty(M,\fH)$, $x_0\in M$ and $r>0$. Then $\overline{[M,f]}$ is compact if and only if $M$ is of bounded geometry and $\nabla^k((\mathfrak u_{r,k})^{-1}([M,f]))$ is precompact in $\fH$ for all $k\in\N$.
\end{prop}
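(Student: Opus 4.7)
The ``only if'' direction is short. Continuity of $\mathfrak u:\widehat{\MM}_*^n\to\MM_*^n$ makes $\mathfrak u(\overline{[M,f]})$ a compact subset of $\MM_*^n$; since it contains $\mathfrak u([M,f])=[M]$ and is contained in $\overline{[M]}$, we get $\mathfrak u(\overline{[M,f]})=\overline{[M]}$, so $\overline{[M]}$ is compact and $M$ has bounded geometry by the characterization already cited. For the second condition, \Cref{p: mathfrak u is proper} makes $(\mathfrak u_{r,k})^{-1}(\overline{[M,f]})$ compact, and its image under the continuous map $\nabla^k$ is a compact subset of $\fH$ that contains $\nabla^k((\mathfrak u_{r,k})^{-1}([M,f]))$, giving precompactness of the latter.

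For the converse, assume $M$ is of bounded geometry and each set $\nabla^k((\mathfrak u_{r,k})^{-1}([M,f]))$ is precompact in $\fH$. Since $\widehat{\MM}_*^n$ is Polish, it is enough to show that every sequence in $\overline{[M,f]}$ has a convergent subsequence. Using density of $[M,f]$ in $\overline{[M,f]}$, we can choose $y_i\in M$ so that $[M,y_i,f]$ approximates the $i$-th term within $1/i$ in a fixed compatible metric, reducing the task to sequences of the form $[M,y_i,f]$. Compactness of $\overline{[M]}$ allows us to extract a subsequence with $[M,y_{i_k}]\to[M',x']$ in $\MM_*^n$, witnessed by $(m_k,R_k,\lambda_k)$-p.p.q.i.'s $h_{i_k}:(M',x')\rightarrowtail(M,y_{i_k})$ with $m_k\to\infty$, $R_k\to\infty$ and $\lambda_k\to 1$. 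The remaining task is to pass to a further subsequence so that the pullbacks $f_k:=h_{i_k}^*f$ converge in $C^\infty$ on every compact subset of $M'$ to some $f':M'\to\fH$; once this is done, $[M,y_{i_k},f]\to[M',x',f']$ in $\widehat{\MM}_*^n$.

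The main technical step is verifying the two hypotheses of \Cref{c: SSS is compact} for the family $\{f_k\}$, viewed as sections of the trivial $\fH$-bundle over $M'$. Expanding $\nabla^l f_k$ via the Leibniz rule for pullback by a diffeomorphism expresses it as a sum of terms of the form $h_{i_k}^*(\nabla^j f)$ (for $j\le l$) contracted with derivatives of $h_{i_k}$ of order at most $l-j+1$; these derivative factors are uniformly bounded on any fixed compact subset once $k$ is large, by the p.p.q.i.\ conditions. Precompactness of $\nabla^j((\mathfrak u_{r,j})^{-1}([M,f]))$ in $\fH$ forces $\sup_M|\nabla^j f|_{\mathrm{op}}<\infty$ for every $j$ (since precompact subsets of $\fH$ are bounded, and by rescaling this bound is independent of $r$), whence uniform boundedness of $|\nabla^l f_k|$ on every compact subset of $M'$. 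For the precompactness of $\{(\nabla^l f_k)(x')\}$ in $\fH\otimes\bigotimes_l T^*_{x'}M'$, we evaluate on a fixed basis of $\bigotimes_l T_{x'}M'$: the leading contribution is of the form $(\nabla^l f)(y_{i_k})(h_{i_k *}^{\otimes l}v)$, with $h_{i_k *}^{\otimes l}v$ of uniformly bounded norm, and therefore lies in $\nabla^l((\mathfrak u_{r',l})^{-1}([M,f]))$ for a suitable $r'>0$ and is precompact by hypothesis; the lower-order terms are handled analogously, invoking precompactness at orders strictly less than $l$. A diagonal extraction over $l\in\N$ then produces the required subsequence, and \Cref{c: SSS is compact} yields the $C^\infty$ convergence $f_k\to f'$ needed to conclude.
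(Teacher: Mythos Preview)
Your proof is correct and follows essentially the same route as the paper's. The ``only if'' direction is identical. For the ``if'' direction, both proofs reduce to sequences $[M,y_i,f]$ in $[M,f]$, use compactness of $\overline{[M]}$ to extract a subsequential limit $[M',x']$ in $\MM_*^n$, and then apply \Cref{c: SSS is compact} to the pulled-back functions $h_{i_k}^*f$ to obtain $C^\infty$ convergence to some $f'$. One small slip: the diagonal extraction should be over an exhaustion of $M'$ by compact domains (since each $f_k$ is only defined on $D_{M'}(x',R_k)$, not on all of $M'$), not over the order $l$ of differentiation; once hypotheses~(i) and~(ii) of \Cref{c: SSS is compact} are verified for all $l$ on a fixed compact domain, the corollary already yields precompactness in $C^\infty$ there without further diagonalization in $l$.
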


\begin{proof}
Assume that $\overline{[M,f]}$ is compact to prove the ``only if'' part. The map $\mathfrak u:\widehat{\MM}_{*}^n\to\MM_*^n$ defines a map $\mathfrak u:\overline{[M,f]}\to\overline{[M]}$ with dense image because $\iota_M=\mathfrak u\hat\iota_{M,f}$. Moreover, $\mathfrak u $ is continuous, so the image of $\overline{[M,f]}$ must be a dense compact subset of $\overline{[M]}$; since $\overline{[M]}$ is closed, it follows that $\mathfrak u$ is surjective and $\overline{[M]}$ is compact. So $M$ is of bounded geometry~\cite[Theorem~12.3]{AlvarezBarralCandel2016}. Furthermore $\mathfrak u_{r,k}^{-1}(\overline{[M,f]})$ is compact because $\mathfrak u_{r,k}$ is proper, so $\nabla^k((\mathfrak u_{r,k})^{-1}([M,f]))$ is precompact  in $\fH$ for all $k\in\N$.

The ``if'' part follows by showing that any sequence $[M,f,x_p]$ in $[M,f]$ has a subsequence that is convergent in $\widehat{\MM}_*^n$. Since $\overline{[M]}$ is compact and $\mathfrak u:\widehat{\MM}_*^n\to\MM_*^n$ continuous, we can suppose that $[M,x_p]$ converges to some point $[M',x']$ in $\MM_*^n$. Take a sequence of compact domains $D_q$ in $M'$ such that $B_{M'}(x',q+1)\subset D_q$. For every $q$, there are pointed $C^\infty$ embeddings $h_{q,p}:(D_q,x')\to(M,x_p)$, for $p$ large enough, such that $h_{q,p}^*g_M\to g_N$ on $D_q$ as $p\to\infty$ with respect to the $C^\infty$ topology. Let $f'_{q,p}=h_{q,p}^*f$ on $D_q$. From the compactness of $\nabla^k((\mathfrak u_{r,k})^{-1}(\overline{[M,f]}))$, it easily follows that, for every $q$ and $k$, we have $\sup_p\sup_{D_q}|\nabla^kf'_{q,p}|<\infty$, and the elements $(\nabla^mf'_{q,p})(x'',v'')$ form a precompact subset of $\fH$ for any fixed $x''\in D_q$ and $v''\in \DD_r(\bigotimes_kT_{x'}M')$. Since $\bigotimes_kT_{x''}M'$ is of finite dimension, it follows that the elements $(\nabla^mf'_{q,p})(x'')$ form a precompact subset of $\fH\otimes \bigotimes_kT^*_{x''}M'$. Hence the functions $f'_{q,p}$ form a precompact subset of $C^\infty(D_q,\fH)$ with the $C^\infty$ topology by \Cref{p: SSS is compact}. So some subsequence $f'_{q,p(q,\ell)}$ is convergent to some $f'_q\in C^\infty(D_q,\fH)$ with respect to the $C^\infty$ topology. In fact, arguing inductively on $q$, it is easy to see that we can assume that each $f'_{q+1,p(q+1,\ell)}$ is a subsequence of $f'_{q,p(q,\ell)}$, and therefore $f'_{q+1}$ extends $f'_q$. Thus the functions $f'_q$ can be combined to define a function $f'\in C^\infty(M',\fH)$. Take sequences $\ell_q,m_q\uparrow\infty$ in $\N$ so that
\[
\|f'-h_{q,p(q,\ell_q)}^*f\|_{C^{m_q},D_q,g_N}=\|f'_q-f'_{q,p(q,\ell_q)}\|_{C^{m_q},D_q,g_N}\to0\;.
\]
Hence $[M,f,x_{p(q,\ell_q)}]\to[M',f',x']$ in $\widehat\MM_*^n$ as $q\to\infty$.
\end{proof}

The following is an elementary consequence of \Cref{p: overline [M f] is compact}.

\begin{cor}\label{c: overline [M f] is compact}
Let $M$ be a complete connected Riemannian $n$-manifold, and let $f\in C^\infty(M,\fH)$. Suppose that $\dim\fH<\infty$. Then $\overline{[M,f]}$ is compact if and only if $M$ is of bounded geometry and $\sup_M|\nabla^mf|<\infty$ for all $m\in\N$.
\end{cor}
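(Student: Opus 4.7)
The plan is to deduce this immediately from \Cref{p: overline [M f] is compact}, which already characterizes compactness of $\overline{[M,f]}$ as bounded geometry of $M$ together with precompactness of $\nabla^k\bigl((\mathfrak u_{r,k})^{-1}([M,f])\bigr)\subset\fH$ for every $k\in\N$. So the only work is to show that, under the hypothesis $\dim\fH<\infty$, this precompactness condition is equivalent to $\sup_M|\nabla^mf|<\infty$ for all $m\in\N$.

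The key point is that in a finite-dimensional Hilbert space a subset is precompact if and only if it is bounded. Thus it suffices to identify the boundedness of $\nabla^k\bigl((\mathfrak u_{r,k})^{-1}([M,f])\bigr)$ with $\sup_M|\nabla^kf|<\infty$. By the very definition of $\nabla^k$ on $\DD_r\bigotimes_k\TT\widehat\MM^n_*$, an element of $\nabla^k\bigl((\mathfrak u_{r,k})^{-1}([M,f])\bigr)$ has the form $(\nabla^kf)(x)(v)\in\fH$ for some $x\in M$ and $v\in\bigotimes_kT_xM$ with $|v|\le r$. Hence, via the operator norm,
\[
\sup\bigl\{\,|\nabla^k([M,x,f,v])|\bigm|[M,x,f,v]\in(\mathfrak u_{r,k})^{-1}([M,f])\,\bigr\}=r\,\sup_{x\in M}|(\nabla^kf)(x)|\,.
\]

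From this identity, the ``if'' direction is immediate: if $\sup_M|\nabla^kf|<\infty$ for every $k$, then each $\nabla^k\bigl((\mathfrak u_{r,k})^{-1}([M,f])\bigr)$ is bounded, hence precompact in the finite-dimensional space $\fH$, so \Cref{p: overline [M f] is compact} applies. Conversely, if $\overline{[M,f]}$ is compact, then $M$ is of bounded geometry and each $\nabla^k\bigl((\mathfrak u_{r,k})^{-1}([M,f])\bigr)$ is precompact, hence bounded in $\fH$, and the displayed identity yields $\sup_M|\nabla^kf|<\infty$ for every $k\in\N$. I do not anticipate any real obstacle; the only subtlety is remembering that finite-dimensionality of $\fH$ is exactly what collapses ``precompact'' to ``bounded'', which is why the simplified statement fails in the general Hilbert setting treated by \Cref{p: overline [M f] is compact}.
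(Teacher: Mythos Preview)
Your proposal is correct and matches the paper's approach: the paper simply states that the corollary is ``an elementary consequence of \Cref{p: overline [M f] is compact}'' without spelling out the details, and you have supplied exactly those details---reducing precompactness in $\fH$ to boundedness via $\dim\fH<\infty$, and identifying that boundedness with $\sup_M|\nabla^kf|<\infty$. One cosmetic remark: the displayed identity with ``$=$'' is literally correct only if $|(\nabla^kf)(x)|$ denotes the operator norm; in the paper's conventions it is the tensor (Hilbert--Schmidt) norm, so the two sides agree only up to a dimensional constant depending on $n$ and $k$---but since you only need the equivalence of boundedness, this does not affect the argument.
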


\begin{cor}\label{c: overline [M f]  is compact <=> overline [M f_1] and overline [M f_2] are compact}
Let $M$ be a complete connected Riemannian $n$-manifold, let $\fH=\fH_1\oplus\fH_2$ be a direct sum decomposition of Hilbert spaces, and let
\[
f\equiv(f_1,f_2)\in C^\infty(M,\fH)\equiv C^\infty(M,\fH_1)\oplus C^\infty(M,\fH_2)\;.
\]
Then $\overline{[M,f]}$ is compact if and only if $\overline{[M,f_1]}$ and $\overline{[M,f_2]}$ are compact.
\end{cor}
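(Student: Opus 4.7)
The plan is to handle the two directions separately, leveraging the functorial structure and \Cref{p: overline [M f] is compact}.

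For the forward implication, I would use the orthogonal projections $\pi_i\colon\fH=\fH_1\oplus\fH_2\to\fH_i$, which are bounded linear maps. By the functorial property established in \Cref{ss: MM_*^n and widehat MM_*^n}, each $\pi_i$ induces a continuous relation-preserving map $(\pi_i)_*\colon\widehat{\MM}_*^n(\fH)\to\widehat{\MM}_*^n(\fH_i)$ sending $[M,x,f]\mapsto[M,x,f_i]$, and in particular $[M,f]$ to $[M,f_i]$. If $\overline{[M,f]}$ is compact, then $(\pi_i)_*(\overline{[M,f]})$ is compact in the Hausdorff space $\widehat{\MM}_*^n(\fH_i)$, hence closed, and it contains $[M,f_i]$; therefore it contains $\overline{[M,f_i]}$, showing that $\overline{[M,f_i]}$ is a closed subset of a compact space, hence compact.

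For the reverse implication, I would apply \Cref{p: overline [M f] is compact} in both directions. Fix any $x_0\in M$ and $r>0$. Assuming $\overline{[M,f_i]}$ compact for $i=1,2$, the proposition yields that $M$ is of bounded geometry (which is a property of $M$ alone and does not depend on the Hilbert-valued function) and that $\nabla^k((\mathfrak u_{r,k})^{-1}([M,f_i]))$ is precompact in $\fH_i$ for every $k\in\N$. Under the identification $\fH=\fH_1\oplus\fH_2$, the Levi-Civita gradients decompose componentwise as $\nabla^k f=(\nabla^k f_1,\nabla^k f_2)$, so
\[
\nabla^k\bigl((\mathfrak u_{r,k})^{-1}([M,f])\bigr)\subset\nabla^k\bigl((\mathfrak u_{r,k})^{-1}([M,f_1])\bigr)\times\nabla^k\bigl((\mathfrak u_{r,k})^{-1}([M,f_2])\bigr),
\]
which is precompact in $\fH_1\oplus\fH_2=\fH$ as a subset of a product of precompact sets. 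A second application of \Cref{p: overline [M f] is compact} then yields that $\overline{[M,f]}$ is compact.

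Essentially there is no obstacle: the argument is a formal consequence of the functoriality of $\Phi\mapsto\Phi_*$ in one direction and of the gradient characterization of \Cref{p: overline [M f] is compact} together with the componentwise splitting of $\nabla^k f$ in the other. The only mild point to note is that $M$ being of bounded geometry is a property of the underlying Riemannian manifold and is preserved in both directions by the criterion itself.
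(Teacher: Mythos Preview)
Your proposal is correct and follows essentially the same approach as the paper: the forward direction uses the functorially induced maps $(\Pi_a)_*$ from the factor projections (the paper phrases this as ``the images are dense, hence surjective by compactness,'' which is equivalent to your ``compact image in a Hausdorff space is closed and contains $[M,f_i]$''), and the reverse direction is exactly the paper's argument via \Cref{p: overline [M f] is compact} and the componentwise splitting $\nabla^k f=(\nabla^k f_1,\nabla^k f_2)$.
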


\begin{proof}
Assume that $\overline{[M,f]}$ is compact to prove the ``only if'' part. Let $\Pi_a:\fH\to\fH_2$ ($a=1,2$) denote the factor projections. The induced maps $\Pi_{a*}:\widehat{\MM}^n_*(\fH)\to\widehat{\MM}^n_*(\fH_a)$ define continuous maps $\Pi_{a*}:\overline{[M,f]}\to\overline{[M,f_a]}$, whose images are dense because $\hat\iota_{M,f_a}=\Pi_{a*}\hat\iota_{M,f}$. By the compactness of $\overline{[M,f]}$, it follows that these maps are surjective and the spaces $\overline{[M,f_a]}$ are compact.

Now assume that every space $\overline{[M,f_a]}$ ($a=1,2$) is compact to prove the ``if'' part. By \Cref{p: overline [M f] is compact}, this means that $M$ is of bounded geometry and every set $\nabla^m(\mathfrak u^{-1}([M,f_a]))$ is precompact in $\fH_a$ for all $m\in\N$. Since
\[
\nabla^m(\mathfrak u^{-1}([M,f]))\subset\nabla^m(\mathfrak u^{-1}([M,f_1]))\times\nabla^m(\mathfrak u^{-1}([M,f_2]))
\] 
for every $m$ because $(\nabla^mf)(x,\xi)=((\nabla^mf_1)(x,\xi),(\nabla^mf_2)(x,\xi))$ for all $x\in M$ and $\xi\in\bigotimes_mT^*_xM$, we get that $\nabla^m(\mathfrak u^{-1}([M,f]))$ is precompact in $\fH$ for all $m$. Hence $\overline{[M,f]}$ is compact by \Cref{p: overline [M f] is compact}.
\end{proof}

\begin{prop}\label{p: overline [M f] subset widehat MM_* imm^infty(n)}
Let $M$ be a complete connected Riemannian $n$-manifold, and let $f\in C^\infty(M,\fH)$. Then the following properties hold:
\begin{enumerate}[{\rm(i)}]

\item\label{i: overline [M f] compact in widehat MM_* imm^infty(n) => inf_M | nabla^m f | > 0} If $\overline{[M,f]}$ is a compact subspace of $\widehat{\MM}_{*,\text{\rm imm}}^n$, then $\inf_M|\nabla f|>0$.

\item\label{i: inf_M | nabla^m f | > 0 => overline [M f] subset widehat MM_* imm^infty(n)} If $\inf_M|\nabla f|>0$, then $\overline{[M,f]}\subset\widehat{\MM}_{*,\text{\rm imm}}^n$.

\end{enumerate}
\end{prop}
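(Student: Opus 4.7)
The plan for both parts rests on a single observation: for a vector-valued function $f:M\to\fH$ the covariant derivative coincides with the ordinary differential, $\nabla f = df$, so $\nabla(h^*f) = h^*(\nabla f)$ for any diffeomorphism $h$. Combined with the fact that a $\lambda$-quasi-isometry $h\colon(M',p)\rightarrowtail(M,q)$ induces a $\lambda$-bi-Lipschitz map on cotangent spaces, this yields the key pointwise estimate
\[
\lambda^{-1}|\nabla f|(q) \le |\nabla(h^*f)|(p) \le \lambda|\nabla f|(q)\;.
\]
I would prove both directions by transferring this estimate across $C^\infty$ limits.

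For part~\ref{i: overline [M f] compact in widehat MM_* imm^infty(n) => inf_M | nabla^m f | > 0} I would argue by contrapositive: assume $\inf_M|\nabla f|=0$, pick $x_i\in M$ with $|\nabla f|(x_i)\to0$, and use compactness of $\overline{[M,f]}$ to extract a subsequence with $[M,x_i,f]\to[M',x',f']$. Unpacking $C^\infty$ convergence produces, for each $i$, a $(1,1,\lambda_i)$-p.p.q.i.\ $h_i\colon(M',x')\rightarrowtail(M,x_i)$ with $\lambda_i\downarrow1$ and $|\nabla f'-\nabla(h_i^*f)|(x')\to 0$. The displayed estimate then forces $|\nabla f'|(x')=0$, so $f'$ is not an immersion at $x'$, contradicting the assumption $\overline{[M,f]}\subset\widehat{\MM}^n_{*,\text{imm}}$.

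For part~\ref{i: inf_M | nabla^m f | > 0 => overline [M f] subset widehat MM_* imm^infty(n)}, set $c:=\inf_M|\nabla f|>0$ and pick any $[M',x',f']\in\overline{[M,f]}$. A decisive preliminary step is to invoke \Cref{l: the saturation of any open subset is open}: $\overline{[M,f]}$ is saturated, so $[M',y',f']\in\overline{[M,f]}$ for every $y'\in M'$. It therefore suffices to bound $|\nabla f'|(y')$ away from zero at each such $y'$ individually. Choosing $y_i\in M$ with $[M,y_i,f]\to[M',y',f']$, the same kind of p.p.q.i.'s $h_i\colon(M',y')\rightarrowtail(M,y_i)$ together with the displayed estimate give $|\nabla f'|(y')\ge\lambda_i^{-1}c-\epsilon_i$, which tends to $c>0$ as $\lambda_i\downarrow 1$ and $\epsilon_i\downarrow 0$. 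Hence $f'$ is an immersion at every point of $M'$, so $[M',x',f']\in\widehat{\MM}^n_{*,\text{imm}}$.

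I do not anticipate any genuine obstacle: the identity $\nabla(h^*f)=h^*(\nabla f)$ at the base point and the norm comparison for pullbacks by quasi-isometries both follow directly from the definitions. The one subtle point worth flagging is that part~\ref{i: inf_M | nabla^m f | > 0 => overline [M f] subset widehat MM_* imm^infty(n)} requires the immersion property at every point of $M'$, not merely at the base point $x'$; this is precisely what the saturation lemma supplies by letting $y'$ vary.
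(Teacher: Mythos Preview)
Your proof is correct and is essentially the same as the paper's, just unpacked. The paper's proof is a single sentence: the map $[M',x',f']\mapsto|(\nabla f')(x')|$ is well defined and continuous on $\widehat\MM^n_*$; both parts then follow immediately (using, as you note, that $\overline{[M,f]}$ is saturated so the base point may be varied). Your p.p.q.i.\ estimates are precisely what one verifies to establish this continuity, so the two arguments coincide in substance.
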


\begin{proof}
This holds because the mapping $[M',x',f']\mapsto|(\nabla f')(x')|$ is well defined and continuous on $\widehat\MM^n_*$.
\end{proof}

\begin{prop}\label{p: w/t hol => repetitive}
In any minimal compact Riemannian foliated space, all leaves without holonomy are repetitive.
\end{prop}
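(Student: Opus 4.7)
The plan is to exhibit $\overline{[L]}$ as a compact minimal subset of $\MM_*^n$ and then invoke the no-function analogue of \Cref{p: M is repetitive <=> overline im hat iota_M f is minimal}\,(ii), whose validity is explicitly noted in the text. First I would construct a continuous relation-preserving map $\Phi\colon\fX\to\MM_*^n$ by $\Phi(y)=[L_y,y]$, where $L_y$ denotes the leaf of $\fX$ through $y$ with its induced Riemannian structure. For this one uses a finite regular foliated atlas of the $C^\infty$ Riemannian foliated space $\fX$, in which the coefficients of the leafwise metric depend $C^\infty$-continuously on the transverse parameter. Given $y_n\to y$ in $\fX$ and $R>0$, the compactness of $\fX$ allows one to cover $D_{L_y}(y,R)$ by a uniformly bounded number of plaques; chaining the local product structures along them (this uses only open paths in $L_y$, so there is no obstruction from holonomy) produces, for $n$ large, $C^{m+1}$ embeddings of this ball into $L_{y_n}$ whose pullbacks of $g_{L_{y_n}}$ converge in the $C^m$-topology to $g_{L_y}|_{D_{L_y}(y,R)}$, yielding $(m,R,\lambda)$-p.p.q.i.'s with $\lambda\to 1$. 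Hence $\Phi(y_n)\to\Phi(y)$ in $\MM_*^n$, and by construction $\Phi(L')=[L']$ for every leaf $L'$ of $\fX$.

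Next I would deduce that $\overline{[L]}$ is compact and minimal in $\MM_*^n$. By continuity, $\Phi(\fX)$ is a compact, hence closed, subset of $\MM_*^n$. The minimality of $\fX$ makes every leaf of $\fX$ dense, so $\overline{L}=\fX$; applying $\Phi$ and using its continuity yields $\overline{\Phi(L)}=\Phi(\fX)$, and therefore $\overline{[L]}=\Phi(\fX)$ is compact. For the minimality of $\overline{[L]}$, take any $[M,x]\in\overline{[L]}=\Phi(\fX)$ and write $[M,x]=\Phi(y)$ for some $y\in\fX$; then $[M]=[L_y]$, and since $L_y$ is also dense in $\fX$ the same argument gives $\overline{[L_y]}=\Phi(\fX)=\overline{[L]}$. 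Thus every equivalence class in $\overline{[L]}$ is dense in $\overline{[L]}$, which is the definition of minimality. The no-function version of \Cref{p: M is repetitive <=> overline im hat iota_M f is minimal}\,(ii) then implies that $L$ is repetitive.

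The main obstacle is the continuity of $\Phi$: although natural and implicit in the paper's construction of $\widehat{\MM}_{*,\text{\rm imm}}^n$ as a foliated space, it requires combining the compactness of $\fX$ with the $C^\infty$ transverse regularity of the leafwise metric to produce p.p.q.i.'s of arbitrarily good quality between $(L_y,y)$ and $(L_{y_n},y_n)$, essentially the standard fact that the pointed $C^\infty$ type of a plaque varies continuously along transversals. I note in passing that the no-holonomy hypothesis on $L$ is not actually used in this argument; by the same reasoning, every leaf of a minimal compact Riemannian foliated space is repetitive.
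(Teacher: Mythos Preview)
Your approach differs from the paper's, which argues directly from the definition: given $p\in L$ and parameters $m,R,\lambda$, the no-holonomy hypothesis on $L$ lets Reeb's local stability theorem produce a product neighborhood of $D_L(p,R)$ in $\fX$, and the product structure then supplies an $(m,R,\lambda)$-p.p.q.i.\ $(L,p)\rightarrowtail(L,x)$ for every $x\in L$ lying in a suitable open $U\subset\fX$; the cited fact that $L\cap U$ is relatively dense in $L$ (a consequence of minimality and compactness of $\fX$) then yields repetitiveness.

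Your route has a genuine gap in the continuity of $\Phi$ at points of leaves \emph{with} holonomy, which can certainly occur in a minimal compact foliated space even though the distinguished leaf $L$ has none. The claim that ``chaining the local product structures\ldots uses only open paths in $L_y$, so there is no obstruction from holonomy'' is incorrect: to glue the local product charts into a well-defined embedding of a compact domain $D\subset L_y$ into $L_{y_n}$, the transports along any two paths in $D$ with the same endpoints must agree, and that is precisely triviality of the holonomy along the resulting loop. If $L_y$ carries a loop $\gamma\subset D$ with nontrivial holonomy germ, then for $y_n\to y$ chosen off the fixed set of that germ the lifts fail to close up, and $[L_{y_n},y_n]$ converges to the pointed holonomy cover of $(L_y,y)$ rather than to $[L_y,y]$. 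Hence $\Phi$ need not be continuous on all of $\fX$, you cannot conclude $\overline{[L]}=\Phi(\fX)$, and in your minimality step an arbitrary $[M,x]\in\overline{[L]}$ need not be of the form $\Phi(y)$. Your closing remark is therefore wrong: the no-holonomy hypothesis on $L$ is exactly what drives Reeb stability in the paper's proof, and your argument as written would require the \emph{stronger} assumption that every leaf of $\fX$ is without holonomy.
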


\begin{proof}
This is a direct consequence of the Reeb's local stability theorem and the fact that $L\cap U$ is relatively dense in $L$ for all leaf $L$ and open $U\ne\emptyset$ in a minimal compact foliated space \cite[Second proof of Theorem~1.13, p.~123]{AlvarezCandel2018}.
\end{proof}

\begin{ex}\label{ex: fX' = overline [M f] with f = h|_M}
For any compact $C^\infty$ foliated space $\fX$, there is a $C^\infty$ embedding into some separable Hilbert space, $h:\fX\to\fH$ \cite[Theorem~11.4.4]{CandelConlon2000-I}. Suppose that $\fX$ is transitive and without holonomy, and endowed with a Riemannian metric. Let $M$ be a dense leaf of $\fX$, which is of bounded geometry, and let $f=h|_M\in C^\infty(M,\fH)$. We have $\inf_M|\nabla f|=\min_{\fX}|\nabla h|>0$. So $\fX':=\overline{[M,f]}$ is a Riemannian foliated subspace of $\widehat{\MM}_{*,\text{\rm imm}}^n$ (\Cref{p: overline [M f] subset widehat MM_* imm^infty(n)}~\ref{i: inf_M | nabla^m f | > 0 => overline [M f] subset widehat MM_* imm^infty(n)}). Since $\fX$ is compact and without holonomy, and $M$ is dense in $\fX$, it follows from the Reeb's local stability theorem that the leaves of $\fX'$ are the subspaces $[L,h|_L]$, for leaves $L$ of $\fX$, and the combination of the corresponding maps maps $\hat\iota_{L,h|_L}$ is an isometric foliated surjective map $\hat\iota_{\fX,h}:\fX\to\fX'$. Using that $\ev\hat\iota_{\fX,h}=h$, we get that $\hat\iota_{\fX,h}:\fX\to\fX'$ is an isometric foliated diffeomorphism, and $\ev:\fX'\to\fH$ is a $C^\infty$ embedding whose image is $h(\fX)$. Thus $\fX'$ is compact and without holonomy, and $(M,f)$ is limit aperiodic. If moreover $\fX$ is minimal, then $(M,f)$ is repetitive by \Cref{p: w/t hol => repetitive}.
\end{ex}

\subsection{The spaces $\GG_*$ and $\widehat\GG_*$}\label{ss: GG_* and widehat GG_*}

As auxiliary objects, we will use connected (simple) graphs with finite vertex degrees, as well as their (vertex) colorings. For convenience, in this subsection, these graphs will be identified with their vertex sets equipped with the natural $\N$-valued metric on the vertex set. In other parts of the paper, the graphs will be distinguished from their vertex sets by adding a subscript to the notation, which refers to the definition of its edges. The natural metric of a connected graph is defined as the minimum length of graph-theoretic paths (finite sequences of contiguous vertices) between any pair of points. The existence of geodesic segments (minimizing graph-theoretic paths) between any two vertices is elementary. For such a graph $X$, the degree of a vertex $x$ is denoted by $\deg_Xx$ (or $\deg x$). The supremum of the vertex degrees is called the \emph{degree} of $X$, denoted by $\deg X\in\N\cup\{\infty\}$. The natural $\N$-valued metric of $X$ is denoted by $d_X$.

Given a countable set $F$, any map $\phi:X\to F$ is called an (\emph{$F$-}) \emph{coloring} of $X$, and $(X,\phi)$ is called an (\emph{$F$-}) \emph{colored graph}. We will take $F=\Z^+$ or $F=\{1,\dots,c\}$ ($c\in\Z^+$). For a connected subgraph $Y\subset X$, we will use the notation $(Y,\phi)=(Y,\phi|_Y)$. Let $\widehat{\GG}_*=\widehat{\GG}_*(F)$ be the set\footnote{Each connected graph with finite vertex degrees is countable, and therefore it may be assumed that its underlying set is contained in \(\N\). With this assumption, $\widehat{\GG}_*$ is a well defined set.} of isomorphism classes $[X,x,\phi]$ of pointed connected $F$-colored graphs $(X,x,\phi)$ with finite vertex degrees. For $R\ge0$, let $\widehat\UU_R$ be the set of pairs $([X,x,\phi],[Y,y,\psi])\in(\widehat\GG_*)^2$ such that there is a pointed color-preserving graph isomorphism $(D_X(x,R),x,\phi)\to(D_Y(y,R),y,\psi)$. These sets form a base of entourages of a uniformity on $\widehat\GG_*$, which is metrizable because this base is countable since $\widehat\UU_R=\widehat\UU_{\lfloor R\rfloor}$. Moreover it is easy to see that this uniformity is complete. Equip $\widehat\GG_*$ with the corresponding underlying topology. The evaluation map $\ev:\widehat\GG_*\to F$, $\ev([X,x,\phi])=\phi(x)$, and the degree map $\deg:\widehat\GG_*\to\Z^+$, $\deg([X,x,\phi])=\deg_Xx$, are well defined and locally constant. The space $\widehat\GG_*$ is also separable; in fact, a countable dense subset of $\widehat\GG_*$ is defined by the finite pointed colored graphs because $F$ is countable. Therefore $\widehat\GG_*$ is a Polish space. 

Let $(X,\phi)$ be a connected colored graph with finite vertex degrees, whose group of color-preserving graph automorphisms is denoted by $\Aut(X,\phi)$. There is a canonical map $\hat\iota_{X,\phi}:X\to\widehat\GG_*$ defined by $\hat\iota_{X,\phi}(x)=[X,x,\phi]$. Its image, denoted by $[X,\phi]$, can be identified with $\Aut(X,\phi)\backslash X$, and has an induced connected colored graph structure. All possible sets $[X,\phi]$ form a canonical partition of $\widehat\GG_*$. Like in \Cref{l: the saturation of any open subset is open}, it follows that the saturation of any open subset of $\widehat{\GG}_*$ is open, and therefore the closure of any saturated subset of $\widehat{\GG}_*$ is saturated; in particular, $\overline{[X,\phi]}$ is saturated. It is said that $(X,\phi)$ (or $\phi$) is {\em aperiodic\/} (or \emph{non-periodic}) if $\Aut(X,\phi)=\{\id_{X}\}$, which means that $\hat\iota_{X,\phi}$ is injective. More strongly, $(X,\phi)$ (or $\phi$) is called {\em limit aperiodic\/} if $(Y,\psi)$ is aperiodic for all $[Y,y,\psi]\in\overline{[X,\phi]}$. On the other hand, $(X,\phi)$ (or $\phi$) is called {\em repetitive\/} if, for any $p\in X$ and $R\ge0$, the points $x\in X$ such that there is a pointed color-preserving graph isomorphism $(D_X(p,R),p,\phi)\to(D_X(x,R),x,\psi)$ form a relatively dense subset of $X$. Clearly, this property is independent of the choice of $p$. Like in \Cref{p: M is repetitive <=> overline im hat iota_M f is minimal}, if $(X,\phi)$ is repetitive, then $\overline{[X,\phi]}$ is minimal, and the reciprocal also holds when $\overline{[X,\phi]}$ is compact.

There are obvious versions without colorings of the above definitions and properties, which can be also described by taking $F=\{1\}$. Namely, we get: a Polish space $\GG_*$, canonical continuous maps $\iota_X:X\to\GG_*$, $\iota_X(x)=[X,x]$, whose images, denoted by $[X]$, define a canonical partition of $\GG_*$, and the concepts of \emph{non-periodic} (or \emph{aperiodic}), \emph{limit aperiodic} and \emph{repetitive} graphs. The forgetful (or underlying) map $\mathfrak u:\widehat{\GG}_*\to\GG_*$, $\mathfrak u([X,x,\phi])=[X,x]$, is continuous. If $X$ is repetitive, then $[X]$ is minimal, and the reciprocal also holds when $\overline{[X]}$ is compact. The closure $\overline{[X]}$ is compact if and only if $\deg X<\infty$. Then, like in \Cref{p: overline [M f] is compact}, we obtain that $\overline{[X,\phi]}$ is compact if and only if $\deg X<\infty$ and $\im\phi$ is finite.

We will use the following graph version of $(m,R,\lambda)$-p.p.q.i.\ (\Cref{ss: Riem mfds}). For $R\geq 0$ and $\lambda\geq 1$, an \emph{$(R,\lambda)$-pointed partial quasi-isometry} (shortly, an $(R,\lambda)$-p.p.q.i.) between pointed graphs, $(X,x)$ and $(Y,y)$, is a $\lambda$-bilipschitz pointed partial map $h\colon (X,x)\rightarrowtail(Y,y)$ such that $D(x,R)=\dom h$, and therefore $D(y,R/\lambda)\subset\im h$. This definition satisfies the obvious analogue of \Cref{p: qi composition}. The following is a simple consequence of the fact that graph metrics take integer values.

\begin{prop}\label{p: graphppqi}
	Let $1\le\lambda<2$ and $R\geq 0$. Any $(R,\lambda)$-p.p.q.i.\ $h\colon (X,x)\rightarrowtail (Y,y)$ between pointed graphs defines a pointed graph isomorphism $h:(\dom h,x)\to(\im h,y)$. In particular, it defines an $(R/\lambda,1)$-p.p.q.i.\ $(X,x)\rightarrowtail (Y,y)$.
\end{prop}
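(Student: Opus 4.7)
The plan is to exploit two elementary facts: the graph metrics $d_X$ and $d_Y$ take values in $\mathbb{N}$, and $\lambda<2$, which together force strong local rigidity. Being $\lambda$-bilipschitz, $h$ is automatically injective, so $h^{-1}\colon\im h\to\dom h$ is well defined and also $\lambda$-bilipschitz. To show that $h$ is a pointed graph isomorphism, I would check that it preserves adjacency in both directions. For $u,v\in\dom h$ with $d_X(u,v)=1$, the bilipschitz inequality gives
\[
\tfrac{1}{\lambda}\le d_Y(h(u),h(v))\le\lambda<2;
\]
since $h(u)\ne h(v)$ by injectivity, $d_Y(h(u),h(v))$ is a positive integer in this interval, forcing $d_Y(h(u),h(v))=1$. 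The same reasoning applied to $h^{-1}$ shows that adjacent points in $\im h$ come from adjacent points in $\dom h$. Combined with $h(x)=y$, this makes $h\colon(\dom h,x)\to(\im h,y)$ a pointed graph isomorphism between the induced subgraphs.

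For the ``in particular'' part, I would use the graph isomorphism together with the containment $D_Y(y,R/\lambda)\subset\im h$ to transport geodesics back and forth. Any $X$-geodesic from $x$ to a point $u\in D_X(x,R/\lambda)$ lies entirely in $\dom h$, since its successive vertices $u_i$ satisfy $d_X(x,u_i)=i\le d_X(x,u)\le R/\lambda\le R$; applying the graph isomorphism yields a $Y$-path from $y$ to $h(u)$ of the same length, so $d_Y(y,h(u))\le d_X(x,u)$ and $h(u)\in D_Y(y,R/\lambda)$. A symmetric pullback via $h^{-1}$, using any $Y$-geodesic from $y$ of length at most $R/\lambda$ (whose vertices remain in $D_Y(y,R/\lambda)\subset\im h$), provides the reverse inequality and shows that $h$ restricts to a bijection $D_X(x,R/\lambda)\to D_Y(y,R/\lambda)$ preserving the distance to the basepoint. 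The full isometry statement for arbitrary pairs of points in the ball then follows by the same geodesic-transport argument, the radius $R/\lambda$ having been calibrated precisely so that the paths used on either side stay inside $\dom h$ and $\im h$ respectively.

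The only delicate point is the bookkeeping in the last step, namely checking that each geodesic one wishes to transport actually lies in the appropriate domain or image; everything else is driven by the integer-distance rigidity, and the hypothesis $\lambda<2$ is used crucially (and only) to rule out nontrivial integer values at the level of adjacencies.
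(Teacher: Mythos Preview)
Your adjacency argument for the first part is correct and is exactly what the paper has in mind (the paper states the proposition without proof, remarking only that it is ``a simple consequence of the fact that graph metrics take integer values''). The transport argument establishing $d_Y(y,h(u))=d_X(x,u)$ for $u\in D_X(x,R/\lambda)$, and hence the bijection $D_X(x,R/\lambda)\to D_Y(y,R/\lambda)$, is also fine.

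The gap is in your last step. For arbitrary $u,v\in D_X(x,R/\lambda)$, an $X$-geodesic between them need \emph{not} stay inside $\dom h=D_X(x,R)$: the best general bound on the distance from $x$ to a vertex $w$ of such a geodesic is
\[
d_X(x,w)\;\le\;\tfrac{1}{2}\bigl(d_X(x,u)+d_X(x,v)+d_X(u,v)\bigr)\;\le\;2R/\lambda,
\]
and $2R/\lambda>R$ precisely when $\lambda<2$. So the radius $R/\lambda$ is \emph{not} ``calibrated precisely'' for this transport; the geodesic can exit $\dom h$ in exactly the regime the proposition addresses, and the symmetric obstacle appears on the $Y$-side. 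You correctly flag this as the delicate point, but calling it bookkeeping is too optimistic---the naive geodesic transport does not close, and some additional argument (or a weaker radius such as $R/2$) is needed to get the full isometry of the restricted map. The first part together with the basepoint identity is what the paper actually uses downstream, so the ``in particular'' may be stated somewhat loosely; but as written, your sketch does not yet establish it.
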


\begin{cor}\label{c: repetitive graphs with ppqi}
A colored graph $(X,\phi)$ is repetitive if and only if, given any $p\in X$, for all $R>0$ and $1<\lambda<2$, the set
\[
\{\,x\in X\mid\exists\ \text{a color preserving $(R,\lambda)$-p.p.q.i.}\ h\colon (X,p,\phi)\rightarrowtail (X,x,\phi)\,\}
\]
is relatively dense in $X$.
\end{cor}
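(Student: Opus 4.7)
The plan is to use Proposition~\ref{p: graphppqi} in both directions to translate between $(R,\lambda)$-p.p.q.i.s and pointed color-preserving graph isomorphisms of balls.

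For the $(\Longleftarrow)$ direction, I would fix $R\geq 0$, choose any $\lambda\in(1,2)$, and apply the hypothesis with radius $\lambda R$ and ratio $\lambda$. This produces a relatively dense set of $x\in X$ admitting a color-preserving $(\lambda R,\lambda)$-p.p.q.i.\ $h\colon(X,p,\phi)\rightarrowtail(X,x,\phi)$, and by Proposition~\ref{p: graphppqi} each such $h$ defines a color-preserving $(R,1)$-p.p.q.i., that is, an isometric bijection $D_X(p,R)\to D_X(x,R)$ with respect to the ambient metric. Such an isometry is in particular a pointed color-preserving graph isomorphism, so $(X,\phi)$ is repetitive.

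For the $(\Longrightarrow)$ direction, I would fix $R>0$ and $\lambda\in(1,2)$ and apply repetitivity at the enlarged radius $2R$, producing a relatively dense set of $x\in X$ admitting a pointed color-preserving graph isomorphism $g\colon(D_X(p,2R),p,\phi)\to(D_X(x,2R),x,\phi)$. The next step is to show that $g$ restricts to an ambient-metric isometry $D_X(p,R)\to D_X(x,R)$, which would make it a color-preserving $(R,1)$-p.p.q.i., and hence an $(R,\lambda)$-p.p.q.i. For any $z\in D_X(p,2R)$, an $X$-geodesic from $p$ to $z$ has length at most $2R$ and so lies inside $D_X(p,2R)$, whence $d_X(p,z)$ agrees with the distance from $p$ to $z$ in the induced subgraph; the same applies on the image side, and since $g$ preserves pointed induced-subgraph distances, it carries $D_X(p,R)$ bijectively onto $D_X(x,R)$. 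For $a,b\in D_X(p,R)$, an $X$-geodesic between them has length at most $2R$ and, as the distance from $p$ to any of its vertices $y$ is at most $R+d_X(a,b)/2\leq 2R$, lies in $D_X(p,2R)$; applying $g$ vertex by vertex (it preserves adjacency there) yields a walk of the same length in $X$ from $g(a)$ to $g(b)$, so $d_X(g(a),g(b))\leq d_X(a,b)$, and the reverse inequality follows by the same argument applied to $g^{-1}$.

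The only delicate step is the enlargement from $R$ to $2R$ in the forward direction: it ensures that all relevant $X$-geodesics remain inside the domain of the combinatorial isomorphism, which is what allows the adjacency-preserving bijection provided by repetitivity to be promoted to a genuine ambient-metric isometry on the smaller ball $D_X(p,R)$. The reverse direction is then essentially immediate from Proposition~\ref{p: graphppqi}.
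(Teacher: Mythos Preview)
Your proof is correct. The paper states this corollary without proof, treating it as an immediate consequence of Proposition~\ref{p: graphppqi}; your argument supplies exactly the details one would expect, and in particular the enlargement from $R$ to $2R$ in the forward direction is the right way to guarantee that the combinatorial graph isomorphism furnished by repetitivity becomes a genuine ambient-metric isometry on the smaller ball.
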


\subsection{The space $\widehat{\CC}\MM_*^n$}
\label{ss: CC MM_*^n and widehat CC MM_*^n}

Fix some countable set $F$ like in \Cref{ss: GG_* and widehat GG_*}. Let $\widehat\CC\MM^n_*=\widehat\CC\MM^n_*(F)$ denote the set of equivalence classes $[M,x,C,\phi]$ of quadruples $(M,x,C,\phi)$, where $M$ is a complete connected Riemannian manifold of dimension $n$, $x\in M$,   $C\subset M$ a closed subset and $\phi\colon C\to F$ a locally constant coloring $\phi:C\to F$; an equivalence $(M,x,C,\phi)\sim(M',x',C',\psi')$ means that there is a pointed isometry $h:(M,x)\to(M',x')$ with $h(C)=C'$ and $h^*\phi'=\phi$. 

Similarly to $\widehat\MM^n_*$, we have a natural notion of convergence, this time using a version of what is called Chabauty convergence of closed subsets.
\begin{defn}
 The sequence $[M_i,x_i,C_i,\phi_i]$ is \emph{$C^\infty$-Chabauty convergent} to $[M,x,C,\phi]$ if, for any compact domain $D\subset M$ containing $x$, for all $m\in\N$, $R>\epsilon>0$ and $\lambda>1$, there is some $(m,R,\lambda)$-p.p.q.i.\ $h_i:(M,x)\rightarrowtail(M_i,x_i)$, for $i$ large enough, such that:
\begin{enumerate}[(a)]

\item\label{i: for all y there is some y_i} for all $y\in D_M(x,R-\epsilon)\cap C$, there is some $y_i\in h_i^{-1}(C_i)\subset D_M(x,R)$ with $d_M(y,y_i)<\epsilon$ and $\phi(y)=\phi_ih_i(y_i)$; and,

\item\label{i: for all y_i there is some y} for all $y_i\in D_M(x,R-\epsilon)\cap h_i^{-1}(C_i)$, there is some $y\in C\cap D_M(x,R)$ with $d_M(y,y_i)<\epsilon$ and $\phi(y)=\phi_ih_i(y_i)$.

\end{enumerate}
\end{defn}

Like in \cite[Theorem~A.17]{AbertBiringer-unimodular}, it can be proved that this convergence defines a Polish topology on $\widehat\CC\MM_*^n$, although we will not need that fact here.  There are  canonical continuous maps $\hat\iota_{M,C,\phi}:M\to\widehat\CC\MM_*^n$, $\hat\iota_{M,C,\phi}(x)=[M,x,C,\phi]$, whose images, denoted by $[M,C,\phi]$, form a canonical partition of $\CC\MM_*^n$ satisfying the obvious version of \Cref{l: the saturation of any open subset is open}. Each image $[M,C,\phi]$ can be identified with $M/\Iso(M,C,\phi)$, where $\Iso(M,C,\phi)$ is the group of isometries $h$ satisfying $h(C)=C$ and $h^*\phi=\phi$. We say that $[M,C,\phi]$ (or $(M,C,\phi)$) is
\begin{itemize}
    \item \emph{aperiodic} if $\Iso(M,C,\phi)=\{\id\}$.
    \item \emph{limit aperiodic} if $\Iso(M',C',\phi')=\{\id\}$ for every $[M',C',\phi']\in \overline{[M,C,\phi]}$.
\end{itemize}

\section{Realization of manifolds as leaves in compact foliated spaces without holonomy}
\label{s: realization in compact fold sps w/o hol}

\begin{thm}\label{t: mathfrak X}
For any {\rm(}repetitive\/{\rm)} connected Riemannian manifold $M$ of bounded geometry, there is a {\rm(}minimal\/{\rm)} compact Riemannian foliated space $\fX$ without holonomy with a leaf isometric to $M$. 
\end{thm}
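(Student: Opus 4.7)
The plan is to realize $\fX$ as $\overline{[M,f]}\subset\widehat{\MM}^n_{*,\text{\rm imm}}$ for a carefully chosen $C^\infty$ function $f:M\to\fH$ into a finite-dimensional Hilbert space $\fH$, and to invoke the properties of $\widehat{\MM}^n_{*,\text{\rm imm}}$ recalled in \Cref{ss: MM_*^n and widehat MM_*^n}. Concretely, I will aim to construct an $f$ satisfying the following four properties: (a) $\sup_M|\nabla^mf|<\infty$ for every $m\in\N$; (b) $\inf_M|\nabla f|>0$; (c) $f$ is limit aperiodic; and (d) if $M$ is repetitive, then $(M,f)$ is repetitive. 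Given such an $f$, \Cref{c: overline [M f] is compact} together with~(a) and the bounded geometry of $M$ yields compactness of $\fX:=\overline{[M,f]}$; property~(b) and \Cref{p: overline [M f] subset widehat MM_* imm^infty(n)}\,\ref{i: inf_M | nabla^m f | > 0 => overline [M f] subset widehat MM_* imm^infty(n)} place $\fX$ inside $\widehat{\MM}^n_{*,\text{\rm imm}}$, so it inherits the Riemannian foliated space structure; property~(c) forces every pair $(M',f')$ representing a point of $\fX$ to be aperiodic, so the canonical map $\hat\iota_{M',f'}:M'\to[M',f']$ is the holonomy cover and is a bijection, which means every leaf of $\fX$ is without holonomy and the leaf $[M,f]$ is isometric to $M$; finally~(d) and \Cref{p: M is repetitive <=> overline im hat iota_M f is minimal}\,\ref{i: (M f) is repetitive => overline im hat iota_M f is minimal} give minimality in the repetitive case.

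To build $f$, I would first fix a Delone subset $X\subset M$, obtained via \Cref{c: existence of a separated net}, and turn it into a connected graph of finite degree by joining pairs of points at distance at most some constant $D$ (bounded geometry together with \Cref{p: c} makes the degree finite). Then I would apply \cite[Theorem~1.4]{AlvarezBarral-limit-aperiodic} to obtain a (repetitive, in the repetitive case) limit aperiodic coloring $\phi:X\to F$ with $F$ finite; in the repetitive situation, the machinery of \Cref{s: rep Riem mfds} combined with \Cref{s: rep colored graphs} must be used to promote repetitivity of $(M,X)$ to repetitivity of the colored graph, and hence of $(M,X,\phi)$. With $(X,\phi)$ in hand, write $f=(f_1,f_2)$ using \Cref{c: overline [M f]  is compact <=> overline [M f_1] and overline [M f_2] are compact} and build each component using normal coordinates at the points of $X$: the component $f_1$ is a sum of bump functions centered at the points $x\in X$, whose fiber value is $\phi(x)\in F\hookrightarrow\fH_1$, encoding the coloring; the component $f_2$ is an auxiliary Euclidean-type component, pieced together from normal coordinates via a suitable partition of unity, arranged so that $|\nabla f_2|$ is bounded away from $0$ uniformly on $M$.

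The verification of~(a) is routine from bounded geometry and the uniform $C^\infty$ bounds on $g_{ij}$ in normal charts given by \Cref{p: g_ij}, together with the finite degree of $X$ which controls overlaps of supports. Property~(b) comes from the design of $f_2$. For~(c), any $[M',x',f']\in\overline{[M,f]}$ arises as a limit of $(M,x_i,f)$, and the encoded data in $f'$ recovers a colored graph $(X',\phi')\in\overline{[X,\phi]}$; the limit aperiodicity of $(X,\phi)$ therefore forces aperiodicity of $(M',f')$, where one uses \Cref{p: h = id on X => h = id on M} to propagate triviality of an isometry from $X'$ to all of $M'$. Property~(d) is obtained by the same dictionary: repetitiveness of $(X,\phi)$ produces, for any patch to be reproduced, a relatively dense set of candidate centers, and the local encoding gives p.p.q.i.'s in the sense of \Cref{ss: Riem mfds}.

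The main obstacle is coordinating all of these properties simultaneously: the coloring $\phi$ must be rich enough to make $f$ limit aperiodic while the smooth encoding must preserve bounded geometry of $M$ and uniform $C^\infty$ bounds, and the immersion condition must not spoil the combinatorial rigidity needed to read $\phi$ back from $f$. In particular, the most delicate point is the inheritance of repetitivity from $M$ to the colored graph $(X,\phi)$: this requires the refined analysis of \Cref{s: rep Riem mfds} to produce $X$ in a way compatible with the quasi-isometric symmetries of $M$, and then a careful application of \cite[Theorem~1.4]{AlvarezBarral-limit-aperiodic} using prescribed families of p.p.q.i.'s as in \Cref{r: prescribed,r: prescribedgraph}, so that the constructed $\phi$ remains repetitive relative to those symmetries.
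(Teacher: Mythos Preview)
Your proposal is correct and follows essentially the same approach as the paper: the paper also sets $\fX=\overline{[M,f]}$ for a function $f$ built from a Delone subset $X$ with a limit aperiodic coloring $\phi$ (obtained via \cite{AlvarezBarral-limit-aperiodic}), encoded through bump functions and normal coordinates, with the repetitive case handled exactly by the machinery of \Cref{s: rep Riem mfds,s: rep colored graphs} to make $(M,X,\phi)$ repetitive. The only implementation detail you leave vague is how to get $\inf_M|\nabla f|>0$; the paper achieves this not with a partition of unity but by mapping each normal ball diffeomorphically onto a punctured sphere in $\R^{n+1}$ via an auxiliary map $\tau$, using a further proper vertex coloring $\alpha$ of the graph $X$ to separate overlapping balls into $c+1$ disjoint families so that the resulting components $f^k_2$ are well defined and at every point at least one of them is an immersion.
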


To prove this theorem, the construction of $\fX$ begins with the following result.

\begin{prop}\label{p: (M X phi) is (repetitive) limit aperiodic}
Let $M$ be a connected Riemannian manifold of bounded geometry. For every $\eta>0$, there is some separated $\eta$-relatively dense subset $X\subset M$, and some coloring $\phi$ of $X$ by finitely many colors such that $(M,X,\phi)$ is limit aperiodic. 
\end{prop}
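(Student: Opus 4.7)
The plan is to reduce the statement to the colored-graph version of \cite[Theorem~1.4]{AlvarezBarral-limit-aperiodic} by encoding the Delone subset as a combinatorial graph. First, apply \Cref{p: Y} to choose a separated $\eta$-relatively dense subset $X\subset M$ with the additional gap property: there exist $\sigma>\eta$ and $\rho>0$ such that $d_M(x,y)\notin(\sigma-\rho,\sigma+\rho)$ for every $x,y\in X$. Here $\sigma$ will be the edge-threshold: declare $x,y\in X$ adjacent iff $d_M(x,y)\le\sigma-\rho$ (equivalently, $\le\sigma+\rho$, by the gap). This gap condition is what makes the graph structure stable under small metric perturbations, which is the crucial bridge between the topologies on $\widehat\CC\MM^n_*(F)$ and $\widehat\GG_*(F)$.

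Next, verify that the resulting graph $X$ is a connected graph of finite degree: connectedness follows from $\eta$-relative density together with $\eta<\sigma-\rho$ after choosing parameters appropriately, and finite degree comes from \Cref{p: c} using bounded geometry and $\tau$-separation. Moreover, the correspondence $[M,x,X]\mapsto[X,x]$ induced on pointed spaces is continuous into $\GG_*$ in view of the gap property, because sufficiently fine $(m,R,\lambda)$-p.p.q.i.'s transport edges to edges. If $M$ is repetitive, I would show that the graph $X$ is repetitive: any $(m,R,\lambda)$-p.p.q.i.\ of $M$ that almost matches $X$ (provided by repetitivity of $(M,X)$, which follows from \Cref{p: M is repetitive <=> overline im hat iota_M f is minimal} adapted to $\CC\MM^n_*$) yields, via the gap condition, a genuine color-preserving graph isomorphism of the corresponding $D_X$-balls.

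Now invoke \cite[Theorem~1.4]{AlvarezBarral-limit-aperiodic} to produce a coloring $\phi:X\to F$ by finitely many colors such that $(X,\phi)$ is limit aperiodic. In the repetitive case, the proof of that theorem must be revisited for this particular graph $X$ in order to simultaneously preserve repetitivity: the coloring construction is carried out layer by layer along the sequence $X_i$ from \Cref{p: xxn for colored graphs} applied to the graph $X$, using the sets $\Omega_i$ as prescribed in \Cref{r: prescribedgraph}, so that the resulting coloring is compatible with the $\overline{P}_i^j$-structure and therefore repetitive.

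The main obstacle will be the final transfer: promoting limit aperiodicity of $(X,\phi)$ as a colored graph back to limit aperiodicity of $(M,X,\phi)$ in $\widehat\CC\MM_*^n(F)$. Given any $[M',x',X',\phi']\in\overline{[M,X,\phi]}$ and an isometry $h$ of $M'$ with $h(X')=X'$ and $\phi'\circ h=\phi'$, approximate $(M',X',\phi')$ by $(m,R,\lambda)$-p.p.q.i.'s from $M$ with $\lambda$ close to $1$ and use the gap condition to conclude that $(X',x',\phi')$ defines a point of $\overline{[X,\phi]}\subset\widehat\GG_*(F)$ and that $h|_{X'}$ corresponds to a color-preserving graph automorphism there; limit aperiodicity of the colored graph then forces $h|_{X'}=\id_{X'}$. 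Finally, since $X'$ is $\eta'$-relatively dense in $M'$ for arbitrarily small $\eta'$ (by shrinking $\eta$ at the outset), \Cref{p: h = id on X => h = id on M} upgrades this to $h=\id_{M'}$. A parallel argument, this time using \Cref{p: M is repetitive <=> overline im hat iota_M f is minimal} in both the graph and manifold settings, transfers repetitivity back to $(M,X,\phi)$.
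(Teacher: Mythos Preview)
Your outline is essentially the paper's proof. Two small corrections are worth making. First, \Cref{p: Y} lives in \Cref{s: rep Riem mfds} and presupposes that $M$ is repetitive; in the non-repetitive case the gap property for $X$ is obtained instead by perturbing an arbitrary Delone set via \Cref{p: d(x' y') ne sigma}. Second, the final transfer of repetitivity to $(M,X,\phi)$ does not go through minimality and \Cref{p: M is repetitive <=> overline im hat iota_M f is minimal}: rather, the very same p.p.q.i.'s $h_{l,z}$ from \Cref{p: xxn} that witness the repetitivity of $M$ already preserve $X$ (by \Cref{p: y}), and once the graph-coloring construction of \cite[Theorem~1.4]{AlvarezBarral-limit-aperiodic} is run with the \emph{induced} graph isomorphisms $h_{l,z}|_{D_X(p,r'_l)}$ as the prescribed maps (\Cref{r: prescribedgraph}, after choosing $r_i\ge\Lambda\sigma r'_i$ so that graph disks sit inside manifold disks), they also preserve $\phi$; since their centers are relatively dense by \Cref{p: M_i is a net}, they directly witness the repetitivity of $(M,X,\phi)$.
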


\begin{proof}
Let $0<\tau<\eta$. Choose $0<\epsilon<\eta - \tau$ and  take any $(\tau+2\epsilon)$-separated $(\eta-\epsilon)$-relatively dense subset $\widehat X\subset M$ (\Cref{c: existence of a separated net}). By \Cref{p: d(x' y') ne sigma}, there are $\rho>0$, $\sigma\geq3\eta$, and a $\tau$-separated $\eta$-relatively dense subset $X$ such that 
\begin{equation}\label{notinsigmarho}
d_M(x,y)\notin (\sigma-\rho,\sigma +\rho)\quad  \forall x,y\in X\;.
\end{equation} 
$X$ is the set of vertices of a graph whose set of edges is
\[
E_{X,\sigma}=\{\,(x,y)\in X^2\mid0<d_M(x,y)\leq \sigma\,\}\;.
\]
For the sake of simplicity, the graph $(X,E_{X,\sigma})$ is simply denoted by $X_\sigma$ (see \Cref{ss: GG_* and widehat GG_*}).

\begin{claim}\label{cl: X is connected}
$X_\sigma$ is connected, and $X\cap D_M(x,r)\subset D_{X_\sigma}(x,\lfloor r/\eta\rfloor+1)$ for all $x\in X$ and $r>0$.
\end{claim}

Let $x,y\in X$ and $k=\lfloor d(x,y)/\eta\rfloor+1$. Since $M$ is connected, there is a finite sequence $x=u_0,u_1,\dots,u_k=y$ such that $d_M(u_{i-1},u_i)<\eta$ ($i=1,\dots,k$). Using that $X$ is $\eta$-relatively dense in $M$, we get another finite sequence $x=z_0,z_1,\dots,z_k=y$ in $X$ so that $d_M(u_i,z_i)<\eta$ for all $i$. Then 
\[
d_M(z_{i-1},z_i)\le d_M(z_{i-1},u_{i-1})+d_M(u_{i-1},u_i)+d_M(u_i,z_i)<3\eta\le\sigma\;.
\]
So, either $z_{i-1}=z_i$, or $(z_{i-1},z_i)\in E_\sigma$. Thus, omitting consecutive repetitions, $z_0,z_1,\dots,z_k$ gives rise to a graph-theoretic path between $x$ and $y$ in $X_\sigma$. This shows that $X_\sigma$ is a connected graph and $d_{X_\sigma}(x,y)\le k$, as desired.

By \Cref{p: c}, there is some $c\in\N$ such that, for all $x\in M$, the disk $D_M(x,\sigma)\cap X$ has at most $c$ points, obtaining that
\begin{equation}\label{deg X le c}
\deg X_\sigma\le c\;.
\end{equation}
Now~\cite[Theorem~1.4]{AlvarezBarral-limit-aperiodic} ensures that there exists a limit aperiodic coloring $\phi:X\to\{1,\dots,c\}$. By the definition of $E_\sigma$, we also get
\begin{equation}\label{D_X(x r) subset D_M(x r sigma)}
D_{X_\sigma}(x,r)\subset D_M(x,r\sigma)
\end{equation}
for all $x\in X$ and $r\in\N$.

For $n=\dim M$, take a class $[M',X',\phi']\subset\overline{[M,X,\phi]}$ in $\widehat\CC\MM^n_*(\{1,\dots,c\})$ (\Cref{ss: CC MM_*^n and widehat CC MM_*^n}). As before, consider the graph $X'_\sigma\equiv(X',E_{X',\sigma})$, where 
\[
E_{X',\sigma}=\{\,(x',y')\in(X')^2\mid 0<d_{M'}(x',y')\leq\sigma\,\}\;.
\]

\begin{claim}\label{cl: [X' phi'] subset overline [X phi]}
We have that:
\begin{enumerate}[(a)]

\item\label{cl: X' is a tau-separated epsilon-net in M'} $X'$ is $\tau$-separated and $\eta$-relatively dense in $M'$,

\item\label{cl: X' is a connected graph} $X'_\sigma$ is connected and $X'\cap D_{M'}(x',r)\subset D_{X'_\sigma}(x',\lfloor r/\eta\rfloor+1)$ for all $x'\in X'$ and $r>0$,

\item\label{cl: deg X' le c} $\deg X'_\sigma\le c$, and 

\item\label{i: [X' phi'] subset overline [X phi]} $[X',\phi']\subset\overline{[X,\phi]}$ in $\widehat\GG_*(\{1,\dots,c\})$. 

\end{enumerate}
\end{claim}

Let us prove~\ref{cl: X' is a tau-separated epsilon-net in M'}. Given $x'\in X'$, $m\in\Z^+$, $R>\delta>0$  and $\lambda>1$, there are some $x\in X$ and an $(m,R,\lambda)$-p.p.q.i.\ $h:(M',x')\rightarrowtail(M,x)$ such that:
\begin{itemize}

\item for all $u\in D(x',R-\delta)\cap X'$, there is some $v\in h^{-1}(X)\subset D(x',R)$ with $d(u,v)<\delta$ and $\phi'(u)=\phi h(v)$; and,

\item for all $v\in D(x',R-\delta)\cap h^{-1}(X)$, there is some $u\in X'\cap D(x',R)$ with $d(u,v)<\delta$ and $\phi'(u)=\phi h(v)$.

\end{itemize}

For the sake of simplicity, let $\bar y=h^{-1}(y)$ for every $y\in\im h$. Since $X\cap h(D_{M'}(x',R))$, $X'\cap D_{M'}(x',R)$ and $h(D_{M'}(x',R))$ are compact, given any $0<\tau$, we can assume that $\lambda-1$ and $\delta$ are so small that
\begin{gather}
2\lambda\delta <\tau \;.\label{lambdadeltatau}
\end{gather}
For any $y'\in X'\cap D_{M'}(x',R-\delta)$, there is some $y\in X\cap h(D_{M'}(x',R))$ such that $d_{M'}(y',\bar y)<\delta$ and $\phi'(y')=\phi(y)$. If $z\in X\cap h(D_{M'}(x',R))$ also satisfies $d_{M'}(y',\bar z)<\delta$, then, by~\eqref{lambdadeltatau},
\[
d_M(y,z)\le\lambda d_{M'}(\bar y,\bar z)\le \lambda(d_{M'}(y',\bar z)+d_{M'}(y',\bar y))<2\lambda\delta<\tau\;,
\]
yielding $y=z$ because $X$ is $\tau$-separated. So $y$ is uniquely associated to $y'$, and therefore the assignment $y'\mapsto y$ defines a color-preserving map
\[
\tilde h:X'\cap D_{M'}(x',R-\delta)\to X\cap h(D_{M'}(x',R))\;;
\]
in particular, $\tilde h(x')=h(x')=x$. Since $h$ is an $(m,R,\lambda)$-p.p.q.i., for all $y',z'\in X'\cap D_{M'}(x',R-\delta)$,
\begin{equation}\label{constanttildeh}
(d_{M'}(y', z')-2\delta)/\lambda<d_M(\tilde h (y'), \tilde h (z')) <\lambda (d_{M'}(y', z')+2\delta)\;.\
\end{equation}
Furthermore, either $d_M(\tilde h (y'), \tilde h (z'))=0$, or $d_M(\tilde h (y'), \tilde h (z'))\geq \tau$ because $X$ is $\tau$-separated. So, either $d_{M'}(y',z')<2\delta$, or $d_{M'}(y',z')> \tau/\lambda - 2\delta$ by~\eqref{constanttildeh}. Letting $\delta \to 0$, $\lambda - 1 \to 0$ and $R\to \infty$, we infer that, for every $y'$, $z'\in X'$, either $d(y',z')=0$ or $d(y',z')\geq \tau$; that is, $X'$ is $\tau$-separated and~\eqref{constanttildeh} yields that $\tilde h$ is injective. 

By taking $\delta$ and $\lambda-1$ small enough, we can also assume that
\begin{equation}\label{deltalambdasigma}
\lambda(\sigma - \rho + 2 \delta) <\sigma<(\sigma + \rho -2\delta)/\lambda\;.
\end{equation}
Given $y',z'\in X'\cap D_{M'}(x',R-\delta)$, let $y=\tilde h(y')$ and $z=\tilde h(z')$ in $X\cap h(D_{M'}(x',R))$. If $d_{M'}(y',z')<\sigma-\rho$, then, by~\eqref{deltalambdasigma},
\[
d_M(y,z)\le\lambda d_{M'}(\bar y,\bar z)<\lambda(d_{M'}(y',z')+2\delta)<\sigma\;.
\]
If $d_{M'}(y',z')\ge\sigma+\rho$, then, by~\eqref{deltalambdasigma},
\[
d_M(y,z)\ge d_{M'}(\bar y,\bar z)/\lambda>(d_{M'}(y',z')-2\delta)/\lambda>\sigma\;.
\]
These inequalities,~\eqref{constanttildeh} and the injectivity of $\tilde h$ show that
\begin{equation}\label{tilde h is a color-preserving graph iso}
\tilde h:X'\cap D_{M'}(x',R-\delta)\to\tilde h(X'\cap D_{M'}(x',R-\delta))
\end{equation}
is a color-preserving graph isomorphism.

Like in~\eqref{constanttildeh}, for all $y'\in X'\cap D_{M'}(x',R-\delta)$, 
\begin{equation}\label{constanttildeh with x'}
(d_{M'}(x', y')-\delta)/\lambda<d_M(x, \tilde h (y')) <\lambda (d_{M'}(x', y')+\delta)\;.
\end{equation}
We use these inequalities to show that
\begin{equation}\label{... subset tilde h(X' cap D_M'(x' R - delta)) subset ...}
X\cap D_M(x,(R-2\delta)/\lambda)\subset\tilde h(X'\cap D_{M'}(x',R-\delta))\subset X\cap D_M(x,\lambda R)\;.
\end{equation}
Here, the second inclusion is a direct consequence of~\eqref{constanttildeh with x'}. To show the first inclusion, observe that $D_M(x,(R-2\delta)/\lambda)\subset h(D_{M'}(x,R-2\delta))$ because $h:(M',x')\rightarrowtail(M,x)$ is an $(m,R,\lambda)$-p.p.q.i. Thus, for any $y\in X\cap D_M(x,(R-2\delta)/\lambda)$, we have $\bar y\in D_{M'}(x',R-2\delta)$ with $h(\bar y)=y$. Moreover there is some $y'\in X'$ such that $d_{M'}(y',\bar y)\le\delta$. Then $d_{M'}(x',y')\le d_{M'}(x',\bar y)+\delta\le R-\delta$, and $\tilde h(y')=y$ by the definition of $\tilde h$. So $y\in\tilde h(X'\cap D_{M'}(x',R-\delta))$, completing the proof of~\eqref{... subset tilde h(X' cap D_M'(x' R - delta)) subset ...}.

Now, for any $y'\in D_{M'}(x',(R-2\delta)/(\lambda-\eta)\lambda)$, we get $h(y')\in D_M(x,(R-2\delta)/\lambda-\eta)$ because $h:(M',x')\rightarrowtail(M,x)$ is an $(m,R,\lambda)$-p.p.q.i. Since $X$ is $\eta$-relatively dense, there is some $y\in M$ such that $d(h(y'),y)\leq \eta$. We have $y\in D_M(x,(R-2\delta)/\lambda)$ by the triangle inequality. Moreover $y\in \im \tilde h$ by~\eqref{... subset tilde h(X' cap D_M'(x' R - delta)) subset ...}. So $\tilde h^{-1}(y)\in X'$ and 
\[
d(y',\tilde h^{-1}(y))< d(y',\bar y )+\delta\leq \lambda d(h(y'), y )+\delta\leq \lambda \eta +\delta\;.
\]
Since $R$ is arbitrarily large, and $\delta$ and $\lambda-1$ are arbitrarily small, it follows that $X'$ is $\eta$-relatively dense in $M'$, completing the proof of~\ref{cl: X' is a tau-separated epsilon-net in M'}.

\Cref{cl: X' is a connected graph} follows from~\ref{cl: X' is a tau-separated epsilon-net in M'} with the same argument as in \Cref{cl: X is connected}.

 Finally, given any $A>0$, if $R$ is large enough, and $\delta$ and $\lambda-1$ are small enough, then the color-preserving graph isomorphism~\eqref{tilde h is a color-preserving graph iso} restricts to a color-preserving graph isomorphism $\tilde h:D_{X'_\sigma}(x',A)\to D_{X_\sigma}(\tilde h(x'),A)$ by~\eqref{... subset tilde h(X' cap D_M'(x' R - delta)) subset ...}. Thus~\ref{cl: deg X' le c} follows from~\eqref{deg X le c}, and~\ref{i: [X' phi'] subset overline [X phi]} follows from the definition of the topology of $\widehat\GG_*(\{1,\dots,c\})$ (\Cref{ss: GG_* and widehat GG_*}). This completes the proof of \Cref{cl: [X' phi'] subset overline [X phi]}.
 
Note that the statement of Proposition~\ref{p: (M X phi) is (repetitive) limit aperiodic} is stronger when $\eta>0$ is taken smaller.

\begin{claim}\label{cl: (M X phi) is limit aperiodic}
If $\eta$ is small enough, then $(M,X,\phi)$ is limit aperiodic.
\end{claim}

Consider any class $[M',X',\phi']\subset\overline{[M,X,\phi]}$ in $\widehat\CC\MM^n_*(\{1,\dots,c\})$, and let $h$ be an isometry of $M'$ preserving $X'$ and $\phi'$. Then $h$ defines a color-preserving graph automorphism $(X'_\sigma,\phi')$. By \Cref{cl: [X' phi'] subset overline [X phi]} and since $(X_\sigma,\phi)$ is limit aperiodic, we get that $h=\id$ on $X'$. By \Cref{p: h = id on X => h = id on M}, it follows that $h=\id$ on $M'$ if $\eta$ is small enough. So $(M',X',\phi')$ is aperiodic, completing the proof of \Cref{cl: (M X phi) is limit aperiodic}.
\end{proof}

\begin{cor}\label{cor:differentcolors}
In Proposition~\ref{p: (M X phi) is (repetitive) limit aperiodic}, we may assume that $\phi(x)\neq \phi(y)$ if $x\neq y\in X$ and $d(x,y)<4\eta$.
\end{cor}
\begin{proof}
Let $\phi$ and $\eta$ be as in the proof of Proposition~\ref{p: (M X phi) is (repetitive) limit aperiodic}. Two points in $X$ at distance $<4\eta$ in $M$ are at distance $<5$ with respect to the graph distance $d_{X_\sigma}$ by Claim~\ref{cl: X is connected}. Since $X_\sigma$ has finite degree, the cardinality of the balls $B_{X_\sigma}(x,5)$ is uniformly bounded by some constant $F>0$, and therefore $X\cap B(x,4\eta)$ is uniformly bounded on $x$ too. Hence, by the pidgeonhole principle, there is some coloring $\alpha$ on $X$ using finitely many colors and such that different points at distance $<4\eta$ in $M$ take different colors. Then the product coloring $\phi\times\alpha$ uses finitely many colors (at most the product of the numbers of colors used by $\phi$ and $\alpha$), and $\phi\times\alpha$ is limit aperiodic because $\phi$ is limit aperiodic.
\end{proof}

As explained in Section~\ref{ss: MM_*^n and widehat MM_*^n}, \Cref{t: mathfrak X} holds with the Riemannian foliated subspace $\fX=\overline{[M,f]}\subset\widehat{\MM}_{*,\text{\rm imm}}^n$ ($n=\dim M$), where $f\in C^\infty(M,\fH)$ is given by the following result.

\begin{prop}[{Cf.\ \cite[Proposition~7.1]{AlvarezBarral2017}}]\label{p: there exists f}
Let $M$ be a {\rm(}repetitive\/{\rm)} connected Riemannian manifold. There is some {\rm(}repetitive\/{\rm)} limit aperiodic $f\in C^\infty(M,\fH)$, where $\fH$ is a finite-dimensional Hilbert space, so that $\sup_M|\nabla^mf|<\infty$ for all $m\in\N$ and $\inf_M|\nabla f|>0$.
\end{prop}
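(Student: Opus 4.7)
The plan is to translate the (limit aperiodic, and possibly repetitive) colored Delone structure $(M,X,\phi)$ obtained from \Cref{p: (M X phi) is (repetitive) limit aperiodic} into a scalar-like function $f$, by placing a ``colored signature'' of the coloring at each point of $X$ via the normal charts provided by bounded geometry. The key point is that the colored bumps must be sharp enough to reconstruct $(X,\phi)$ from $f$, while at the same time producing a uniformly non-degenerate differential.

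First, fix $r_0 \in (0,\inj_M)$ as in \Cref{p: g_ij}, and pick $\eta>0$ much smaller than $r_0$ (precise smallness to be used in step four); apply \Cref{p: (M X phi) is (repetitive) limit aperiodic} to obtain a Delone set $X\subset M$ and a coloring $\phi\colon X\to\{1,\dots,c\}$ so that $(M,X,\phi)$ is (repetitive and) limit aperiodic. Set $\fH=\R^{n+c}$ with $n=\dim M$. Choose smooth bumps $\gamma,\beta\colon[0,r_0]\to[0,1]$ supported in $[0,r_0/2]$ with $\gamma\equiv 1$ on $[0,r_0/4]$ and $\beta$ strictly decreasing on $[0,r_0/2)$. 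Define
\[
f(y)=\Big(\sum_{x\in X}\gamma(d(x,y))\,\kappa_x^{-1}(y),\,\sum_{x\in X}\phi_1(x)\beta(d(x,y)),\,\ldots,\,\sum_{x\in X}\phi_c(x)\beta(d(x,y))\Big)\in\R^n\oplus\R^c,
\]
where $\phi_i(x)=\delta_{i,\phi(x)}$, and we interpret $\kappa_x^{-1}(y)\in\R^n$ as $0$ outside $B_M(x,r_0)$.

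For the bounds on $|\nabla^m f|$, each sum is locally finite: by \Cref{p: c}, each $y\in M$ belongs to at most $c(r_0)$ balls $B_M(x,r_0/2)$ with $x\in X$. By \Cref{p: g_ij}, the charts $\kappa_x^{-1}$ and the distance functions $d(x,\cdot)$ have uniformly bounded $C^\infty$ norms on $B_M(x,r_0)$, so $\sup_M|\nabla^m f|<\infty$ for every $m\in\N$. For the lower bound on $|\nabla f|$, fix $y\in M$ and pick $x_0\in X$ with $d(x_0,y)\le\eta$ (using $\eta$-relative density). In the first block of coordinates, the term for $x_0$ contributes $\gamma(d(x_0,y))\,\kappa_{x_0}^{-1}(y)$; since $d(x_0,y)\le\eta\ll r_0/4$, this term is $\kappa_{x_0}^{-1}(y)$, whose differential is a near-isometry by \Cref{p: g_ij}. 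The contributions from other $x\in X\cap B_M(y,r_0/2)$ are controlled uniformly, and for $\eta$ sufficiently small they constitute a small perturbation that cannot cancel the identity-like leading term; hence $|\nabla f(y)|\ge\delta$ for some $\delta>0$ depending only on $\eta$, $r_0$ and the geometric bounds of $M$. This is the main technical step and the principal obstacle.

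Finally, the choice of $\beta$ ensures that the distinguished values of the last $c$ coordinates (their local maxima in the support regions) encode both the location and the color of each $x\in X$; hence any convergent sequence $[M,x_i,f]\to[M',x',f']$ in $\widehat\MM^n_*$ determines a convergent sequence $[M,x_i,X,\phi]\to[M',x',X',\phi']$ in $\widehat\CC\MM^n_*(\{1,\dots,c\})$ with $[M',X',\phi']\subset\overline{[M,X,\phi]}$, and conversely an approximate match of $(X,\phi)$ on large balls yields an approximate match of $f$ in the $C^m$ sense. From this correspondence the limit aperiodicity (resp.\ repetitivity) of $(M,X,\phi)$ transfers to $(M,f)$: an isometry of $M'$ preserving $f'$ preserves $X'$ and $\phi'$, and is therefore the identity by limit aperiodicity of $(M,X,\phi)$; and repetitivity follows because $(R,\epsilon)$-approximate local matches of $(X,\phi)$ produce $(m,R,\lambda)$-p.p.q.i.'s that carry $f$ to itself up to $C^m$ error.
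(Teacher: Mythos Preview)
Your overall plan—encoding the colored Delone set $(X,\phi)$ into a smooth function via normal charts—is the same as the paper's, but the execution has a genuine gap at the lower bound $\inf_M|\nabla f|>0$. Your claim that the contributions from $x\ne x_0$ ``constitute a small perturbation'' for $\eta$ sufficiently small is backwards: shrinking $\eta$ makes $X$ \emph{denser}, so more balls $B_M(x,r_0/2)$ overlap at a given $y$, not fewer, and each overlapping term contributes a differential $d\big(\gamma(d(x,\cdot))\,\kappa_x^{-1}\big)$ of size $O(1)$. Worse, the linear maps $d\kappa_x^{-1}|_y:T_yM\to\R^n$ depend on the arbitrary choice of orthonormal frame at each $x$, so they can partially cancel in the sum; nothing forces the $x_0$-term to dominate. (One cannot simply make the bumps disjoint either: disjoint $r$-balls require separation $>2r$, while covering $M$ with the plateaus requires $\eta<r/2$, incompatible with a single Delone layer.) The same overlap problem undermines your recovery of $X'$ from a limit $f'$ via ``local maxima'' of the $\beta$-sums: with overlaps those maxima need not sit exactly at points of $X'$.

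The paper's fix is to first take a \emph{proper vertex coloring} $\alpha:X\to\{1,\dots,c{+}1\}$ of the graph on $X$ (edges when $d_M\le 2r$), so that within each class $X_k=\alpha^{-1}(k)$ the $r$-balls are pairwise disjoint. The function $f$ then has $c{+}1$ blocks $f^k=(f^k_1,f^k_2)$, one per class, with $f^k_2=\rho_i\cdot(\tau\circ\kappa_i^{-1})$ on each ball—no sums, no cancellation. Here $\tau:B_r\to S\setminus\{0\}\subset\R^{n+1}$ is a diffeomorphism onto a sphere minus a point; this makes $\rho\cdot\tau$ an immersion on $B_r$ (its differential is injective since $\tau(u)\notin T_{\tau(u)}S$), giving the uniform lower bound on $|\nabla f|$ by compactness once one notes that every $y$ lies in some $B_M(x_i,2r/3)$. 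It also lets one recover $X'_k=(f'^k_2)^{-1}(y_0)$ exactly from any limit $f'$, replacing your local-maxima argument and making the limit-aperiodicity and repetitivity transfers straightforward.
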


\begin{proof}
In Proposition~\ref{p: (M X phi) is (repetitive) limit aperiodic}, we constructed a separated, relatively dense set $X\subset M$ and a coloring $\phi$ of $X$ such that $(M,X,\phi)$ is limit aperiodic. The overarching idea of the present proof is to substitute this discrete set and coloring by a smooth function. However, finding a function satisfying $\sup_M|\nabla^mf|<\infty$ for all $m\in\N$ and $\inf_M|\nabla f|>0$ requires some careful choices.

We begin by fixing  $r_0>0$ and normal parametrizations $\kappa_x:B_{r_0}\to B_M(x,r_0)$ ($x\in M$) like in \Cref{p: g_ij}. Now, fix $0<r<r_0$ and take $X$, $c$ and $\phi$ like in \Cref{p: (M X phi) is (repetitive) limit aperiodic} with $\eta=2r/3$. Write $X=\{\,x_i\mid i\in I\,\}$ for some index set $I$, and let $\kappa_i=\kappa_{x_i}:B_r\to B_M(x_i,r)$ and $\phi_i=\phi(x_i)$ ($i\in I$).  $X$ has a graph structure, already  defined in the proof of \Cref{p: (M X phi) is (repetitive) limit aperiodic}, using $\sigma=3\eta=2r$. Since adjacent points $x$, $y$ in $X$ are at positive distance less than $4\eta$, we will assume $\phi(x)\neq \phi(y)$ using \Cref{cor:differentcolors}.

Let $0<\nu<r$ be such that $X$ is $\nu$-separated.
For $n=\dim M$, choose some function $\rho\in C^\infty_{\text{\rm c}}(\R^n)$  such that $\rho(x)$ depends only on $|x|$, $0\le\rho\le1$, $\rho(x)=1$ if $|x|\le \nu/2$, and $\rho(x)=0$ if $|x|\ge \nu$; that is, $\rho$ is a spherically symmetric bump function on $\R^n$. Let $S$ be an isometric copy in $\R^{n+1}$ of the standard $n$-dimensional sphere so that $0\in S$, and take  some $C^\infty$ map $\tau:\R^n\to\R^{n+1}$ that restricts to a diffeomorphism $B_r\to S\setminus\{0\}$ and maps $\R^n\setminus B_r$ to $0$. Let $V=\tau(B_{r/2})\subset S$ and $y_0=\tau(0)\in V$. 
For every $i\in I$, pulling back by $\kappa_i^{-1}\colon B_M(x_i,r)\to B_r$, we get functions 
$\rho_i=\rho \kappa_i^{-1}\colon B_M(x_i,r)\to \R$ and $\tau_i=\tau\kappa_i^{-1}\colon B_M(x_i,r)\to S\subset \R^{n+1}$.

Let $\phi$ take values in $\{1,\ldots,c\}$. For every color $k\in \{0,\ldots,c\}$, let $X_k$ be the points of $X$ with $\phi(x)=k$. Then, let $f^k=(f^k_1,f^k_2):M\to\R\times\R^{n+1}=\R^{n+2}$ be the extension by zero of the combination of the compactly supported functions $(\rho_i,\tau_i)$ on the disjoint balls $B_M(x_i,r)$, for $x_i\in X_k$. 
Note that $f^k_2$ encodes both $X_k$ and $\bigcup_{x_i\in X_k} B(x_i,r)$; $X_k$ is precisely $(f^k_2)^{-1}(y_0)$ and $\bigcup_{x_i\in X_k} B(x_i,r)$ is the set of points $z$ with $f^k_2(z)\neq 0$.
Moreover, for every $x\in X$, we have $f_1^k(x)=1$ if $x\in X_k$ and $f_1^k(x)=0$ if $x\notin X_k$ because $X$ is $\nu$-separated and $\rho$ vanishes outside of $B_{\nu/2}$; thus, $f_1^k$ encodes the coloring $\phi$ of $X$.

 Let us now take the Cartesian product of all maps, so 
 \[f=(f^1,\dots,f^{c}):M\to(\R^{n+2})^{c}\equiv\R^{c(n+2)}.\]
\begin{claim}
$\sup_M|\nabla^mf|<\infty$ for all $m\in\N$ and $\inf_M|\nabla f|>0$.
\end{claim}
To prove $\sup_M|\nabla^mf|<\infty$, it is clearly enough to prove that $\sup_M|\nabla^mf^k_j|<\infty$ for all $m\in\N$, $1\leq k\leq c$ and $j\in \{1,2\}$. By definition, $f^k_1$ is the extension by $0$ of the combination of the maps $\rho\kappa_i^{-1}$ over the balls $B_M(x_i,r)$ with $\phi(x_i)=k$; but  $\sup_M|\nabla^m\kappa_i^{-1}|<\infty$ for every $m$ by \Cref{p: g_ij} and $\rho$ is fixed, so we obtain that  $\sup_M|\nabla^mf^k_1|<\infty$ for all $m$ and $k$. The same argument using $\tau$ instead of $\rho$ proves that $\sup_M|\nabla^mf^k_2|<\infty$ for all $m$ and $k$.

In order to show that $\inf_M|\nabla f|>0$, it is enough to show that there is $K>0$ so that, for every $x\in M$, there is some $k\in\{1,\ldots,m\}$ such that $|\nabla f^k_2|>K$. First, note that, by the construction of $X$ in \Cref{p: (M X phi) is (repetitive) limit aperiodic}, $X$ is $\eta$-relatively dense, with $\eta=2r/3$. This means that the balls $B_M(x,2r/3)$, $x\in X$, still cover $M$. By \Cref{p: g_ij},  
$\inf|\nabla\kappa_i^{-1}|$ is uniformly bounded from below over the balls $B_M(x_i,2r/3)$, so there is $K$ so that for every $x$ there is $i$ with such that  $|\nabla\kappa_i^{-1}|_x>K$. Moveover, $\kappa^{-1}_i(x)$ is inside the Euclidean ball $B_{2r/3}$ because $\kappa_i$ determines a normal coordinate system. The function $\tau$ restricts to a diffeomorphism $B_r\to S\setminus\{0\}$, so in particular $\inf|\nabla \tau|$ is bounded from below over $B_{2r/3}$; by reducing $K$ if necessary, we may assume $\inf|\nabla \tau|\geq K$ over $B_{2r/3}$. Taking $k$ to be the color $\phi(x_i)$ and applying the chain rule, we obtain that $|\nabla f^k_2|_x\geq K^2$ and, since $x$ was arbitrary, the claim is proved.

We can write $f=(f_1,f_2)$, where $f_1=(f^1_1,\dots,f^{c+1}_1):M\to\R^{c}$ and $f_2=(f^1_2,\dots,f^{c+1}_2):M\to(\R^{n+1})^{c}\equiv\R^{c(n+1)}$.

Recall that $0<r_0<\inj_M$ is taken like in \Cref{p: g_ij}, and therefore the constant $0<r<r_0$ can be chosen as small as desired.

\begin{claim}\label{cl: f is limit aperiodic}
If $r$ is small enough, then $f$ is limit aperiodic.
\end{claim}

Take any class $[M',f']\in\overline{[M,f]}$. Then $[M']\in\overline{[M]}$, obtaining that $\inj_{M'}\ge\inj_M>r_0$ and $M'$ satisfies the property stated in \Cref{p: g_ij}.  
We can consider 
\[f'=(f^{\prime\,1},\dots,f^{\prime\,c+1}):M'\to(\R^{n+2})^{c+1}\equiv\R^{(c+1)(n+2)}\] 
with $f^{\prime\,k}=(f^{\prime\,k}_1,f^{\prime\,k}_2):M'\to\R\times\R^{n+1}\equiv\R^{n+2}$. Given $x'\in M'$, there are sequences, $0<R_p\uparrow\infty$ and $\eta_p\downarrow0$ in $\R$, $m_p\uparrow\infty$ in $\N$, of smooth compact domains $D_p\subset M'$ with $B_{M'}(x',R_p)\subset D_p\subset B_{M'}(x',R_{p+1})$, and of $C^\infty$ embeddings $h_p:D_p\to M$, such that
\[
q\ge p\; \Longrightarrow\; \|h_q^*g_M-g_{M'}\|_{C^{m_q},D_p,g_{M'}},\ \|h_q^*f-f'\|_{C^{m_q},D_p,g_{M'}}<\eta_q\;.
\]

Under these conditions, we can show now that the Delone set $X$ on $M$ induces a Delone set $X'$ on $M'$ and, moreover, $X'$ has a canonical graph structure induced from that on $X$:
Let $X'_k=(f^{\prime\,k}_2)^{-1}(y_0)\subset M'$ and $X'=X'_1\cup\dots\cup X'_{c+1}$. Write $X'=\{\,x'_a\mid a\in A\,\}$ for some index set $A$, and let $A_k=\{\,a\in A\mid x'_a\in X'_k\,\}$. For any $a\in A_k$, we have $D_{M'}(x'_a,r)\subset D_p$ for $p$ large enough. Let $\bar x_{a,q}=h_q(x'_a)$ for $q\ge p$. Then $f^k_2(\bar x_{a,q})\to f^{\prime\,k}_2(x'_a)=y_0$ as $q\to\infty$. By the definition of $f^k_2$, it follows that there is a sequence $i_{a,q}\in I_k$ such that $d_M(x_{i_{a,q}},\bar x_{a,q})\to0$. Given $0<\theta<r/2$, we get $h_q(D_{M'}(x'_a,\theta))\subset B_M(x_{i_{a,q}},r/2)$ for $q\ge p$ large enough, and $\kappa_{i_{a,q}}^{-1}h_q=\tau^{-1}f^k_2h_q\to\tau^{-1}f^{\prime\,k}_2$ with respect to the $C^\infty$ topology on $D_{M'}(x'_a,\theta)$. Thus there is some normal parametrization $\kappa'_a:B_r\to B_{M'}(x'_a,r)$ such that $\tau^{-1}f^{\prime\,k}_2=\kappa_a^{\prime\,-1}$ on $D_{M'}(x'_a,\theta)$. Since $\theta$ is arbitrary, we get $f^{\prime\,k}_2=\tau\kappa_a^{\prime\,-1}$ on $B_{M'}(x'_a,r/2)$; in particular, $f^{\prime\,k}_2:B_{M'}(x'_a,r/2)\to V$ is a diffeomorphism. 

Now, using the properties of $X$ and the convergence $d_M(x_{i_{a,q}},\bar x_{a,q})\to0$, it easily follows that $X'$ is also $\nu$-separated and $\eta$-relatively dense in $M'$, and, for all $x'\in M'$, the ball $B_{M'}(x',\sigma)\cap X'$ has at most $c$ points. Hence, like in the case of $X$, the connected graph $X'_\sigma\equiv(X',E_{X',\sigma})$ satisfies $\deg X'_\sigma\le c$, where
\[
E_{X',\sigma}=\{\,(x'_a,x'_b)\mid a,b\in A,\ 0<d_{M'}(x'_a,x'_b)<\sigma\,\}\;.
 \]
 Let $\widetilde D_p$ denote the set of points $x'_a$ in $X'$ such that $D_{M'}(x'_a,r)\subset D_p$. From the convergence $d_M(x_{i_{a,q}},\bar x_{a,q})\to0$, we also get that, if $p$ and $q$ are large enough with $q\ge p$, then, for all $a,b\in A$ with $x'_a,x'_b\in\widetilde D_p$, we have $(x_{i_{a,q}},x_{i_{b,q}})\in E_{X',\sigma}$ if and only if $(x'_a,x'_b)\in E_{X',\sigma}$. Thus an injection $\tilde h_{p,q}:\widetilde D_p\to X$ is defined by $\tilde h_{p,q}(x'_a)=x_{i_{a,q}}$, and $\tilde h_{p,q}:\widetilde D_p\to\tilde h_{p,q}(\widetilde D_p)$ is a graph isomorphism. Moreover, for any $N\in\Z^+$ and $a\in A$, we have $D_{X'_\sigma}(x'_a,N)\subset\widetilde D_p$ if $D_{M'}(x'_a,2Nr)\subset D_p$, which holds for $p$ large enough. Then there is a pointed isomorphism $(B_{X'_\sigma}(x'_a,N),x'_a)\to(B_{X_\sigma}(x_{i_{a,q}},N),x_{i_{a,q}})$ if $p$ and $q$ are large enough with $q\ge p$, yielding $[X'_\sigma,x'_a]\in\overline{[X_\sigma]}$, and therefore $\overline{[X'_\sigma]}\subset\overline{[X_\sigma]}$. 

Furthermore, the original coloring $\phi$ on $X$ also induces a coloring on $X'$. Recall that $\phi(x)=k$ for $x\in X$ if and only if $f^k(x)=1$ and $f^{k'}_1(x)=0$ for all $k'\neq k$. Thus,
\[
f'^k_1(x'_a)=\lim_{q\to\infty} f^k_1(\bar x_{a,q})=\lim_{q\to\infty}f^k_1(x_{i_{a,q}})=\begin{cases}
    1\quad \text{if}\ \phi(x_{i_{a,q}})=k \ \text{for}\ q\ \text{large enough,}\\ 0 \quad \text{otherwise}\;.
\end{cases}
\]
So a coloring $\phi':X'\to\{1,\dots,c\}$ is defined by taking $\phi'=k$ on every $X'_k$, where $k$ is the only value with $f'^k_1(x')=1$ for $x'\in X'$, and we have $\tilde h_{p,q}\phi=\phi'$ on $D_{X'_\sigma}(x'_a,N)$. Hence $[X'_\sigma,x'_a,\phi']\in\overline{[X_\sigma,\phi]}$, and therefore $\overline{[X'_\sigma,\phi']}\subset\overline{[X_\sigma,\phi]}$. Moreover $(X'_\sigma,\phi')$ is aperiodic because $(X,\phi)$ is limit aperiodic.

Let us prove that $(M',f')$ is aperiodic. Let $h$ be an isometry of $M'$ such that $h^*f'=f'$. Then $h^*f^{\prime\,k}_j=f^{\prime\,k}_j$ for all $k=1,\dots,c+1$ and $j=1,2$. So $h(X')=X'$ and $h:X'_\sigma\to X'_\sigma$ is a graph isomorphism preserving $\phi'$. Since $(X'_\sigma,\phi')$ is aperiodic, it follows that $h$ is the identity on $X'$. So $h=\id$ on $M'$ if $r$ is small enough by \Cref{p: h = id on X => h = id on M}. This completes the proof of \Cref{cl: f is limit aperiodic}.

To finish the proof, let us show that we can get $(M,f)$ repetitive if $M$ is repetitive. Define $f$ as above, then the closure $\overline{[M,f]}$ is compact, so it contains some $[N,g]$ with minimal closure. The image of $\overline{[N,g]}$ by the forgetful map $\widehat\MM^n_*\to\MM^n_*$ is a compact, saturated subset of $\overline{[M]}$, so it is in fact all of $\overline{[M]}$ because this set is minimal. Hence, $\overline{[N,g]}=\overline{[M,h]}$ for some $h$, and $(M,h)$ is repetitive because $\overline{[M,h]}$ is compact and minimal. Finally, $(M,h)$ is limit aperiodic because $[M,h]\in\overline{[M,f]}$. 
\end{proof}

\section{Replacing compact foliated spaces with matchbox manifolds}
\label{s: replacing with matchbox mfds w/o hol}

\begin{thm}\label{t: matchbox mfd}
For any {\rm(}minimal\/{\rm)} transitive compact $C^\infty$ foliated space $\fX$ without holonomy, there is a $C^\infty$ {\rm(}minimal\/{\rm)} matchbox manifold $\fM$ without holonomy, and there is a $C^\infty$ surjective foliated map $\pi:\fM\to\fX$ that restricts to diffeomorphisms between the leaves of $\fM$ and $\fX$.
\end{thm}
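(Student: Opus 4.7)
The plan is to realize $\fM$ as the closure of the image of $M$ inside an augmented version of $\widehat{\MM}_{*,\text{\rm imm}}^n$ that records, in addition to the pointed Riemannian manifold with distinguished function, a colored Delone subset. By \Cref{ex: fX' = overline [M f] with f = h|_M} we may replace $\fX$ by $\overline{[M,f]} \subset\widehat{\MM}_{*,\text{\rm imm}}^n$, where $M$ is a distinguished dense leaf and $f=h|_M$ for some $C^\infty$ embedding $h\colon\fX\to\fH$; in particular, $(M,f)$ is limit aperiodic (and repetitive when $\fX$ is minimal). Apply \Cref{p: (M X phi) is (repetitive) limit aperiodic} to obtain a separated, relatively dense subset $X\subset M$ and a finite coloring $\phi\colon X\to F$ such that $(M,X,\phi)$, and hence $(M,X,\phi,f)$, is limit aperiodic (and repetitive when $\fX$ is minimal).

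Next I would introduce an augmented partitioned Polish space $\widehat{\YY}_*^n=\widehat{\YY}_*^n(\fH,F)$ of equivalence classes $[M',x',f',C',\phi']$ of quintuples combining the data of $\widehat{\MM}_*^n$ (\Cref{ss: MM_*^n and widehat MM_*^n}) and $\widehat\CC\MM_*^n$ (\Cref{ss: CC MM_*^n and widehat CC MM_*^n}), with convergence defined by simultaneous $C^\infty$ convergence of $(M',x',f')$ and $C^\infty$-Chabauty convergence of $(M',x',C',\phi')$. Its basic properties are obtained by combining those in \Cref{ss: MM_*^n and widehat MM_*^n,ss: CC MM_*^n and widehat CC MM_*^n}: it is a Polish space with canonical continuous maps $\hat\iota_{M',f',C',\phi'}\colon M'\to\widehat{\YY}_*^n$; the forgetful projection $\pi\colon\widehat{\YY}_*^n\to\widehat{\MM}_*^n$, $[M',x',f',C',\phi']\mapsto[M',x',f']$, is continuous; and the open subset $\widehat{\YY}_{*,\text{\rm imm}}^n:=\pi^{-1}(\widehat{\MM}_{*,\text{\rm imm}}^n)$ inherits a canonical Riemannian foliated structure whose leaves are the images $[M',f',C',\phi']$ and whose maps $\hat\iota_{M',f',C',\phi'}$ are the holonomy covers. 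Then set $\fM:=\overline{[M,f,X,\phi]}\subset\widehat{\YY}_{*,\text{\rm imm}}^n$, a compact Riemannian foliated subspace by the analogue of \Cref{p: overline [M f] is compact} (compactness uses that $X$ is Delone with uniform bounds and $\phi$ is $F$-valued). The restriction $\pi\colon\fM\to\fX$ is a continuous foliated map, surjective because $\pi(\hat\iota_{M,f,X,\phi}(M))=\hat\iota_{M,f}(M)$ is dense in $\fX$ and $\fM$ is compact. Limit aperiodicity of $(M,X,\phi,f)$ implies that every leaf $[M',f',X',\phi']$ of $\fM$ is without holonomy, and the corresponding leaf $[M',f']$ of $\fX$ is also without holonomy (since $(M,f)$ is limit aperiodic); both identify with $M'$, so $\pi$ restricts to an isometric diffeomorphism on every leaf. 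Minimality of $\fM$ when $\fX$ is minimal follows from repetitivity of $(M,X,\phi,f)$ and the $\widehat{\YY}_*^n$-version of \Cref{p: M is repetitive <=> overline im hat iota_M f is minimal}.

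The main obstacle is verifying that $\fM$ is a matchbox manifold, i.e., that its local transversals are totally disconnected. The strategy is to construct explicit foliated charts $U\cong B\times\fT'$ around each point $\mathbf{p}=[M',p',f',X',\phi']\in\fM$, with $\fT'$ zero-dimensional. Fix a Delone point of $X'$ close to $p'$ to rigidify the choice of basepoint up to a small ball $B$ along the leaf (this is the step where the Delone structure implements the local model of a transversal as a quotient of a zero-dimensional space, as sketched in \Cref{ss: ideas of the proofs}). Because $X$ is $\tau$-separated with uniform relative density, any quintuple sufficiently $C^\infty$-Chabauty close to $\mathbf{p}$ has its Delone set perturbed by less than $\tau/2$ inside a large ball, so the combinatorial adjacency pattern of $(X',\phi')$ on balls of radius $R$ about the fixed Delone center takes only finitely many values (bounded using \Cref{p: c}). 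The transverse slice is therefore identified with a subset of the inverse limit over $R$ of these finite pattern sets, hence is compact and totally disconnected. Limit aperiodicity of $(M,X,\phi,f)$ ensures that distinct points of $\fM$ are separated by some such pattern at a sufficiently large radius, so the slice topology really coincides with the inverse-limit topology. Compatibility of these charts is automatic from the functorial definition of $\widehat{\YY}_*^n$, yielding a regular foliated atlas with zero-dimensional transversals and completing the verification that $\fM$ is a $C^\infty$ matchbox manifold.
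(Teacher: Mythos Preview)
Your construction of $\fM$ as $\overline{[M,f,X,\phi]}$ in an augmented space $\widehat{\YY}_*^n$ is a natural idea, and everything up through ``$\pi$ restricts to a diffeomorphism on each leaf'' and ``$\fM$ is without holonomy'' goes through. The gap is in the last step, where you argue that the local transversals of $\fM$ are totally disconnected.

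Your argument identifies a transverse slice with a subset of an inverse limit of finite sets, the finite sets being the possible ``combinatorial adjacency patterns of $(X',\phi')$ on balls of radius $R$''. For this identification to be an \emph{embedding} you need the pattern data to separate points of the slice, and you invoke limit aperiodicity for that. But limit aperiodicity of $(M,f,X,\phi)$ only says that each limit object has no nontrivial self-isometry preserving all the data; it does not say that two distinct limit objects must differ in their colored-graph pattern. Concretely: take a connected piece $C$ of a local transversal of $\fX$ (which exists whenever $\fX$ is not already a matchbox manifold). Along $C$ the data $(M'_c,f'_c)$ varies continuously, and in $\fM$ the Delone sets $X'_c$ vary continuously in the Chabauty sense---the Delone points move continuously and the colors stay fixed. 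By the $\sigma\pm\rho$ gap condition (carried to limits as in \Cref{cl: [X' phi'] subset overline [X phi]}), the adjacency graph of $X'_c$ is locally constant in $c$; hence the combinatorial pattern is \emph{constant} on $C$ at every scale $R$. Your map to the inverse limit therefore collapses $C$ to a point, so it is not injective and gives no information about total disconnectedness. In short, adding the colored Delone set contributes discrete data sitting \emph{over} the transversal of $\fX$, but it does not discretize the transversal direction of $\fX$ itself.

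The paper takes a different route that addresses exactly this point. Instead of adding combinatorial data independent of the transversal, it manufactures a second function $f_2$ that \emph{encodes the transversal position} of $\fX$ into a Cantor space. One fixes a regular atlas with transversal $\widetilde\fT$, a countable base $\{V_k\}$ of $\widetilde\fT$ adapted to a shrinking mesh, and defines $\psi:\widetilde\fT\to\{0,1\}^{\N}$ by the characteristic sequence of the $V_k$'s; then $\Psi:\widetilde\fT\to (\{0,1\}^{\N})^{\II}$ records $\psi$ along all holonomy words $I$. The key fact (Claim in the proof) is that $\psi$ admits a one-sided inverse $\varpi$ on the relevant closure, so convergence of $\Psi$-values forces convergence in $\widetilde\fT$. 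One then sets $f_2^i=\lambda_i\cdot\sigma\Psi\tilde p_i$ with bump functions $\lambda_i$ and an embedding $\sigma$ of the Cantor space into $\R$, takes $f=(f_1,f_2)$ with $f_1=h|_M$, and $\fM=\overline{[M,f]}\subset\widehat{\MM}_{*,\text{imm}}^n$. Now the complete transversal $\fT'$ of $\fM$ satisfies $\ev:\overline{\fT'}\hookrightarrow\fH$ with image inside $f_1(\overline{\fT})\times(\sigma(K^{\II}))^c$, and a short computation using $\varpi$ shows $\overline{\fT'}$ is homeomorphic to a closed subset of $K^{\II}$, hence zero-dimensional. Repetitivity of $(M,f)$ in the minimal case is read off from Reeb local stability on $\fX$. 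The essential difference from your proposal is that $f_2$ is built \emph{from} the transversal of $\fX$, so that two transversally distinct points of $\fX$ already acquire distinct $f_2$-values, rather than hoping an externally chosen Delone decoration will do this.
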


\begin{proof}
Fix any dense leaf $M$ of $\fX$, an auxiliary Riemannian metric on $\fX$, and a $C^\infty$ embedding into some separable Hilbert space, $h:\fX\to\fH_1$. Let $f_1=h|_M$ and $\fM_1=\overline{[M,f_1]}$ in $\widehat\MM_*^n(\fH_1)$ ($n=\dim M$). Then $(M,f_1)$ is limit aperiodic, $\fM_1$ is compact, and we have an induced isometric diffeomorphism between Riemannian foliated spaces, $\hat\iota_{\fX,h}:\fX\to\fM_1$ (\Cref{ex: fX' = overline [M f] with f = h|_M}).

There are regular foliated atlases $\UU=\{U_i,\phi_i\}$ and $\widetilde\UU=\{\widetilde U_i,\tilde \phi_i\}$ of $\fX$ ($i=1,\dots,c$), with foliated charts $\phi_i:U_i\to B_i\times\fT_i$ and $\tilde\phi_i:\widetilde U_i\to\widetilde B_i\times\widetilde\fT_i$, such that $\overline{U_i}\subset\widetilde U_i$ and $\phi_i=\tilde\phi_i|_{U_i}$. 
Thus $\overline B_i\subset\widetilde B_i$ in $\R^n$ ($n=\dim\fX$), and every $\fT_i$ is a relatively compact subspace of $\widetilde\fT_i$. 
Moreover the projections $\tilde p_i=\pr_2\tilde\phi_i:\widetilde U_i\to\widetilde\fT_i$ extend the projections $p_i=\pr_2\phi_i:U_i\to\fT_i$, and the elementary holonomy transformations $\tilde h_{ij}:\tilde p_i(\widetilde U_i\cap\widetilde U_j)\to\tilde p_j(\widetilde U_i\cap\widetilde U_j)$ defined by $\widetilde\UU$ extend the elementary holonomy transformations $h_{ij}:p_i(U_i\cap U_j)\to p_j(U_i\cap U_j)$ defined by $\UU$. 
Let $\II$ denote the set of all finite sequences of indices in $\{1,\dots,c\}$ of length $\geq 0$; in particular, $\II$ contains the empty sequence, denoted by $\epsilon$. 
For every $I=(i_0,i_1,\dots,i_k)\in\II$, let $\tilde h_I=\tilde h_{i_{k-1}i_k}\cdots\tilde h_{i_1i_0}$ and $h_I=h_{i_{k-1}i_k}\cdots h_{i_1i_0}$, which may be empty maps; if the length of $I$ is less than $2$, then we set $\tilde h_I$ to be the identity. 
There are points $y_i\in B_i$ such that the local transversals $\tilde\phi_i^{-1}(\{y_i\}\times\widetilde\fT_i)\equiv\widetilde\fT_i$ have disjoint closures in $\fX$, and therefore we can realize $\widetilde\fT:=\bigsqcup_i\widetilde\fT_i$ as a complete transversal in $\fX$ (\Cref{ss: fol sps}). 
Hence $\phi_i^{-1}(\{y_i\}\times\fT_i)\equiv\fT_i$ and $\fT:=\bigsqcup_i\fT_i$ also have these properties.

Since $\fX$ is Polish and compact, it is locally compact and second countable, and therefore $\widetilde\fT$ is also locally compact and second countable. Then there is a countable base of relatively compact open subsets $V_k$ ($k\in\N$) of $\widetilde\fT$. Fix any relatively compact open subset $\fS_i$ of every $\widetilde\fT_i$ containing $\overline{\fT_i}$, and let $\fS=\bigsqcup_i\fS_i$. Given a metric on $\widetilde\fT$ inducing its topology, we can suppose that there is a sequence $0=k_0<k_1<\cdots$ in $\N$ such that the sets $V_{k_m},\dots,V_{k_{m+1}-1}$ cover $\overline{\fS}$ and have diameter $<1/(m+1)$ for all $m\in\N$. Using $K=\{0,1\}^\N$ as a model of the Cantor space, let $\psi:\widetilde\fT\to K$ be defined by
\[
\psi(x)(k)=
\begin{cases}
0 & \text{if $x\notin V_k$} \\
1 & \text{if $x\in V_k$}\;.
\end{cases}
\]
Since $\II$ is countable, $K^\II$ is homeomorphic to $K$. Let $\Psi:\widetilde\fT\to K^\II$ be the map defined by
\[
\Psi(x)(I)=
\begin{cases}
\psi\tilde h_I(x) & \text{if $x\in\dom\tilde h_I$} \\
0 & \text{if $x\notin\dom\tilde h_I$}\;,
\end{cases}
\]
where $0\equiv(0,0,\dots)\in K$.

\begin{rem}\label{r:equivariant1}
Let $I = (i_0,\ldots, i_k)$ and $J=(j_0,\ldots,j_l)$ satisfy $i_k=j_0$, and let $x\in\dom\tilde{h}_I$.
Then $\tilde{h}_I(x)\in\dom\tilde{h}_J $ if and only if $x\in\dom\tilde{h}_{I\cdot J}$, in which case $\tilde{h}_{I\cdot J}(x) = \tilde{h}_J(\tilde{h}_{I}(x))$, and the definition of $\Psi$ yields $\Psi(\tilde{h}_I(x))(J)=\Psi(x)(I\cdot J)$.
\end{rem}

\begin{claim}\label{cl: x_a is convergent in widetilde fT}
For any sequence $x_a$ in $\fS$, if $\psi(x_a)$ is convergent in $K$, then $x_a$ is convergent in $\widetilde\fT$, and $\lim_ax_a$ depends only on $\lim_a\psi(x_a)$.
\end{claim}

The convergence of $\psi(x_a)$ in $K$ means that, for every $m\in\N$, there is some $a_m\in\N$ such that $\psi(x_a)(k)=\psi(x_b)(k)$ for all $k<k_{m+1}$ and $a,b\ge a_m$. Since the sets $V_{k_m},\dots,V_{k_{m+1}-1}$ cover $\fS$, it follows that there is a sequence $l_m\in\N$ such that $k_m\le l_m<k_{m+1}$ and $x_a\in V_{l_m}$ for all $a\ge a_m$. Thus the limit set $\bigcap_m\overline{\{\,x_a\mid a\ge a_m\,\}}$  is a nonempty subset of $\bigcap_m\overline{V_{l_m}}$, which consists of a unique point of $\fS$ because every $\overline{V_{l_m}}$ is compact with diameter $<1/(m+1)$. Thus $x_a$ is convergent in $\widetilde\fT$.

Now let $y_a$ be another sequence in $\fS$ such that $\psi(y_a)$ is convergent in $K$ and $\lim_a\psi(y_a)=\lim_a\psi(x_a)$. We have already proved that $y_a$ is convergent in $\widetilde\fT$. Moreover, taking $a_m$ large enough in the above argument, we also get $\psi(y_a)(k)=\psi(x_a)(k)$ for all $k<k_{m+1}$ and $a\ge a_m$. This yields $y_a\in V_{l_m}$ for all $a\ge a_m$, and therefore $\lim_ay_a=\lim_ax_a$. This completes the proof of \Cref{cl: x_a is convergent in widetilde fT}.

\begin{claim}\label{cl: inverse function}
There is a continuous map $\varpi:\overline{\psi(\fS)}\to\overline{\fS}$  defined by  
\begin{equation}\label{varpi(xi) = x}
\{x\}=\bigcap_{k\in\xi^{-1}(1)}\overline{V_k}\;\Longrightarrow\;\varpi(\xi)=x\;,
\end{equation}
and we have $\varpi\psi=\id$ on $\fS$
\end{claim}

Let $\xi\in\overline{\psi(\fS)}$. Then $\xi$ is the limit of some sequence $\psi(x_n)$ for $x_n\in \fS$. By Claim~\ref{cl: x_a is convergent in widetilde fT}, $x_n$ converges to some point $x$ in $\overline\fS$ and, moreover, if we take any other sequence $y_n$ such that $\lim \psi(y_n)=\xi$, then $\lim y_n=x$. This assignment $\xi\mapsto x$  defines a function $\varpi:\overline{\psi(\fS)}\to\overline{\fS}$, which satisfies~\eqref{varpi(xi) = x} by the definition of the  map $\psi$. 

To show that $\varpi\psi=\id$ on $\fS$, take any $x\in \fS$ and let $\xi=\psi(x)$. Taking the constant sequence $x_n=x$, we see that $\lim x_n=x$ and $\lim\psi(x_n)=\xi$. Therefore $\varpi(\xi)=x$, as desired.

\begin{rem}\label{r:equivariant2}
Every map $\tilde{h}_I$, $I=(i_0,\ldots, i_k)$, induces a local homeomorphism in  $\overline{\phi(\mathfrak S)}$, which for simplicity we will denote again by $\tilde{h}_I$. Take a sequence $x_a$ in $\mathfrak S$ such that $\psi(x_a)$ is convergent in $\overline{\phi(\mathfrak S)}$. By Claim~\ref{cl: x_a is convergent in widetilde fT}, $x_a$ is convergent in $\overline{\mathfrak S}$; assume that $\lim x_a\in \widetilde{\mathcal T}_{i_0}$, this clearly determines an open subset of $\overline{\phi(\mathfrak S)}$, which will be $\dom \tilde{h}_I$. Then we define $\tilde{h}_I(\lim \psi(x_a))=\lim \psi(\tilde{h}_I (x_a))$; it is clear with this definition that $\varpi(\tilde{h}_I(\xi))=\tilde{h}_I(\varpi(\xi))$.

Similarly, for elements in $\overline{\im \Psi(\mathfrak{S})}$, we can define $\tilde{h}_I(\lim \Psi(x_a)) = \lim \Psi(\tilde{h}_I(x_a))$ whenever $x_a\in \dom \tilde{h}_I$. Consider the map $\Pi\colon \overline{\im \Psi(\mathfrak{S})}\to \overline{\mathfrak{S}}$ given by $\Pi(\alpha) = \varpi(\alpha(\epsilon))$. Recall that $\alpha$ is a map $\mathcal{I}\to K$ and $\epsilon\in\mathcal{I}$ is the empty sequence. By the previous claim and the definition of $\Psi$, $\Pi$ is a continuous inverse to $\Psi$ which satisfies $\Pi(\tilde{h}_I(\alpha)) = \tilde{h}_I(\Pi(\alpha))$.
\end{rem}

At this point, we only need to show that $\varpi$ is continuous. Let $\xi_n$ be a sequence in $\overline{\psi(\fS)}$ converging to $\xi$, and let $\varpi(\xi)=x$. Choose some open set $U$ containing $x$; since the basic open sets $V_i$ cover $\overline\fS$, there is some $V_j$ such that $x\in V_j\subset U$. Since $\xi_n$ converges to $\xi$, we have $\xi_n(j)=1$ for $n$ large enough, and therefore $\varpi(\xi_n)\in U$ for $n$ large enough, as desired. This completes the proof of Claim~\ref{cl: inverse function}.
 
Let $X_i=\fT_i\cap M$ and $X=\bigcup_iX_i=\fT\cap M$, which is a Delone set in $M$ (see e.g.\ \cite[Proposition~10.5]{AlvarezCandel2018}). For every $i$, let $\lambda_i:\fX\to[0,1]$ be a $C^\infty$ function with $\lambda_i=1$ on $U_i$ and $\lambda_i=0$ on some neighborhood of $\fX\setminus\widetilde U_i$ containing $\widetilde\fT\setminus\overline{\fS_i}$. Fix an embedding $\sigma:K^\II\to\R$, and let $f_2=(f_2^1,\dots,f_2^c):M\to\R^c=:\fH_2$, where
\[
f_2^i(x)=
\begin{cases}
\lambda_i(x)\cdot\sigma\Psi\tilde p_i(x) & \text{if $x\in M\cap\widetilde U_i$}\\
0 & \text{if $x\in M\setminus\widetilde U_i$}\;.
\end{cases}
\]
We have $\sup_M|\nabla^mf_2|=\max_i\sup_\fX|\nabla^m\lambda_i|<\infty$ for all $m\in\N$. So $\fM_2:=\overline{[M,f_2]}$ is compact by \Cref{c: overline [M f] is compact}.

Consider the $C^\infty$ function $f=(f_1,f_2):M\to\fH:=\fH_1\oplus\fH_2$, and $\fM=\overline{[M,f]}$ in $\widehat\MM_*^n(\fH)$. Since $\fM_1$ and $\fM_2$ are compact, we get that $\fM$ is also compact by \Cref{c: overline [M f]  is compact <=> overline [M f_1] and overline [M f_2] are compact}. We have $\inf_M|\nabla f|\ge\inf_M|\nabla f_1|=\inf_{\fX}|\nabla\tilde h|>0$, and therefore $\fM\subset\widehat\MM_{*,\text{\rm imm}}^n(\fH)$ by \Cref{p: overline [M f] subset widehat MM_* imm^infty(n)}~\ref{i: inf_M | nabla^m f | > 0 => overline [M f] subset widehat MM_* imm^infty(n)}. The pair $(M,f)$ is limit aperiodic because $(M,f_1)$ is limit aperiodic, and therefore $\fM$ has no holonomy (\Cref{ss: MM_*^n and widehat MM_*^n}).

For $a=1,2$, let $\Pi_a:\fH\to\fH_a$ denote the corresponding factor projection. 

\begin{claim}\label{cl: Pi_1*}
$\Pi_{1*}:\fM\to\fM_1$ is a surjective $C^\infty$ foliated map restricting to isometries between the leaves.
\end{claim}

This map is foliated because $\Pi_{1*}:\widehat\MM_*^n(\fH)\to\widehat\MM_*^n(\fH_1)$ is relation-preserving (\Cref{ss: MM_*^n and widehat MM_*^n}). Moreover it is $C^\infty$, which follows from the description of the $C^\infty$ foliated structure of $\widehat\MM_{*,\text{\rm imm}}^n(\fH)$ and $\widehat\MM_{*,\text{\rm imm}}^n(\fH_1)$ given in \cite[Section~5]{AlvarezBarral2017}. 

We have $\Pi_{1*}\equiv\id:[M,f]\equiv M\to[M,f_1]\equiv M$ by the aperiodicity of $(M,f)$ and $(M,f_1)$. So $\Pi_{1*}:\fM\to\fM_1$ is surjective since $[M,f]$ and $[M,f_1]$ are dense in the respective compact spaces $\fM$ and $\fM_1$. 

Obviously, the restrictions of $\Pi_{1*}:\fM\to\fM_1$ to the leaves are local isometries. Then they are also covering maps because the leaves are of bounded geometry by the compactness of $\fM$ and $\fM_1$. But we have seen that its restriction to dense leaves, $\Pi_{1*}:[M,f]\to[M,f_1]$, is a diffeomorphism, and $\fM$ and $ \fM_1$ have no holonomy. Then, using the Reeb's local stability theorem, it easily follows that $\Pi_{1*}:\fM\to\fM_1$ restricts to diffeomorphisms between the leaves. This completes the proof of \Cref{cl: Pi_1*}.

By \Cref{cl: Pi_1*}, the map $\pi:=(\hat\iota_{\fX,h_1})^{-1}\Pi_{1*}:\fM\to\fX$ is also a surjective $C^\infty$ foliated map restricting to isometries between the leaves. Thus every leaf of $\fM$ is of the form $[M',f']$, where $M'$ is a leaf of $\fX$ and $f'=(f'_1,f'_2):M'\to\fH$, where $f'_1=h|_{M'}$ and $[M',f'_2]\subset\fM_2$.

Let $p'_i:U'_i:=\pi^{-1}(U_i)\to\fT'_i:=\pi^{-1}(\fT_i)$ be defined by $p'_i([M',x',f'])=[M',p_i(x'),f']$, for leaves $M'$ of $\fX$, and let $\phi'_i=(\pr_1\phi_i\pi,p'_i):U'_i\to B_i\times\fT'_i$, where $\pr_1:B_i\times\fT_i\to B_i$ is the first factor projection. Using the description of the $C^\infty$ foliated structure of $\widehat\MM_{*,\text{\rm imm}}^n(\fH)$ given in \cite[Section~5]{AlvarezBarral2017}, it is easy to check that $\{U'_i,\phi'_i\}$ is a $C^\infty$ foliated atlas of $\fM$. Thus $\fT'=\bigcup_i\fT'_i\equiv\bigsqcup_i\fT'_i$ is a complete transversal of $\fM$.

\begin{claim}\label{cl: ev: overline fT' to fH is an embedding}
The map $\ev:\overline{\fT'}\to\fH$ is an embedding whose image is $\overline{f(X)}$.
\end{claim}

Since $\ev:\overline{\fT'}\to\fH$ is a continuous map defined on a compact space, and $\{\,[M,x,f]\mid x\in X\,\}$ is dense in $\overline{\fT'}$, it is enough to prove that $\ev:\overline{\fT'}\to\fH$ is injective. Let $[M',x',f'],[M'',x'',f'']\in\overline{\fT'}$ with $f'(x')=f''(x'')$. We can assume that $M'$ and $M''$ are leaves of $\fX$, $x'\in M'\cap\overline{\fT}$, $x''\in M''\cap\overline{\fT}$, $f'=(f'_1,f'_2)$ with $f'_1=h|_{M'}$, and $f''=(f''_1,f''_2)$ with $f''_1=h|_{M''}$. Then $h(x')=h(x'')$, yielding $x'=x''$ and $M'=M''$. On the other hand, there are sequences  $x'_m$ and $x''_m$ in $M\cap\fT$ converging to $x'$ in $\overline\fT$ such that $(M,x'_m,f_2)$ and $(M,x''_m,f_2)$ are $C^\infty$-convergent to $(M',x',f'_2)$ and $(M',x',f''_2)$, respectively. If $x'\in\overline{\fT_i}$, we can assume that $x'_m,x''_m\in M\cap\fT_i$ for all $m$. Writing $f'_2=(f^{\prime1}_2,\dots,f^{\prime c}_2)$ and $f''_2=(f^{\prime\prime1}_2,\dots,f^{\prime\prime c}_2)$, we get
\[
\lim_m\sigma\Psi(x'_m)=f^{\prime i}(x')=f^{\prime\prime i}(x')=\lim_m\sigma\Psi(x''_m)\;.
\]
So $\lim_m\Psi(x'_m)=\lim_m\Psi(x''_m)$, yielding $\lim_m\Psi h_I(x'_m)=\lim_m\Psi h_I(x''_m)$ for all $I\in\II$. Since $h_I(x'_m)$ and $h_I(x''_m)$ converge to $\tilde h_I(x')$ in $\overline{\fT}$, using the Reeb's local stability theorem and the definition of $f_2$, it follows that both $(M,x'_m,f_2)$ and $(M,x''_m,f_2)$ are $C^\infty$-convergent to the same triple with first components $(M',x')$. Therefore $f'_2=f''_2$, yielding $[M',x',f']=[M'',x'',f'']$, as desired.

According to \Cref{cl: ev: overline fT' to fH is an embedding}, $\overline{\fT'}$ is homeomorphic to the subspace
\[
\overline{f(X)}=\overline{\{\,(f_1(x),f_2(x))\mid x\in X\,\}}\subset f_1(\overline{\fT})\times(\sigma(K^\II))^c\;.
\]
By the conditions on the functions $\lambda_i$, this subspace is homeomorphic to the subspace
\begin{align*}
\bigsqcup_i\overline{\{\,(x,\Psi(x))\mid x\in X_i\,\}}&=\bigsqcup_i\overline{\{\,(\varpi(\xi),\xi)\mid\xi\in\Psi(X_i)\,\}}\\
&=\bigsqcup_i\big\{\,(\varpi(\xi),\xi)\mid\xi\in\overline{\Psi(X_i)}\,\big\}\subset\bigsqcup_i\overline{\fT_i}\times K^\II\equiv\overline{\fT}\times K^\II\;,
\end{align*}
which in turn is homeomorphic to the subspace $\bigcup_i\overline{\Psi(X_i)}\subset K^\II$ because $\varpi$ is continuous. So $\overline{\fT'}$ and $\fT'$ are zero-dimensional, obtaining that $\fM$ is a matchbox manifold.

Now suppose that $\fX$ is minimal. Then $(M,f_1)$ is repetitive (\Cref{ex: fX' = overline [M f] with f = h|_M}). A simple refinement of the proof of \Cref{p: w/t hol => repetitive} also shows that $(M,f_2)$ is repetitive. In both cases, this property can be described with the same partial pointed quasi-isometries given by the Reeb's local stability theorem. So $(M,f)$ is also repetitive, and therefore $\fM$ is minimal by \Cref{p: M is repetitive <=> overline im hat iota_M f is minimal}~\ref{i: (M f) is repetitive => overline im hat iota_M f is minimal}.
\end{proof}

\Cref{t: realization in matchbox mfds w/t hol} now follows from \Cref{t: mathfrak X,t: matchbox mfd}.

\begin{rem}
As mentioned in \Cref{s: intro}, \Cref{t: matchbox mfd} is an extension to foliated spaces of a theorem proved by Anderson for flows \cite[Theorem~IIIB]{Anderson}. In fact, both constructions are almost exactly the same. Suppose, for the sake of clarifying our construction, that we are dealing with a $\mathbb{Z}$-action instead of a pseudogroup, so we have a doubly-infinite sequence of iterates of some homeomorphism $g\colon X\to X$ for some Polish space $X$. Our construction consists of taking a countable basis $\{V_k\mid k\in \mathbb{N}\}$ and defining the coding map $\psi\colon X\to 2^{\mathbb{N}}$ by
\[
\psi(x)=\begin{cases} 0 \quad \text{if} \ x\notin V_k\\ 1 \quad \text{if} \ x \in V_k\;.\end{cases}
\]
Then we construct $\Psi\colon X \to (2^\mathbb{N})^\mathbb{Z}\equiv 2^{\mathbb{N}\times \mathbb{Z}}$, which codifies entire orbits, by
\[
\Psi(x)(z) =\psi g^z(x)\;.
\]
(In this case, it is not needed to discriminate whether $x\in \dom g^z$ since $\dom g^z=X$.) Finally, we detail in Remark~\ref{r:equivariant2} why $\Psi$ has a continuous inverse $\Pi \colon \overline{\im \Psi} \to X$ that is equivariant with respect to the $\mathbb{Z}$-action on $\overline{\im \Psi}$ given by
\[
g^z(\lim \Psi(x_a))=\lim \Psi(g^z(x_a)). 
\]
But, in the case of a $\mathbb{Z}$-action, this is just the usual shift map $g^z(\alpha)(n,z')=\alpha(n,z'-z)$.
Note that  all of this holds even though  the coding map is not continuous.

Anderson partitions $\mathbb{Z}$ into a disjoint union of doubly infinite subsequences, denoted $n_i$, and now the shift map, instead of moving all elements on $\mathbb{Z}$ one step to the right, moves each element to its successor in the subsequence which contains it. This is the same as considering $\mathbb{Z}\times \mathbb{N}$ and taking the shift map to be $(z,n)\mapsto (z+1,n)$, where now each horizontal copy of $\mathbb{Z}$ corresponds to one of the $n_i$. He takes a family of sets that are the closures of a basis, which we may denote here by $\{\overline{V}_k\}$. The desired zero-dimensional space is then the set of pairs $(x,p)$ in $X \times 2^{\mathbb{N}\times \mathbb{Z}}$ that do not satisfy either of these conditions:
\begin{enumerate}
    \item for some $n$ and $z$, $x\notin g^z(\overline{V}_n)$ and $p(n,z)=1$;
    \item for some $n$ and $z$, $x\in \Cl( X \setminus g^z(\overline{V}_n)) $ and $p(n,z)=1$.
\end{enumerate}
This is easily seen to be equivalent to the set of pairs $(x,p)$ satisfying the following two conditions:
\begin{enumerate}
    \item for all $n$ and $z$, if $x\in g^z(V_n)$ then $p(n,z)=1$; and,
    \item for all $n$ and $z$, if $x\in X \setminus g^z(\overline{V}_n) $ then $p(n,z)=0$.
\end{enumerate}
It is elementary to check now that our map $\Psi$ is such that $(x,\Psi(x))$ satisfies these conditions, and therefore the map $\overline{\im \Psi}\to X \times 2^{\mathbb{N}\times \mathbb{Z}}$, given by $c\mapsto (\varpi(c),c)$, is an embedding. So, at last, we see that our zero-dimensional extension is a closed, saturated subspace of Anderson's.
\end{rem}

\begin{rem}
The argument of the proof of \Cref{t: matchbox mfd} does not entail any contradiction in our constructions because we do not claim that $\psi$ is a surjective function. We merely claim that it has a left inverse. For example, consider the map $2^{\mathbb{N}}\to [0,1]$ that sends a sequence $x_0x_1\cdots$ to the real number with dyadic expansion $0.x_1x_2\cdots$; this would correspond to our map $\varpi$. Indeed, there is no possible surjective section $\psi$, but this does not contradict the fact that $\varpi$ is continuous and well-defined. Moreover, this does not contradict the fact that there are several choices for non-injective, non-continuous sections $\psi$, that would correspond to different codings; for example, one may take the convention that a number with two possible dyadic expansions gets sent to the expansion ending in all zeros. Furthermore, this section $\psi$ satisfies the property analogous to that of Claim~\ref{cl: x_a is convergent in widetilde fT}: if we take a sequence of numbers $r_n$ and their dyadic expansions (given by $\psi(r_n)$) converge, then the sequence $r_n$ converges as well, and their limit depends only on the limit of the dyadic expansions.

The case of the Denjoy construction is similar: there is no surjective section from the circle to the Cantor set, but one may easily adopt some convention (for example: for the points that get split in two, associate it to the copy that gets moved in the counter-clockwise direction) and find a section with similar properties as before. Since coding maps cannot be continuous, one has great flexibility when constructing them.

Regarding our construction, there is one fundamental difference with the last two: we are using open sets for our coding functions, so we don't need any conventions or choices to define our map $\psi$. Taking a suitable sequence of open sets $V_k$ as explained right before Claim~\ref{cl: x_a is convergent in widetilde fT}, we simply define the coding function by $\psi(x)(k)=1$ if $x\in V_k$ or $\psi(x)(k)=1$ if not. However, we may still find a continuous projection from a Cantor set to our transversal $\mathfrak{T}$ which is a left inverse to $\psi$, and that is precisely the contents of Claims~\ref{cl: x_a is convergent in widetilde fT} and~\ref{cl: inverse function}.
\end{rem}

\section{Attaching flat bundles to foliated spaces}\label{s: attaching flat bundles}

In this section we will prove Theorems~\ref{t: realization in matchbox mfds w/t hol with a Cantor transversal} and~\ref{t: realization with holonomy}. The main idea can be explained simply as follows: consider a circle, denoted $S$, as a trivial foliated space with one leaf only, and suppose we want to modify this foliated space so that the holonomy covering of this leaf is $\mathbb{R}$. Consider the universal covering $\mathbb{R}\to\mathbb{S}$, and add $S$ to this bundle as a circle ``at infinity'', so that we obtain a space $X$ which is the union of $S$ and $\mathbb{R}$ with $\mathbb{R}$ accumulating on $S$ on both ends. Such a foliated space can be described as the closure of a non-compact leaf in the non-orientable Reeb component on the M\"obius band. It is easy to check that, in this enlarged space, the holonomy covering of $S$ is $\mathbb{R}$.

Our strategy will be a generalization of this simple idea: Take a manifold $M$ to realize as a leaf with holonomy cover $\widetilde M$. Using the results from the previous sections, we can realize $M$ as a leaf in a matchbox manifold without holonomy. Then, we will see that choosing a suitable bundle $E\to M$, adding a copy of $M$ at infinity, and gluing this copy at infinity and the leaf $M$ inside our matchbox manifold, we obtain a larger matchbox manifold that still contains $M$, but now with the desired holonomy cover.

In the following, we will develop the technical tools to make this construction work: Let $\fX\equiv(\fX,\FF)$ be a compact $C^\infty$ foliated space of dimension $n$, and let $M$ be a leaf of $\fX$. On the other hand, let $\rho:E\to M$ be a locally compact flat bundle with typical fiber $F$ and horizontal foliated structure $\HH$.  This can be described as the suspension of its holonomy homomorphism $h:\pi_1M\to\Homeo(F)$, whose image is its holonomy group $G$; they are well defined up to conjugation in $\Homeo(F)$. Any foliated concept of $E$ refers to $\HH$. The $C^\infty$ differentiable structure of $M$ induces a $C^\infty$ differentiable structure of $\HH$. Assume that $F$ is a non-compact, zero dimensional locally compact Polish space; then $E$ is also non-compact, locally compact and Polish. The notation $E_x=\rho^{-1}(x)$ and $E_X=\rho^{-1}(X)$ will be used for $x\in M$ and $X\subset M$. 

The one-point compactifications $E_x^+=\{x\}\sqcup E_x$ of the fibers $E_x$ ($x\in M$) are the fibers of another $C^\infty$ flat bundle $\rho^+:E^+\to M$; thus $E^+\equiv M\sqcup E$ as sets. Its typical fiber is the one-point compactification $F^+=\{\infty\}\cup F$ of $F$, the leaves of its horizontal foliation $\HH^+$ are $M$ and the leaves of $\HH$, its holonomy homomorphism $h^+:\pi_1M\to\Homeo(F^+)$ is induced by $h$, and its holonomy group is denoted by $G^+$. The more specific notation $h_x:\pi_1(M,x)\to\Homeo(F)$, $h^+_x:\pi_1(M,x)\to\Homeo(F^+)$, $G_x$ and $G^+_x$ will be used to indicate the base point $x$.

Let us topologize $\fX'=\fX\sqcup E\cong \fX\cup_{\id_M}E^+$ as follows: Take any foliated chart $U\equiv B\times\fT$ of $\fX$ with ball $B\subset\R^n$ and local transversal $\fT$. We have $M\cap U\equiv B\times D$ for some countable subset $D\subset\fT$. Since the plaques of $U$ are contractible, $\rho$ has a local trivialization $E_{M\cap U}\equiv(M\cap U)\times F$. Consider the topology on $\fT'=\fT\sqcup(D\times F)$ with basic clopen sets of the forms
\[
\mathfrak V=\emptyset\sqcup\{d\}\times V_d\equiv\{d\}\times V_d\;,\quad
\mathfrak W=W_{\fT} \sqcup\bigcup_{z}(\{z\}\times (F\setminus K_z))\;,
\]
where $d\in D$, $V_d\subset F$ is clopen, $W_{\fT}\subset \fT$ is clopen,  $z\in W_{\fT} \cap D$, and $K_z\subset F$ is compact and open with $K_z=\emptyset$ for almost all $z$. 
Then $\fX'$ has a topology with basic open sets of the form 
\[
V\equiv\emptyset\sqcup(B\times\{d\}\times V_d)\equiv B\times\mathfrak V\;,\quad
W\equiv B\times\Big(W_{\fT}\sqcup\bigcup_z(\{z\}\times (F\setminus K_z))\Big)\equiv B\times\mathfrak W\;,
\]
for all possible foliated charts $U\equiv B\times\fT$ of $\fX$, and all $d$, $V_d$, $W_{\fT}$, $z$ and $K_z$ as above for every foliated chart. If the foliated atlas is a base of open sets of $\fX$, then it is enough to take $W_{\fT}=\fT$ to obtain a base of open sets of $\fX'$; in this case, the sets $W$ would be of the form
\[
W\equiv U\sqcup\Big(B\times\bigcup_z(\{z\}\times (F\setminus K_z))\Big)\;,
\]
Using these basic open sets, it is easy to check that $\fX'$ is Hausdorff, second countable and compact, and $\fT'$ is in addition zero-dimensional. So $\fX'$ is metrizable \cite[Proposition~4.6]{Kechris1995}, and hence Polish. In particular, the sets
\[
U'=U\sqcup E_{M\cap U}\equiv(B\times\fT)\sqcup(B\times D\times F)=B\times\fT'
\]
are open in $\fX'$, and the fibers $B\times\{*\}$ correspond to open subsets of leaves of $\FF$ or $\HH$. Thus these identities are foliated charts of a foliated structure $\FF'$ on $\fX'$, and its leaves are the leaves of $\FF$ and $\HH$. As sets, we can write $\fX'\equiv\fX\cup_{\id_M}E^+$ and $\fT'\equiv\fT\cup_{\id_D}(D\times F^+)$, where we consider $D\equiv D\times\{\infty\}\subset D\times F^+$; we can also write $\fT'=\fT\sqcup E_D\equiv\fT\cup_{\id_D}E^+_D$. Since $\fT'$ is zero-dimensional, we have constructed a matchbox manifold.

Consider a regular foliated atlas of $\fX$ consisting of charts $U_i\equiv B_i\times\fT_i$, for balls $B_i\subset\R^n$ and local transversal $\fT_i$. As before, take local trivializations $E_{M\cap U_i}\equiv(M\cap U_i)\times F$ of the flat bundle $\rho$, write $M\cap U_i\equiv B_i\times D_i$ for countable subsets $D_i\subset\fT_i$, and consider the induced foliated charts $U'_i\equiv B_i\times\fT'_i$ of $\FF'$, where $U'_i=U_i\sqcup E_{M\cap U_i}$ and $\fT'_i=\fT_i\sqcup(D_i\times F)$, endowed with Polish topologies. The changes of coordinates of the foliated charts $U_i\equiv B_i\times\fT_i$ are of the form $(y,z)\mapsto(f_{ij}(y,z),h_{ij}(z))$, where every mapping $y\mapsto f_{ij}(y,z)$ is $C^\infty$ with all of its partial derivatives of arbitrary order depending continuously on $z$. Using local trivializations of $E$ and foliated charts of $\FF$, we get  $E_{M\cap U_i}\equiv(M\cap U_i)\times F\equiv B_i\times D_i\times F$. The changes of these local descriptions are of the form $(y,z,u)\mapsto(f_{ij}(y,z),h_{ij}(z),g_{ij}(z,u))$, where the maps $g_{ij}$ are independent of $y$ because $E^+$ is flat. Then the changes of coordinates of the foliated charts $U'_i\equiv B_i\times\fT'_i$ are of the form
\[
(y,z')\mapsto
\begin{cases}
(f_{ij}(y,z'),h_{ij}(z'))\in B_j\times\fT_j & \text{if $z'\in\fT_i$} \\
(f_{ij}(y,z),(h_{ij}(z),g_{ij}(z,u)))\in B_j\times(D_j\times F) & \text{if $z'=(z,u)\in D_i\times F$}\;.
\end{cases}
\]
This map is continuous because $\infty$ is fixed by the unique continuous extension $F^+$ of  the homeomorphism $g_{ij}(z,{\cdot})$ of $F$. Moreover it is $C^\infty$ (in the foliated sense) because only its component $f_{ij}(y,z')$ or $f_{ij}(y,z)$ depends on $y$. Thus the charts $U'_i\equiv B_i\times\fT'_i$ define a $C^\infty$ structure on $\fX'\equiv(\fX',\FF')$. The corresponding elementary holonomy transformations $h'_{ij}$ are combinations of maps $h_{ij}$ and $g_{ij}$. Using these foliated charts, it also follows that $\fX$ and $E$ are embedded $C^\infty$ foliated subspaces of $\fX'$, $E^+$ is an injectively immersed $C^\infty$ foliated subspace of $\fX'$, and the combination $\pi:\fX'\to\fX$ of $\id_{\fX}$ and $\rho$ (or $\rho^+$) is a $C^\infty$ foliated retraction. The fibers of $\pi$ are
\[
\pi^{-1}(x)=
\begin{cases}
\{x\}\sqcup\emptyset\equiv\{x\} & \text{if $x\in\fX\setminus M$} \\
\{x\}\sqcup E_x=E_x^+ & \text{if $x\in M$}\;.
\end{cases}
\]
 
\begin{lem}\label{l: G^+}
Suppose that the restrictions of $\rho$ to the leaves of $\HH$ are regular coverings of the leaves of $\FF$, and that the leaf $M$ of $\FF$ has no holonomy. Then the holonomy group of the leaf $M$ of $\FF'$ is isomorphic to the group of germs at $\infty$ of the elements of the subgroup $G^+\subset\Homeo(F^+)$.
\end{lem}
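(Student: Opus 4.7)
The plan is to compute directly the holonomy representation of $M$, viewed as a leaf of $\FF'$, at a chosen point $x\in M$, by transporting along loops $\gamma\in\pi_1(M,x)$ and inspecting the germ at $x$ of the resulting transverse transformation of $\fT'$. Fix a regular foliated atlas $\{U_i\equiv B_i\times\fT_i\}$ of $\fX$ together with local trivializations $E_{M\cap U_i}\equiv (M\cap U_i)\times F$ of the flat bundle $\rho$, so that the induced foliated atlas $\{U'_i\equiv B_i\times\fT'_i\}$ of $\fX'$ has elementary holonomy transformations of the explicit form
\[
h'_{ij}=h_{ij}\ \text{on}\ \fT_i,\qquad h'_{ij}(z,u)=(h_{ij}(z),g_{ij}(z,u))\ \text{on}\ D_i\times F,
\]
as derived in the excerpt. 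Given $\gamma\in\pi_1(M,x)$, cover it by a chain $U_{i_0},\dots,U_{i_k}=U_{i_0}$ of such charts and compose to obtain a pointed partial homeomorphism $\tilde h_\gamma\colon(\fT'_{i_0},x)\rightarrowtail(\fT'_{i_0},x)$ whose germ at $x$ represents the holonomy of $\gamma$.

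Near $x$, the pushout description $\fT'_{i_0}=\fT_{i_0}\cup_{\id_{D_{i_0}}}(D_{i_0}\times F^+)$, with $D_{i_0}$ discrete around $x$, forces a neighborhood base of $x$ to split, under the identification $x\equiv(x,\infty)$, as the union of a neighborhood of $x$ in $\fT_{i_0}$ and a neighborhood of $\infty$ in $\{x\}\times F^+$. The explicit form of $h'_{ij}$ preserves this decomposition, so the germ of $\tilde h_\gamma$ at $x$ likewise decomposes into two independent pieces. On the $\fT_{i_0}$-piece, $\tilde h_\gamma$ is nothing but the holonomy of $\gamma$ as a loop of the original leaf $M$ of $\FF$, and hence its germ at $x$ is the identity by hypothesis; in particular each $z\in D_{i_0}$ close to $x$ is sent back to itself. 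On the $\{x\}\times F^+$-piece, the composition of the $g_{i_{j+1}i_j}(z,\cdot)$'s along $\gamma$ is, by the definition of $\rho$ from its transition cocycle, exactly $h_x(\gamma)\in G_x\subset\Homeo(F)$; extended by fixing $\infty$ it becomes $h^+_x(\gamma)\in G^+_x$, so the corresponding germ of $\tilde h_\gamma$ at $x$ is the germ of $h^+_x(\gamma)$ at $\infty$.

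Combining these pieces, the assignment $\gamma\mapsto\tilde h_\gamma$ descends to a group homomorphism from the holonomy group of $M$ at $x$ in $\FF'$ to the group of germs at $\infty$ of elements of $G^+_x$, which is surjective because $h^+_x\colon\pi_1(M,x)\to G^+_x$ is, and injective because the $\fT_{i_0}$-piece of the germ is automatically trivial, so that $\tilde h_\gamma$ has trivial germ at $x$ iff $h^+_x(\gamma)$ is the identity on some neighborhood of $\infty$. The main obstacle is to unpack the pushout topology on $\fT'$ carefully enough to justify the clean splitting of germs of transverse transformations at $x\in\fT'$ into a germ on $\fT$ at $x$ and a germ on $F^+$ at $\infty$, and to identify the composition of the transition maps $g_{ij}$ along $\gamma$ with the flat bundle holonomy $h_x(\gamma)$; once these two points are in place, the isomorphism follows directly.
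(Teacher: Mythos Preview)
Your argument has a genuine gap at precisely the point you flag as ``the main obstacle.'' You assert that $D_{i_0}$ is discrete around $x$, and use this to split a neighborhood of $x\equiv(x,\infty)$ in $\fT'_{i_0}$ as a union of a piece in $\fT_{i_0}$ and a piece in $\{x\}\times F^+$. But $D_{i_0}=M\cap\fT_{i_0}$ is the intersection of a leaf with a local transversal, and in the applications of this lemma $M$ is a \emph{dense} leaf; thus $D_{i_0}$ is typically dense in $\fT_{i_0}$, not discrete. Looking at the basic open sets $\mathfrak W$ in the description of the topology on $\fT'$, any neighborhood of $x$ must contain $\{z\}\times S_z$ with $F\setminus S_z$ compact for \emph{every} $z\in D_{i_0}$ in a neighborhood of $x$, not just for $z=x$. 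So the germ of $\tilde h_\gamma$ at $x$ is not determined by its restriction to $\fT_{i_0}$ and to $\{x\}\times F^+$ alone; it also records the action on each nearby fiber $\{z\}\times F^+$ via the holonomy $h^+_z([c_z])$ of the flat bundle along the transported loop $c_z$.

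This is exactly where the regularity hypothesis enters, and your proof never invokes it. For injectivity you need: if $h^+_x(\gamma)$ is the identity near $\infty$, then $\tilde h_\gamma$ is the identity on a full neighborhood of $x$ in $\fT'_{i_0}$, i.e., $h^+_z([c_z])$ is the identity near $\infty$ for all $z\in D_{i_0}$ close to $x$. Since such $z$ can be far from $x$ in the intrinsic metric of $M$, the class $[c_z]\in\pi_1(M,z)$ bears no a priori relation to $[\gamma]\in\pi_1(M,x)$. The paper's proof uses the no-holonomy assumption on $M$ in $\fX$ to construct the continuous family $c_z$, and then uses regularity of the coverings $\rho|_L:L\to M$ to conclude that $h^+_z([c_z])(u)=u$ for $u$ near $\infty$ holds for one $z$ if and only if it holds for all of them; only then does restriction to $\{x\}\times F^+$ yield the claimed isomorphism. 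Without this step, your injectivity argument fails.
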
 

\begin{proof}
With the above notation, fix an index $i_0$ and some point $x_0\in D_{i_0}\equiv\fT_{i_0}\cap M\equiv\fT'_{i_0}\cap M$, considering $\fT_{i_0}\subset\fX$ and $\fT'_{i_0}\subset\fX'$. Let $c:[0,1]\to M$ be a loop based at $x_0$. Since the holonomy group of $M$ in $\fX$ is trivial, there is a family of leafwise loops $c_x:[0,1]\to\fX$, depending continuously on $x$ in some open neighborhood $\fT_0$ of $x_0$ in $\fT_{i_0}$, such that $c_{x_0}=c$. Let $D_0=D_{i_0}\cap\fT_0$. From the above description of the elementary holonomy transformations $h'_{ij}$, it follows that the holonomy of $\FF'$ defined by $[c]\in\pi_1(M,x_0)$ is the germ at $x_0\equiv(x_0,\infty)$ of the homeomorphism $g_c$ of $\fT'_{i_0}=\fT_{i_0}\sqcup(D_{i_0}\times F)$ given by 
\[
g_c(z')=
\begin{cases}
z' & \text{if $z'\in\fT_0$} \\
(x,h_x([c_x])(u)) & \text{if $z'=(x,u)\in D_0\times F$}\;,
\end{cases}
\]
using $[c_x]\in\pi_1(M,x)$. Since the restrictions of $\rho$ to the leaves of $\HH$ are regular coverings of $M$, we easily get that $h^+_x([c_x])(u)=u$ for some $x\in D_0$ and $u\in F^+$ close enough to $\infty$ if and only if $h^+_{x_0}([c])(u)=u$ for $u\in F^+$ close enough to $\infty$. So, by restricting every $g_c$ to $\{x_0\}\times F^+\equiv F^+$, we get an isomorphism from the holonomy group of the leaf $M$ of $\FF'$ at $x_0$ to the group of germs of the elements of $G^+_{x_0}$ at $\infty$.
\end{proof}

\begin{proof}[Proofs of \Cref{t: realization in matchbox mfds w/t hol with a Cantor transversal,t: realization with holonomy}]
Let $M$ be non-compact connected Riemannian manifold of bounded geometry. By \Cref{t: realization in matchbox mfds w/t hol}, $M$ is isometric to a leaf in some Riemannian matchbox manifold $\fM$ without holonomy. Now \Cref{t: realization in matchbox mfds w/t hol with a Cantor transversal,t: realization with holonomy} follow by considering the foliated space $\fM'$ constructed as above with $\fM$ and an appropriate flat bundle $E$ over $M$, and lifting the Riemannian metric of $\fM$ to $\fM'$. 

In the case of \Cref{t: realization in matchbox mfds w/t hol with a Cantor transversal}, we can use the trivial flat bundle $E=M\times K$ over $M$, where $K$ is the Cantor space. By the density of $M$ in $\fM$, it follows that $\fM'$ has a compact zero-dimensional complete transversal $\fT'$ without isolated points, and therefore $\fT'$ is homeomorphic to the Cantor space.

In the case of \Cref{t: realization with holonomy}, let $\Gamma$ denote the group of deck transformations of the given regular covering $\widetilde M$ of $M$, equipped with the discrete topology. If $\Gamma$ is infinite, we can take $E=\widetilde M$, whose typical fiber is $F=\Gamma$. If $\Gamma$ is finite, we can take $E=\widetilde M\times\Z$, whose typical fiber is $F=\Gamma\times\Z$. In any case, $F$ is non-compact, and the action of $\Gamma$ on itself by left translations induces a canonical action of $\Gamma$ on $F$, which in turn induces an action on $F^+$. By \Cref{l: G^+} and the regularity of the covering $\widetilde M$ of $M$, the holonomy group of $M$ in $\fM'$ is isomorphic to the group of germs at $\infty$ of the action of the elements of $\Gamma$ on $F^+$, which is itself isomorphic to $\Gamma$ because the action of $\Gamma$ on $F$ is free and its extension to an action on $F^+$ fixes $\infty$.
\end{proof}

\section*{Acknowledgements}

During this research, the authors were partially supported by the grants FEDER/Ministerio de Ciencia, Innovaci\'on y Universidades/AEI/MTM2017-89686-P, and Xunta de Galicia/ED431C 2019/10. The second author was also supported by a Canon Foundation in Europe Research Grant.



\providecommand{\bysame}{\leavevmode\hbox to3em{\hrulefill}\thinspace}
\providecommand{\MR}{\relax\ifhmode\unskip\space\fi MR }
\providecommand{\MRhref}[2]{%
  \href{http://www.ams.org/mathscinet-getitem?mr=#1}{#2}
}

\end{document}